\newtheorem{theorem}{Theorem}[section]
\newtheorem{proposition}[theorem]{Proposition}
\newtheorem{lemma}[theorem]{Lemma}
\newtheorem{corollary}[theorem]{Corollary}
\theoremstyle{definition}
\newtheorem{example}[theorem]{Example}
\newtheorem{definition}[theorem]{Definition}
\newtheorem{problem}[theorem]{Problem}
\newcommand{\N}{\mathbb{N}}
\newcommand{\Z}{\mathbb{Z}}
\newcommand{\Q}{\mathbb{Q}}
\newcommand{\qc}{\Z(p^\infty)}
\newcommand{\Ker}{{\rm Ker}}
\newcommand{\s}{\sigma}
\def\a{\alpha}
\def\b{\beta}
\def\o{\omega}
\def\s{\sigma}
\def\val#1{\vert #1 \vert}
\begin{document}
	
\author[A.R. Chekhlov]{Andrey R. Chekhlov}
\address{Department of Mathematics and Mechanics, Tomsk State University, 634050 Tomsk, Russia}
\email{cheklov@math.tsu.ru; a.r.che@yandex.ru}
\author[P.V. Danchev]{Peter V. Danchev}
\address{Institute of Mathematics and Informatics, Bulgarian Academy of Sciences, 1113 Sofia, Bulgaria}
\email{danchev@math.bas.bg; pvdanchev@yahoo.com}
\author[B. Goldsmith]{Brendan Goldsmith}
\address{Technological University, Dublin, Dublin 7, Ireland}
\email{brendan.goldsmith@TUDublin.ie; brendangoldsmith49@gmail.com}
\author[P.W. Keef]{Patrick W. Keef}
\address{Department of Mathematics, Whitman College, Walla Walla, WA 99362, USA}
\email{keef@whitman.edu}
	
\title[Relatively and Weakly Hopfian Abelian Groups] {Two Generalizations of Hopfian Abelian Groups}
\keywords{Abelian groups; Bassian groups; Generalized Bassian groups; Hopfian groups; Relatively Hopfian groups; Weakly Hopfian groups}
\subjclass[2010]{20K10, 20K20, 20K21, 20K30}
	
\maketitle
	
\begin{abstract} This paper targets to generalize the notion of Hopfian groups in the commutative case by defining the so-called {\it relatively Hopfian groups} and {\it weakly Hopfian groups}, and establishing some their crucial properties and characterizations. Specifically, we prove that for a reduced Abelian $p$-group $G$ such that $p^{\omega}G$ is Hopfian (in particular, is finite), the notions of relative Hopficity and ordinary Hopficity do coincide. We also show that if $G$ is a reduced Abelian $p$-group such that $p^{\omega}G$ is bounded and $G/p^{\omega}G$ is Hopfian, then $G$ is relatively Hopfian. This allows us to construct a reduced relatively Hopfian Abelian $p$-group $G$ with $p^{\o}G$ an infinite elementary group such that $G$ is {\it not} Hopfian.
	
In contrast, for reduced torsion-free groups, we establish that the relative and ordinary Hopficity are equivalent. Moreover, the mixed case is explored as well, showing that the structure of both relatively and weakly Hopfian groups can be quite complicated.
\end{abstract}
	
\vskip2.0pc
	
\section{Introduction and Basic Definitions}
	
All groups considered in the present work will be Abelian and additively written. Our classical notation and terminology will be in agreement with those from \cite{F1,F} and \cite{K}, respectively, and the new notions will be explicitly defined at appropriate places.

\medskip
	
In an important paper \cite{BP}, Beaumont and Pierce initiated a detailed study of groups that they called {\it ID-groups}, i.e., groups $G$ such that $G$ has an isomorphic proper direct summand. Another class of groups considered somewhat earlier was the class consisting of groups $G$ possessing a non-trivial subgroup $N$ such that $G \cong G/N$. In both cases, \lq most\rq \ infinite groups - in a not too precisely defined sense - have both of these properties. Thus, the study of groups failing to have these properties became of some interest.

\medskip
	
In another vein, in the early 1940s, Baer \cite{B} had begun a study of groups ({\it not} necessarily Abelian) which do {\it not} possess a proper isomorphic quotient. In modern terminology, the groups studied by Baer are called {\it Hopfian groups} - this terminology derives from the work of H. Hopf, who showed that the defining property holds in certain types of manifolds; the negation of the ID-groups is usually referred to as the {\it directly finite groups}, i.e., groups which do {\it not} possess an isomorphic proper direct summand. We will denote these classes by $\mathcal{H}$ and $\mathcal{DF}$, respectively.

\medskip
	
It is straightforward to show that a Hopfian group is necessarily directly finite, so $\mathcal{H} \subseteq \mathcal{DF}$ holds. In fact, the containment is even strict, that is, $\mathcal{H} \subsetneqq \mathcal{DF}$: indeed, since an indecomposable group is certainly directly finite, it suffices to find such a group which is {\it not} Hopfian  -- in this vein, Corner in \cite[Example 1]{C2} constructed such a non-Hopfian group in answer to a question posed about the Hopficity or, otherwise, of a group with automorphism group of order $2$. The directly finite groups are {\it not} the only natural generalizations of Hopficity. The condition that a group possesses a non-trivial subgroup $N$ with $G \cong G/N$ can be strengthened to ask that the subgroup $N$ be {\it pure} in $G$ and the negation of this class leads to a possibly larger class of groups than the Hopfian class as we will demonstrate in the sequel.

\medskip

It is worthwhile noticing that some important results in the general theory of Hopfian groups ({\it not} necessarily commutative) were obtained in \cite{H1} and \cite{H2}.
	
\medskip
	
The present paper is concerned with the following two further variations on these ideas.
	
\begin{definition}\label{newone}
The group $G$ is said to be \textbf{relatively Hopfian} if whenever $\phi: G\to  G\oplus A$ is a surjective homomorphism for some group $A$, then we must have $A=\{0\}$. We denote the class of relatively Hopfian groups by $\mathcal {RH}$.
\end{definition}
	
A quick trick shows that this definition is just equivalent to saying that {\bf $G$ is not isomorphic to a proper direct summand of the factor-group $G/H$ for any non-trivial subgroup $H$ of $G$}.
	
\medskip
	
In fact, if $G$ fails in the above definition, then there is a surjection $\pi: G\to G\oplus A$ with $A\ne \{0\}$. So, there is also a surjection $\gamma:G\to G\oplus A_1\oplus A_2$, where $A_1$ and $A_2$ are copies of $A$. If we set $H:=\gamma^{-1}(A_2) \ne \{0\}$, then one sees that $G/H\cong G\oplus A_1$, so our claim also fails. Reciprocally, if our claim fails, then there is a surjection $G\to G/H\cong G\oplus A$, where $H, A\ne \{0\}$, so our definition also fails.
	
\medskip
	
We, however, emphasize that the approach proposed in Definition~\ref{newone} is easier, since it only involves one other group, $A$, and thus makes it clear that it is {\it not} relevant whether $H\ne \{0\}$.
	
\begin{definition}
The group $G$ is said to be \textbf{weakly Hopfian} if whenever $\pi: G\to G$ is a surjective homomorphism whose kernel is a pure subgroup $J$ of $G$, then we must have $J=\{0\}$. We denote the class of weakly Hopfian groups by $\mathcal {WH}$.
\end{definition}
	
Adapting the same method, as illustrated above, this definition just says that $G\not\cong G/N$ for any non-trivial pure subgroup $N\leq G$.
	
\medskip
	
Henceforth, our plan of work is organized thus: in Section~\ref{PR}, we establish some basic properties of these definitions. In particular (Theorem~\ref{containments}), we verify that
	\[
	\mathcal{H}\subseteq \mathcal{RH}\subseteq \mathcal{WH}\subseteq  \mathcal {DF}.
	\]
Other basic properties are also established, such as the closure of these classes with respect to summands (Proposition~\ref{prop3}).
	
If $\mathcal A$ is a class of groups, we will denote the class of torsion and torsion-free groups in $\mathcal A$ by $\mathbf T\mathcal A$ and $\mathbf F\mathcal A$, respectively. Since it is almost clear, in virtue of what we established below, that a torsion-free group $G$ is weakly Hopfian if, and only if, it is Hopfian, we can conclude that
	\[
	\mathbf F \mathcal{H}= \mathbf F \mathcal{RH}= \mathbf F \mathcal{WH}
	\]
	(Theorem~\ref{suggestion}).
	
	In Section~\ref{tsect}, we consider the case of torsion groups. In particular, we show that
	\[
	\mathbf T \mathcal{H}\subsetneq \mathbf T \mathcal{RH}\subsetneq \mathbf T \mathcal{WH}= \mathbf T   \mathcal {DF}
	\]
Regarding the first inclusion, it is relatively easy to see that, for any prime $p$, the infinite co-cyclic $p$-group $\Z(p^\infty)$ is relatively Hopfian, but {\it not} Hopfian. More generally, we show that a $p$-group $G$ is relatively Hopfian if, and only if, whenever $H$ is a subgroup of $G$ such that $G/H\cong G$, then $H$ must be a bounded subgroup of $p^\omega G$ (Theorem~\ref{bigresult}). This implies, for example, that a separable $p$-group is relatively Hopfian if, and only if, it is Hopfian (Corollary~\ref{ulmhopfian}). On the other hand, using a combination of results of Pierce and Corner, we are able to construct a (reduced) $p^{\o+1}$-bounded $p$-group that is relatively Hopfian, but {\it not} Hopfian (Example~\ref{piercecorner}).

\medskip
	
We show that $p$-group $G$ is in $\mathbf T \mathcal{WH}= \mathbf T   \mathcal {DF}$ if, and only if, the Ulm invariant $f_\a (G)$ is finite whenever $\a\in \o^\infty=\{0, 1, 2, \dots\}\cup\{\infty\}$ (Theorem~\ref{torsionweak}). This means that there are lots of examples of weakly Hopfian groups that are {\it not} relatively Hopfian, e.g., the standard direct sum of cyclic $p$-groups $B=\oplus_{n\in \N} ~ \Z(p^n)$.

\medskip
	
Furthermore, in Section~\ref{mixedgroups}, we consider the case of mixed groups. Our main result concerns the case where $T$ is the torsion subgroup of $G$ and $G/T$ is Hopfian (equivalently, relatively Hopfian, or weakly Hopfian). We show that if $T$ is in any one of the classes $\mathcal H$, $\mathcal {RH}$, $\mathcal {WH}$ or $\mathcal {DF}$, then $G$ is in the same class too (Corollary~\ref{quotients}). A few other results of different branches are also established (see, e.g., Propositions~\ref{sums} and \ref{cotorsion}).

\medskip
	
Finally, in Section 5, we close our examination with two intriguing questions, namely Problems 5.1 and 5.2, that might be quite difficult.
		
\section{Preliminary Results}\label{PR}
	
Our first result is simple but useful.
	
\begin{proposition}\label{prop3} The classes $\mathcal H$, $\mathcal {RH}$, $\mathcal {WH}$ and $\mathcal {DF}$ are closed with respect to direct summands.
\end{proposition}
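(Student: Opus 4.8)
The plan is to handle all four classes by a single uniform device. Suppose $G = H \oplus K$ belongs to the relevant class; to show the summand $H$ belongs to it as well, I start from a hypothetical map witnessing that $H$ fails the defining property and extend it by the identity on the complement $K$, producing a map on $G$ that witnesses the failure of the same property for $G$, thereby contradicting the assumption on $G$.

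Concretely, for $\mathcal{H}$ a surjective endomorphism $\phi \colon H \to H$ gives the surjective endomorphism $\phi \oplus \mathrm{id}_K \colon G \to G$, which Hopficity of $G$ forces to be injective, so that $\phi$ is injective. For $\mathcal{RH}$ a surjection $\phi \colon H \to H \oplus A$ yields $\phi \oplus \mathrm{id}_K \colon H \oplus K \to (H \oplus A) \oplus K$; reassociating the target as $(H \oplus K) \oplus A = G \oplus A$ exhibits a surjection $G \to G \oplus A$, whence relative Hopficity of $G$ gives $A = \{0\}$. For $\mathcal{DF}$ I argue contrapositively: if $H = H_1 \oplus H_2$ with $H \cong H_1$ and $H_2 \neq \{0\}$, then $H_1 \oplus K$ is a proper summand of $G = H_1 \oplus H_2 \oplus K$ isomorphic to $H \oplus K = G$, contradicting direct finiteness of $G$.

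For $\mathcal{WH}$ the same extension trick applies: a surjection $\pi \colon H \to H$ with kernel $J$ pure in $H$ produces the surjection $\pi \oplus \mathrm{id}_K \colon G \to G$ whose kernel, computed inside $H \leq G$, is again $J$. The one point that is not pure bookkeeping, and the step I would flag as the main (if modest) obstacle, is verifying that $J$ remains pure in the larger group $G$. This follows from transitivity of purity: the summand $H$ is pure in $G$, and $J$ is pure in $H$, hence $J$ is pure in $G$; weak Hopficity of $G$ then forces $J = \{0\}$. Everything else reduces to the associativity and commutativity of finite direct sums, so no genuine computation is required.
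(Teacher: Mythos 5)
Your proposal is correct and uses exactly the same device as the paper: extend a map witnessing failure of the property on the summand by the identity on the complement to obtain a witness on the whole group (the paper writes out only the $\mathcal{RH}$ case and declares the others similar, whereas you supply all four, including the transitivity-of-purity point needed for $\mathcal{WH}$). No gaps.
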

	
\begin{proof}  Since all of these arguments are very similar, we will be content with showing $\mathcal {RH}$ has the indicated property. Let $G$ be relatively Hopfian and write $G = H \oplus K$. Suppose we have a surjective homomorphism $\lambda:K\to K\oplus A$; we need to show $A=\{0\}$. Therefore, $\phi:=(1_H, \lambda):G \to G\oplus A$ will also be surjective, and since $G$ is relatively Hopfian, we can conclude $A=\{0\}$, as desired.
\end{proof}
	
We continue this section with a technical result. For a prime $p$, we will denote the $p$-height function on a group $G$ by $\mid - \mid^{p}_G$. So, if $H$ is a subgroup of $G$, $$\min_G \mid H \mid^{p}_G:=\min\{\mid x \mid^{p}_G : x\in H\}.$$ If there is no danger of ambiguity, we simplify this notation to $\val -$ and $\min \val H$, respectively.

\begin{lemma}\label{hzero}
Suppose that $G$ is an arbitrary group, $p$ is a prime and $H$ is a subgroup of $G$. If $G/H\cong G$ and $\min \val H=0$, then $G$ is not relatively Hopfian.
\end{lemma}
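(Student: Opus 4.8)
The plan is to show directly that $G$ fails the condition in Definition~\ref{newone}: I will exhibit a surjective homomorphism $\Phi\colon G\to G\oplus A$ with $A\neq\{0\}$, taking $A$ to be the cyclic group $\Z(p)$ of order $p$. This immediately gives $G\notin\mathcal{RH}$.

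First I would unpack the two hypotheses. From $G/H\cong G$, fix an isomorphism $\theta\colon G/H\to G$ and set $f:=\theta\circ q\colon G\to G$, where $q\colon G\to G/H$ is the canonical projection; then $f$ is surjective with $\ker f=H$. From $\min\val{H}=0$ I obtain an element $x\in H$ of $p$-height $0$, that is, $x\notin pG$. The key construction is now a second map $\rho\colon G\to\Z(p)$ that is nontrivial on $x$: since $G/pG$ is a vector space over the field $\Z/p\Z$ and the image $\bar x$ of $x$ is nonzero there, a linear functional sending $\bar x$ to a generator exists, and composing it with the quotient $G\to G/pG$ yields a surjection $\rho\colon G\to\Z(p)$ with $\rho(x)\neq 0$. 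Because $x\in H$ and $\rho(x)$ generates $\Z(p)$, the restriction $\rho|_H$ is already onto $\Z(p)$; equivalently, every fibre $g_0+H$ of $f$ is carried by $\rho$ onto all of $\Z(p)$.

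Finally I would assemble $\Phi:=(f,\rho)\colon G\to G\oplus\Z(p)$ and verify surjectivity. Given $(y,c)\in G\oplus\Z(p)$, surjectivity of $f$ supplies $g_0$ with $f(g_0)=y$; since $\rho|_H$ is onto, some $h\in H$ satisfies $\rho(h)=c-\rho(g_0)$, and then $g_0+h$ is a preimage of $(y,c)$, as $f(g_0+h)=f(g_0)=y$ while $\rho(g_0+h)=\rho(g_0)+\rho(h)=c$. Thus $\Phi$ surjects onto $G\oplus\Z(p)$ with $\Z(p)\neq\{0\}$, contradicting relative Hopficity.

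The only subtle point is the surjectivity verification, and it rests entirely on the fact that the height-$0$ element $x$ lives inside the kernel $H$ of $f$. Choosing $\rho$ to be nontrivial on that element is precisely what allows each fibre of $f$ to recover the extra $\Z(p)$-coordinate independently of its image under $f$; without an element of height $0$ in $H$ one could not guarantee the existence of a surjection $\rho$ whose restriction to $H$ remains onto, so this is the step I expect to require the most care.
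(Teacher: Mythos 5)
Your argument is correct, but it takes a genuinely different route from the paper's. You fix a surjection $f:G\to G$ with kernel $H$, pick $x\in H$ of $p$-height $0$, build a functional $\rho:G\to G/pG\to \Z(p)$ with $\rho(x)\neq 0$ (legitimate: extend $\bar x$ to a basis of the $\Z/p\Z$-vector space $G/pG$), and check that $(f,\rho):G\to G\oplus\Z(p)$ is onto because $\rho$ restricted to $\ker f=H$ already hits all of $\Z(p)$, so each fibre of $f$ can be corrected in the second coordinate. That verification is sound. The paper instead sets $N=H\cap pG$, shows $A:=H/N$ is a non-zero $p$-bounded $p$-pure (hence pure) subgroup of $G/N$, and invokes the splitting of bounded pure subgroups to get $G/N\cong X\oplus A$ with $X\cong G/H\cong G$. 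What your version buys is elementarity: no purity computation and no appeal to the theorem that bounded pure subgroups are summands; moreover it produces exactly the minimal witness $Z\cong\Z(p)$, which dovetails with the reformulation in Proposition~\ref{characterize}. What the paper's version buys is structural information: it splits off the entire elementary quotient $H/(H\cap pG)$, which may have large rank, not just a single $\Z(p)$. For the purposes of this lemma and its uses downstream (Theorem~\ref{containments}, Theorem~\ref{bigresult}), either argument suffices.
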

	
\begin{proof} Suppose $h\in H$ satisfies $\val h=0$. Let $$N=H\cap pG=\{x\in H: \val x_G\geq 1\};$$ so, $h+N$ is a non-zero element of $A:=H/N$. Note that if $x\in H$, then $px\in N$, so that $A$ is a non-zero $p$-bounded subgroup of $G/N$. In addition, for every $\overline{0}\ne x+N\in A$, if $x+N=p(y+N)$ for some $y+N\in G/N$, then $x-py\in N\subseteq pG$. This would imply that $x-py=pz$ for some $z\in G$, and hence $x=p(y+z)$. However, this in turn would imply that $x\in H\cap pG=N$, contrary to assumption.
		
It follows that for all non-zero $x+N\in A$, that
		\[
		0\leq \val {x+N}_{A}\leq \val {x+N}_{G/N} =0.
		\]
Therefore, $A = H/N$ is a $p$-pure, $p$-subgroup of $G/N$ and so $A$ must be pure in $G/N$. But since $A$ is $p$-bounded, this implies that $G/N\cong X\oplus A$, where $$X\cong (G/N)/A=(G/N)/(H/N)\cong G/H\cong G.$$ In other words, there is a surjective homomorphism $G\to G\oplus A$ and $G$ is not relatively Hopfian, as stated.
\end{proof}
	
\begin{theorem}\label{containments} We have the following containments:
		\[
		\mathcal{H}\subseteq \mathcal{RH}\subseteq \mathcal{WH}\subseteq  \mathcal {DF}.
		\]
\end{theorem}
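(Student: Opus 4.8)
The plan is to prove the three containments $\mathcal{H}\subseteq \mathcal{RH}$, $\mathcal{RH}\subseteq \mathcal{WH}$, and $\mathcal{WH}\subseteq \mathcal{DF}$ separately, since each is a statement about which surjections a group admits. For each inclusion I would reason by contraposition: assuming $G$ fails the weaker property, I would manufacture a witness showing it fails the stronger one.

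For $\mathcal{H}\subseteq \mathcal{RH}$, suppose $G$ is Hopfian and let $\phi: G\to G\oplus A$ be a surjective homomorphism. Composing with the projection $G\oplus A\to G$ yields a surjective endomorphism of $G$, which by Hopficity must be an isomorphism; I would then argue that this forces the $A$-component to be trivial. More directly, using the equivalent formulation recorded after Definition~\ref{newone}, if $G\cong G/H$ is witnessed and $G/H\cong G\oplus A$ with $A\neq\{0\}$, then the composite $G\to G/H\cong G\oplus A\to G$ is a surjection with kernel containing a copy of $A$ (hence nonzero), contradicting Hopficity. Either route is routine. For $\mathcal{WH}\subseteq \mathcal{DF}$, suppose $G$ is not directly finite, so $G\cong G\oplus A$ with $A\neq\{0\}$; writing $G=G'\oplus A'$ with $G'\cong G$ and $A'\cong A$, the projection onto $G'$ is a surjection of $G$ onto a copy of itself whose kernel is the direct summand $A'$, which is certainly pure and nonzero, so $G$ is not weakly Hopfian.

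The genuinely substantive step is the middle containment $\mathcal{RH}\subseteq \mathcal{WH}$, and this is where Lemma~\ref{hzero} should do the heavy lifting. Suppose $G$ is not weakly Hopfian, so there is a nonzero pure subgroup $N\leq G$ with $G\cong G/N$. I need to produce a surjection $G\to G\oplus A$ with $A\neq\{0\}$. The idea is to reduce to a single prime $p$ for which $N$ has an element of $p$-height zero, and then invoke Lemma~\ref{hzero} with $H=N$: since $N$ is pure, it meets each prime in a controlled way, and I would choose $p$ so that $\min\val{N}=0$. Purity of $N$ guarantees that heights computed inside $N$ agree with heights in $G$, which is exactly the hypothesis $\min\val{N}=0$ that Lemma~\ref{hzero} needs once an appropriate generator is selected.

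The main obstacle I anticipate lies in handling the case where $N$ has no element of $p$-height zero for any single prime, i.e., ensuring that some prime $p$ genuinely witnesses $\min\val{N}_G=0$; if $N$ is contained in $pG$ for every prime $p$ simultaneously, Lemma~\ref{hzero} does not apply directly. I would address this by first passing to a suitable nonzero pure subgroup or by exploiting purity to find an element $n\in N$ of order-like or height-like minimality at some prime—purity forces $N$ to contain elements of height zero at some prime unless $N$ is divisible, and the divisible case must be treated separately, presumably by splitting off the divisible part and observing that a nonzero divisible pure subgroup already yields $G\cong G\oplus A$ directly. Reconciling these cases cleanly is the delicate part; the remaining containments are essentially formal.
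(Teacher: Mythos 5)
Your proposal is correct and takes essentially the same route as the paper: the two outer containments are handled by composing with the projection $G\oplus A\to G$ exactly as in the paper's proof, and for $\mathcal{RH}\subseteq\mathcal{WH}$ the paper likewise splits into the case where the non-zero pure kernel is divisible (giving $G\cong G\oplus H$ directly) and the case where purity produces an element of $p$-height zero for some prime $p$, to which Lemma~\ref{hzero} is then applied. The divisible-versus-height-zero dichotomy you flag as the delicate point is resolved exactly as you anticipate.
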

	
\begin{proof} We do all of these arguments indirectly. If $G$ is not directly finite, then there is an isomorphism $G\cong G\oplus A$ with $A\ne \{0\}$. Therefore, the usual projection $G\cong G\oplus A\to G$ will be surjective with pure kernel isomorphic to $A\ne \{0\}$, so that $G$ is not weakly Hopfian.
		
If $G$ is not relatively Hopfian then let $\phi:G\to G\oplus A$ be surjective with $A\ne \{0\}$. Again, if $\pi: G\oplus A\to G$ is the usual projection, then $\pi\circ \phi:G\to G$ is a surjective endomorphism with a non-zero kernel, showing that $G$ is not Hopfian.
		
Finally, suppose $G$ is not weakly Hopfian. We want to show it is not relatively Hopfian.
By definition, there is a non-zero pure subgroup $H$ of $G$ with $G \cong G/H$. If $H$ were divisible, we could conclude that $$G\cong (G/H)\oplus H\cong G\oplus H,$$ so $G$ will not be relatively Hopfian, as desired.
		
On the other hand, suppose $H$ \textit{fails} to be divisible. So, for some prime $p$, there is an $x\in H$ such that $0=\val x_H=\val x_G$. So, Lemma~\ref{hzero} applies, showing that $G$ is not relatively Hopfian, as wanted.
\end{proof}
	
As noted in the introductory section, since for torsion-free groups, being Hopfian agrees with being weakly Hopfian (see Theorem~\ref{suggestion} quoted below), we have the following consequence of the last result:
	
\begin{corollary}\label{cor2.4} The following equalities hold:
		\[
		\mathbf F \mathcal{H}= \mathbf F \mathcal{RH}= \mathbf F \mathcal{WH}.
		\]
\end{corollary}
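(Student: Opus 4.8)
The plan is to deduce the corollary from Theorem~\ref{containments} by showing that, once one restricts attention to torsion-free groups, the outermost two classes $\mathcal H$ and $\mathcal{WH}$ already coincide. First I would intersect the chain $\mathcal H\subseteq\mathcal{RH}\subseteq\mathcal{WH}$ with the class of torsion-free groups, which gives $\mathbf F\mathcal H\subseteq\mathbf F\mathcal{RH}\subseteq\mathbf F\mathcal{WH}$ for free. Consequently the whole statement reduces to establishing the single reverse inclusion $\mathbf F\mathcal{WH}\subseteq\mathbf F\mathcal H$; that is, to verifying that a torsion-free weakly Hopfian group is automatically Hopfian.

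The crucial ingredient is a purity observation. Let $G$ be torsion-free and let $\pi\colon G\to G$ be any surjective endomorphism with kernel $K$. Then $G/K\cong\im\pi=G$ is torsion-free, and a subgroup with torsion-free quotient is automatically pure: if $nx\in K$ for some $x\in G$ and positive integer $n$, then $x+K$ is a torsion element of the torsion-free group $G/K$, so $x\in K$; this yields $nG\cap K\subseteq nK$, whence $K$ is pure in $G$. Thus every kernel of a surjective endomorphism of a torsion-free group is pure, which is exactly the content recorded later as Theorem~\ref{suggestion}.

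Granting this, the conclusion is immediate. If $G\in\mathbf F\mathcal{WH}$ and $\pi\colon G\to G$ is surjective, then its kernel $K$ is pure by the observation, so weak Hopficity forces $K=\{0\}$, meaning $\pi$ is an automorphism; hence $G$ is Hopfian and $G\in\mathbf F\mathcal H$. Combining this with the inclusions of the first paragraph delivers the desired equalities $\mathbf F\mathcal H=\mathbf F\mathcal{RH}=\mathbf F\mathcal{WH}$.

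I expect no serious obstacle: the entire substance lies in the elementary purity remark. The only point demanding a little care is to notice precisely that the isomorphism $G/K\cong G$, and hence the torsion-freeness of the quotient, is what forces the kernel to be pure. This is exactly what separates the torsion-free setting from the general one, where kernels of surjective endomorphisms need not be pure and the classes $\mathcal H$, $\mathcal{RH}$ and $\mathcal{WH}$ genuinely differ.
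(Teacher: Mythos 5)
Your argument is correct and is essentially the paper's own: the inclusions come from Theorem~\ref{containments}, and the collapse in the torsion-free case rests on the observation (recorded in the paper as Theorem~\ref{suggestion}) that the kernel of a surjective endomorphism of a torsion-free group is pure because the quotient is torsion-free. Nothing further is needed.
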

	
\bigskip
	
Clearly, if $G$ has a direct summand of the form $A^{(\o)}$ for some $A\ne \{0\}$, then $G\cong G\oplus A$, so that it is not directly finite. We will frequently use the consequent fact that no group in any of the four classes discussed in the Introduction can have such a direct summand. In particular, a directly finite group with non-trivial torsion must be $p$-semi-standard for all primes $p$, where we define the notion of $p$-semi-standard as follows:
	
\medskip
	
Let $\mathcal P$ be the collection of all primes. If $p\in \mathcal P$, we will say the group $G$ is $p$-\textit{semi-standard} if, for all $n<\omega$, the Ulm-Kaplansky invariant $$f_n(G)={\rm rank}((p^nG)[p]/(p^{n+1}G)[p])$$ is finite.
	
\medskip
	
As an other consequence of interest, we obtain:
	
\begin{corollary}\label{cor3} Suppose $G=R\oplus D$, where $R$ is reduced and $D$ is divisible. Assume further that $D=D_0\oplus (\bigoplus_{p\in \mathcal P}D_p)$, where $D_0$ is torsion-free and $D_p$ is a $p$-group. If $G$ is directly finite, then $D_0$ has finite rank and, for all $p\in \mathcal P$, $D_p$ has finite $p$-rank and $R$ is $p$-semi-standard.
\end{corollary}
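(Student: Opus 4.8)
The plan is to derive all three conclusions from the single principle, recalled just before the statement, that a directly finite group admits no direct summand of the form $A^{(\omega)}$ with $A \neq \{0\}$. Since a direct summand of a direct summand of $G$ is again a direct summand of $G$, it suffices in each case to show that the failure of the desired conclusion plants a copy of some $A^{(\omega)}$ inside one of the pieces $D_0$, $D_p$ or $R$, and hence inside $G$, which is absurd.

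First I would dispose of the divisible part. As $D_0$ is torsion-free divisible it is a $\Q$-vector space, so $D_0 \cong \Q^{(\kappa)}$ with $\kappa$ its rank; if $\kappa$ were infinite then $\Q^{(\omega)}$ would be a summand of $D_0$, and so of $G$, contradicting direct finiteness. Hence $D_0$ has finite rank. Likewise each $D_p$ is a divisible $p$-group, so $D_p \cong \Z(p^\infty)^{(\lambda)}$; an infinite $\lambda$ would exhibit $\Z(p^\infty)^{(\omega)}$ as a summand of $G$, again impossible, so every $D_p$ has finite $p$-rank.

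The remaining and substantive task is to show that $R$ is $p$-semi-standard for each prime $p$, i.e.\ that $f_n(R) < \aleph_0$ for all $n < \omega$. The first point is that the divisible summand is invisible to the finite Ulm--Kaplansky invariants: since $D$ is divisible we have $p^nD = D$, whence $(p^nD)[p] = D[p]$ and $f_n(D) = 0$ for every $n$; additivity of $f_n$ over the decomposition $G = R \oplus D$ then yields $f_n(G) = f_n(R)$. It thus suffices to prove that direct finiteness forces $f_n(G) < \aleph_0$. For this I would pass to a $p$-basic subgroup $B$ of $G$ and isolate the direct sum $B_n$ of its cyclic summands of order $p^{n+1}$, so that $B_n \cong \Z(p^{n+1})^{(f_n(G))}$ is bounded. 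Writing $B = B_n \oplus B'$, the subgroup $B_n$ is pure in $G$ (purity of $B$ descends to $B_n$, since any element of $B_n \cap p^kG$ lies in $p^kB = p^kB_n \oplus p^kB'$ and hence, by directness, in $p^kB_n$); as a bounded pure subgroup it is a direct summand of $G$. Were $f_n(G) \geq \aleph_0$, then $B_n$, and therefore $G$, would contain a summand isomorphic to $\Z(p^{n+1})^{(\omega)}$, contradicting direct finiteness. Hence $f_n(G) = f_n(R) < \aleph_0$ for all $n$, as required.

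I expect the crux of the argument to be precisely this extraction of a bounded summand from an infinite invariant $f_n(G)$: the two facts that carry the weight are that the homogeneous bounded component $B_n$ of a $p$-basic subgroup remains pure in the whole of $G$, and that bounded pure subgroups split off. Both are standard, but they are where the structure theory is genuinely used; once they are in place, the contradiction with the absence of $A^{(\omega)}$-summands is immediate and the three assertions follow together.
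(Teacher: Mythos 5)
Your proposal is correct and follows essentially the same route as the paper: infinite rank in $D_0$ or $D_p$ yields a $\Q^{(\omega)}$ or $\Z(p^\infty)^{(\omega)}$ summand, and an infinite $f_n(G)=f_n(R)$ yields a bounded homogeneous component $\Z(p^{n+1})^{(\omega)}$ of a $p$-basic subgroup that is pure and bounded in $G$, hence splits off, contradicting direct finiteness. The only (immaterial) difference is that you take the basic subgroup of $G$ rather than of $R$ and spell out the purity and additivity steps that the paper leaves implicit.
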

	
\begin{proof}
Clearly, if $D_0$ has infinite rank or some $D_p$ has infinite $p$-rank, then $G$ will have a summand of the form $A^{(\o)}$, where either $A$ is isomorphic to $\Q$ or to some $\Z(p^\infty)$. In addition, if $p\in \mathcal P$ and $n<\o$ is such that $f_n(G)=f_n(R)$ is infinite, and $B$ is a $p$-basic subgroup of $R$, then $B$ will have a direct summand isomorphic to $\Z(p^{n+1})^{(\o)}$. Since this summand will be pure and bounded in $G$, it will also be a direct summand of $G$. In any of the above cases, then, $G$ will fail to be directly finite, completing the arguments.
\end{proof}
	
Again, the above observation also applies to the other three classes we are considering. 	
	
\begin{theorem}\label{prop218} Suppose $K$ and $L$ are groups such that ${\rm Hom}(K,L)=\{0\}$. Then, $G=L\oplus K$ is in one of $\mathcal H$, $\mathcal {RH}$, $\mathcal {WH}$ or $\mathcal {DF}$ if, and only if, $L$ and $K$ are both members of the same class.
\end{theorem}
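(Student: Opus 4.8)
The plan is to prove the two implications separately, noting that only the ``if'' direction uses the hypothesis $\Hom(K,L)=\{0\}$. The ``only if'' direction is immediate: a direct summand of a group in any of the four classes again lies in that class by Proposition~\ref{prop3}, so if $G=L\oplus K$ is in the class then so are $L$ and $K$. For the converse I would first record the structural consequence of $\Hom(K,L)=\{0\}$: writing a homomorphism $\phi\colon G\to G\oplus A$ in matrix form relative to $G=L\oplus K$ and $G\oplus A=L\oplus K\oplus A$, the component $K\to L$ is forced to be zero, so
\[
\phi(l,k)=\bigl(\alpha(l),\ \gamma(l)+\delta(k),\ \mu(l)+\nu(k)\bigr),
\]
with $\alpha\in\End(L)$, $\gamma\colon L\to K$, $\delta\in\End(K)$, $\mu\colon L\to A$ and $\nu\colon K\to A$. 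In particular $K$ is a fully invariant summand of $G$ (its image avoids the $L$-coordinate), and projecting $\phi$ to $L$ shows that surjectivity of $\phi$ already forces $\alpha$ to be surjective.

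For $\mathcal H$ and $\mathcal{DF}$ this suffices. In the Hopfian case $A=\{0\}$, and one checks that surjectivity of $\phi$ forces both $\alpha$ and $\delta$ to be surjective; since $L,K\in\mathcal H$ these maps are then injective, whence $\phi$ is injective. For $\mathcal{DF}$ I would argue ring-theoretically: $G\in\mathcal{DF}$ exactly when $\End(G)$ is Dedekind-finite, and the matrix description above identifies $\End(G)$ with the formal lower-triangular matrix ring carrying $\End(L),\End(K)$ on the diagonal and $\Hom(L,K)$ below it. A routine computation shows that such a triangular ring is Dedekind-finite precisely when its two diagonal corners are, so $L,K\in\mathcal{DF}$ yields $G\in\mathcal{DF}$.

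The substantive case is $\mathcal{RH}$ (and, with purity bookkeeping, $\mathcal{WH}$), where the obstacle is that $\alpha$ need \emph{not} be injective: $L$ is only relatively Hopfian, not Hopfian, so a surjective endomorphism of $L$ may have nonzero kernel $N=\Ker\alpha$. I would proceed in two steps. First, quotienting the target by $\nu(K)$ annihilates the image of the fully invariant summand $K$, so $\phi$ descends to a surjection $L\to L\oplus\bigl(A/\nu(K)\bigr)$; applying $L\in\mathcal{RH}$ gives $A=\nu(K)$, i.e. $\nu$ is surjective. Second, the snake lemma applied to $\phi$ together with the sequences $0\to K\to G\to L\to 0$ and $0\to K\oplus A\to G\oplus A\to L\to 0$ produces a connecting surjection $N\twoheadrightarrow C$, where $C=\operatorname{coker}(\phi|_K)$ is exactly the obstruction to $\phi|_K\colon K\to K\oplus A$ being onto. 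Since $\nu$ is surjective we have $\mu(L)\subseteq\nu(K)$, which lets me extend the connecting map to a homomorphism $L\to C$; combining it with $\alpha$ produces a surjection $L\to L\oplus C$. Then $L\in\mathcal{RH}$ forces $C=\{0\}$, so $\phi|_K$ is onto, and finally $K\in\mathcal{RH}$ gives $A=\{0\}$, as wanted.

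For $\mathcal{WH}$ I expect the same skeleton to work with $A=\{0\}$ and $\phi$ a surjective endomorphism with pure kernel $J$: the induced maps $\alpha$ on $L$ and $\delta$ on $K$ are surjective, one has $\Ker\delta=J\cap K$ while $\Ker\alpha$ is the image of $J$ in $L$, and one wants $\mathcal{WH}$ of the two factors to force these kernels to vanish. The main difficulty I anticipate is entirely one of purity: to invoke $\mathcal{WH}$ for $L$ and $K$ I must verify that $\Ker\delta=J\cap K$ is pure in $K$ and that the induced kernel on $L$ is pure, and the purity of $J$ in $G$ does not transfer to these subgroups automatically. Settling this — presumably by exploiting that $K$ is simultaneously pure (being a summand) and fully invariant, in tandem with the surjectivity of $\delta$ — is the step I expect to require the most care.
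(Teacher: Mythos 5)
Your overall strategy coincides with the paper's: use $\Hom(K,L)=\{0\}$ to make $K$ behave like a fully invariant summand, deal with the $L$-factor first via the induced map on $G/K$, and then finish with the $K$-factor. The ``only if'' direction and the $\mathcal H$ case are fine, except that surjectivity of $\phi$ alone only yields $K=\gamma(\Ker\,\alpha)+\delta(K)$, so you must first invoke Hopficity of $L$ to kill $\Ker\,\alpha$ \emph{before} concluding that $\delta$ is onto. In the $\mathcal{RH}$ case your route is correct but needlessly circuitous: the paper obtains your surjection $L\to L\oplus C$ in one stroke as the composite $L\cong G/K\to (G\oplus A)/\phi(K)\cong L\oplus\bigl[(K\oplus A)/\phi(K)\bigr]$, and then $K\in\mathcal{RH}$ forces $(K\oplus A)/\phi(K)\ne\{0\}$ whenever $A\ne\{0\}$. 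This makes your Step 1 superfluous, and the ``extension of the connecting map'' you agonize over exists for free: it is just $l\mapsto(\gamma(l),\mu(l))+\phi(K)$, globally defined because of the splitting $G=L\oplus K$ — the condition $\mu(L)\subseteq\nu(K)$ has nothing to do with it. (Also, ``quotienting by $\nu(K)$'' should read ``quotienting by $K\oplus\nu(K)$''.) Your $\mathcal{DF}$ argument via Dedekind-finiteness of the formal lower-triangular matrix ring is genuinely different from the paper, which simply reruns the $\mathcal{RH}$ argument with $\phi$ an isomorphism; your version is correct and arguably cleaner, at the cost of importing the equivalence between direct finiteness of $G$ and Dedekind-finiteness of $\End(G)$.

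The genuine gap is the $\mathcal{WH}$ case, which you explicitly leave unresolved. The crux is not where you place it: once the induced kernel on $L\cong G/K$ is shown to vanish, one gets $J\subseteq K$, and then $J=J\cap K$ is \emph{automatically} pure in $K$ (from $nK\cap J\subseteq nG\cap J=nJ$), so the second half costs nothing. The real work is proving that the kernel $J'$ of the induced surjection $\pi'\colon G/K\to G/K$ is pure in $G/K$, and the paper does this by an explicit element computation: given $x+K$ with $n(x+K)\in J'$, write $\pi(x)=y+z$ with $y\in K$, $z\in L$, note $nz=0$, choose $w_0$ with $\pi(w_0)=z$, observe $nw_0\in nG\cap J=nJ$ by purity of $J$, correct $w_0$ to $w$ with $\pi(w)=z$ and $nw=0$, and set $x'=x-w$; then $x'+K\in J'$ and $nx'+K=nx+K$. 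Without this (or some substitute) your proposal proves only three of the four assertions of the theorem.
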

	
\begin{proof} Certainly, if $G$ is in one of these classes, then so are $L$ and $K$. The converse will require individual arguments like these.
		
Suppose first that $L,K\in \mathcal {H}$. Let $\pi:L\oplus K\to L\oplus K$ be a surjective homomorphism with  kernel of $J$; we want to show $J=\{0\}$. Our hypothesis assures that $\pi(K)\subseteq K$. Therefore, $\pi$ induces a surjective composite homomorphism $$\pi':L\cong G/K\to G/\pi(K) \to G/K\cong L.$$ Since $L$ is Hopfian, we can conclude that this composite is injective. This readily implies that $J\subseteq K$, and $\pi(K)=K$. But then, since $K$ is Hopfian, we can now conclude that $J=\{0\}$, as desired.
		
Suppose now that $L,K\in \mathcal {WH}$. Again, as above, suppose  $\pi:L\oplus K\to L\oplus K$ is a surjective homomorphism with \textit{pure} kernel $J$; we still want to show $J=\{0\}$. We can again form the composite $$\pi': G/K\to G/\pi(K) \to G/K,$$ but in this case we will need to show that its kernel, which we denote by $J'$, is pure in $G/K\cong L$. Suppose, therefore, that $x+K\in G/K$ and $n x+K\in J'$; we want to find $x'+K\in J'$ such that $n x'+K=n x+K$. Note that we are assuming that $n \pi(x)=\pi(nx)\in K$. Suppose $\pi(x)=y+z$, where $y\in K$, $z\in L$. Since $n\pi(x)\in K$, we can conclude that $nz=0$. The purity of $J$ and the surjectivity of $\pi$ guarantee that we can find $w\in K\oplus L$ such that $\pi(w)=z$ and $nw=0$ as $nw\in J$. Letting $x':=x-w$, it follows that $\pi(x')=y+z-z=y\in K$, so that $x'+J$ is in the kernel of $\pi'$. And since $nw=0$, one finds that $nx'=nx$, so that $n x'+K=n x+K$, as asked for.
		
The rest of the argument goes through as before. Since $L$ is weakly Hopfian, we can conclude that $\pi'$ is injective, $J\subseteq K$, $\pi(K)=K$, and since $J$ will also be pure in $K$, it follows that $J=\{0\}$, as desired.
		
Suppose next that $L,K\in \mathcal {RH}$. Arguing indirectly, if $G$ is not relatively Hopfian, then there is group $A\ne \{0\}$ and a surjective homomorphism
		$\phi:G\to G\oplus A$. Thus, $\phi(K)\subseteq K\oplus A$. Since $K$ is relatively Hopfian, we cannot have $\phi(K)=K\oplus A$. So, if $A'=(K\oplus A)/\phi(K)$, then $A'\ne \{0\}$. Since there is a surjective composition
		$$
		L\cong (L\oplus K)/K \to (L\oplus K\oplus A)/\phi(K)\cong L\oplus [(K\oplus A)/\phi(K)]=L\oplus A',
		$$
we have contradicted that $L$ is relatively Hopfian.
		
Finally, the case where $K, L\in \mathcal {DF}$ is handled almost exactly as when $K, L\in \mathcal {RH}$, and so it is left to the reader.
\end{proof}
	
We are unable to decide whether or not the semi-rigid restriction on the pair of groups in this theorem can be removed.

\medskip
	
However, as two direct consequences, we derive immediately:
	
\begin{corollary} Suppose $G=R\oplus D$, where $R$ is a reduced group and $D$ is a divisible group. Then, $G$ is a member of any of the four classes $\mathcal H$, $\mathcal {RH}$, $\mathcal {WH}$ or $\mathcal {DF}$ if, and only if, both $R$ and $D$ are their members.
\end{corollary}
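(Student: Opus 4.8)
The plan is to read this off directly from Theorem~\ref{prop218}, whose hypothesis---the vanishing of one of the two cross Hom-groups---is precisely what the reduced/divisible decomposition supplies for free. Concretely, I would set $L := R$ and $K := D$, so that the given splitting $G = R \oplus D$ already has the shape $G = L \oplus K$ demanded by that theorem. The only datum that needs to be checked before invoking it is that $\Hom(K,L) = \Hom(D,R) = \{0\}$.

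To see this, I would use the standard fact that a homomorphic image of a divisible group is again divisible: if $\phi : D \to R$ is any homomorphism, then $\phi(D)$ is a divisible subgroup of $R$. Since $R$ is reduced, its only divisible subgroup is $\{0\}$, and hence $\phi = 0$. This gives $\Hom(D,R) = \{0\}$, exactly the semi-rigidity condition required by Theorem~\ref{prop218}. (Note that the argument only needs the Hom-group in this one direction to vanish; $\Hom(R,D)$ need not be trivial, but that is irrelevant here.)

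With the hypothesis confirmed, Theorem~\ref{prop218} applies verbatim: for each of the four classes $\mathcal{C} \in \{\mathcal{H}, \mathcal{RH}, \mathcal{WH}, \mathcal{DF}\}$, the group $G = R \oplus D$ lies in $\mathcal{C}$ if, and only if, both $R$ and $D$ lie in $\mathcal{C}$, which is the assertion of the corollary. I do not anticipate any genuine obstacle: the entire content of the statement is the observation that divisible-to-reduced homomorphisms vanish, so the corollary is an immediate specialization of the preceding theorem rather than an independent argument.
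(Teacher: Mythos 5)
Your proposal is correct and is exactly the paper's intended route: the authors state this corollary as an immediate consequence of Theorem~\ref{prop218}, and the only point to verify is the vanishing of $\Hom(D,R)$, which you establish correctly (with the right assignment $K=D$, $L=R$, so that the theorem's hypothesis $\Hom(K,L)=\{0\}$ is the one that holds). Nothing is missing.
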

	
\begin{corollary}\label{torsionsplit} If $G=T\oplus L$ is a splitting mixed group, where $T$ is torsion and $L$ is torsion-free, then $G$ is in one of the four classes $\mathcal H$, $\mathcal {RH}$, $\mathcal {WH}$ or $\mathcal {DF}$ if, and only if, $T$ and $L$ are both members of the same class.
\end{corollary}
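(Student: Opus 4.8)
The plan is to deduce this directly from Theorem~\ref{prop218}, whose hypothesis is the vanishing of a single $\Hom$-group. The only thing that needs checking is that the splitting $G = T \oplus L$ satisfies the semi-rigidity condition required there, with the torsion summand assigned to the correct role.

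First I would record the standard fact that $\Hom(T, L) = \{0\}$ whenever $T$ is torsion and $L$ is torsion-free. Indeed, given a homomorphism $f : T \to L$ and an element $t \in T$, choose $n \geq 1$ with $nt = 0$; then $n f(t) = f(nt) = 0$, and since $L$ is torsion-free this forces $f(t) = 0$. Thus every homomorphism from $T$ to $L$ is trivial. With this in hand, I would apply Theorem~\ref{prop218} taking $K := T$ and keeping the role of $L$ as the torsion-free summand; the direct sum being commutative, the $G = L \oplus K$ of that theorem is just our $G = T \oplus L$. The required hypothesis $\Hom(K, L) = \Hom(T, L) = \{0\}$ then holds, and the theorem yields at once that $G$ lies in any one of $\mathcal H$, $\mathcal{RH}$, $\mathcal{WH}$ or $\mathcal{DF}$ if, and only if, $T$ and $L$ both lie in that same class.

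The one point deserving attention — and really the only place where one could slip — is the asymmetry of Theorem~\ref{prop218}: its hypothesis requires only $\Hom(K, L) = \{0\}$, and not also $\Hom(L, K) = \{0\}$. This is fortunate, since $\Hom(L, T)$ is typically nonzero (a torsion-free group can map nontrivially onto torsion quotients). Hence it is essential to put the torsion group in the role of $K$ and the torsion-free group in the role of $L$, so that the one-sided hypothesis is the one that automatically holds. With that assignment there is no genuine obstacle, and the corollary follows immediately.
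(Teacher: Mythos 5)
Your proof is correct and is exactly the derivation the paper intends: Corollary~\ref{torsionsplit} is stated there as an immediate consequence of Theorem~\ref{prop218}, using precisely the observation that $\Hom(T,L)=\{0\}$ for $T$ torsion and $L$ torsion-free, with $T$ placed in the role of $K$. Your remark about the asymmetry of the hypothesis and the correct assignment of roles is a worthwhile clarification, but the argument itself matches the paper's.
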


We now note the following pivotal variation on the definition of relative Hopficity.

\begin{proposition}\label{characterize} For a group $G$, the following are equivalent:
	
(a) $G$ is relatively Hopfian;
	
(b) For all primes $p$, one of the groups $G \oplus \Z(p)$, $G \oplus \qc$ is not a homomorphic image of $G$ or, equivalently, there is no surjective homomorphism $\gamma:G\to G\oplus Z$, where $p$ is a prime and either $Z\cong \Z(p)$ or $Z\cong \Z(p^\infty)$.
\end{proposition}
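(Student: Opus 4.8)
The plan is to establish the two implications separately, with essentially all of the work residing in the passage from a general witness of non-Hopficity to one of the prescribed special shape. The implication $(a)\Rightarrow(b)$ is immediate: if $G$ is relatively Hopfian then, by Definition~\ref{newone}, there is no surjection $G\to G\oplus A$ for any $A\neq\{0\}$ at all, so in particular none with $A\cong\Z(p)$ or $A\cong\qc$, which is exactly the surjection-free formulation of $(b)$.

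For $(b)\Rightarrow(a)$ I would argue contrapositively. Assume $G$ is not relatively Hopfian and fix a surjective homomorphism $\phi:G\to G\oplus A$ with $A\neq\{0\}$. The guiding idea is that the unwanted summand $A$ can be collapsed onto a minimal piece by postcomposition: given any surjection $\psi:A\to Z$, the induced map $1_G\oplus\psi:G\oplus A\to G\oplus Z$ is surjective, whence so is the composite $(1_G\oplus\psi)\circ\phi:G\to G\oplus Z$. Thus it suffices to produce a surjection from $A$ onto a group $Z$ of the form $\Z(p)$ or $\qc$ for some prime $p$.

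Everything therefore reduces to the following self-contained fact, which I regard as the heart of the matter: \emph{every nonzero group $A$ admits a quotient isomorphic to $\Z(p)$ or to $\qc$ for some prime $p$.} I would prove this by dividing on divisibility. If $A$ is not divisible, choose a prime $p$ with $pA\neq A$; then $A/pA$ is a nonzero vector space over the field with $p$ elements and hence carries a surjection onto $\Z(p)$, which, composed with the quotient map $A\to A/pA$, does the job. If $A$ is divisible, invoke the structure theorem to write it as a direct sum of copies of $\Q$ and of the Pr\"ufer groups $\Z(q^\infty)$: if any Pr\"ufer summand is present, its projection is a surjection onto some $\qc$; otherwise $A$ is a nonzero rational vector space, and one first projects onto a single copy of $\Q$ and then applies the canonical surjections $\Q\to\Q/\Z\cong\bigoplus_{q}\Z(q^\infty)\to\qc$.

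The only genuinely delicate point is this torsion-free divisible case, where no elementary or Pr\"ufer quotient is visible until one factors through $\Q/\Z$; all remaining cases are routine. Once the displayed fact is in hand, the composite constructed above furnishes the required surjection onto $G\oplus Z$, completing the contrapositive and hence the equivalence.
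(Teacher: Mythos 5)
Your proposal is correct and follows essentially the same route as the paper: both argue the contrapositive by splitting on whether $A$ is divisible, producing a surjection $\xi:A\to Z$ with $Z\cong\Z(p)$ or $Z\cong\qc$, and then postcomposing $\phi$ with $1_G\oplus\xi$. The only difference is that you spell out the existence of these quotients (via $A/pA$ and the structure theorem for divisible groups, including the $\Q\to\Q/\Z\to\qc$ step) where the paper merely asserts them.
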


\begin{proof} Certainly (a) immediately insures (b). We prove the converse by contrapositive, so assume that $\phi:G\to G\oplus A$ is a surjection with $A\ne \{0\}$. If $A$ is not divisible, then, for some prime $p$, we have $\{\overline{0}\}\ne A/pA$. Consequently, there is clearly a surjective homomorphism $\xi: A\to \Z(p):=Z$. Suppose next that $A$ is divisible. If $A$ has non-zero $p$-torsion for some prime $p$, then there is a surjective homomorphism $\xi: A\to \Z(p^\infty):=Z$. And if $A$ is torsion-free, then, for all primes $p$, there is a surjective homomorphism $\xi: A\to \Z(p^\infty):=Z$. In either case, letting
    \[
	\gamma=(1_G\oplus \xi)\circ \phi:G\to G\oplus A\to G\oplus Z
	\]
shows that (b) also fails, finishing the proof.
\end{proof}

With this reformulation at hand, it is not too difficult to establish our next result. Nevertheless, we shall give a direct proof which seems to be more natural and transparent.

\begin{theorem}\label{suggestion} Suppose $G$ is a torsion-free group. Then, the following are equivalent:
	
(a) $G$ is Hopfian;
	
(b) $G$ is relatively Hopfian;
	
(c) $G$ is weakly Hopfian.
\end{theorem}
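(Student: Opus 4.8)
The plan is to use Theorem~\ref{containments}, which already gives the chain $\mathcal{H}\subseteq \mathcal{RH}\subseteq \mathcal{WH}$ for every group. Thus (a) $\Rightarrow$ (b) $\Rightarrow$ (c) hold with no torsion-freeness needed, and the only substantive task is to prove the single implication (c) $\Rightarrow$ (a) under the hypothesis that $G$ is torsion-free. That is, I want to show that if $G$ is torsion-free and weakly Hopfian, then $G$ is Hopfian.

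To do this I argue by contrapositive: assume $G$ is torsion-free but \emph{not} Hopfian, and produce a nonzero \emph{pure} subgroup $N$ with $G\cong G/N$, which witnesses that $G$ is not weakly Hopfian. Since $G$ is not Hopfian, there is a surjective endomorphism $\pi:G\to G$ with kernel $N=\Ker\pi\ne\{0\}$. The whole point is that in a torsion-free group this kernel is automatically pure. Indeed, suppose $x\in G$ and $nx\in N$ for some positive integer $n$; then $n\pi(x)=\pi(nx)=0$, and since $G$ is torsion-free this forces $\pi(x)=0$, i.e.\ $x\in N$ already. Hence $N$ is pure (in fact the kernel of any endomorphism of a torsion-free group is pure), and $G/N\cong \im\pi=G$ exhibits $G$ as isomorphic to a quotient by a nonzero pure subgroup. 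Therefore $G$ is not weakly Hopfian, completing the contrapositive.

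I would then remark that this same observation gives (c) $\Rightarrow$ (b) directly and recovers Corollary~\ref{cor2.4}, since the equivalence of (a) and (c) collapses the whole chain $\mathcal{H}$, $\mathcal{RH}$, $\mathcal{WH}$ on the torsion-free class. The main conceptual point -- and really the only place anything is used beyond the already-established containments -- is the torsion-freeness: it is precisely what makes kernels of surjective endomorphisms pure, so that an ordinary Hopficity failure is upgraded to a weak Hopficity failure. There is essentially no obstacle here; the argument is short and the only thing to be careful about is the direction of the implications, making sure that the nontrivial content is exactly (c) $\Rightarrow$ (a) and that purity of $N$ is genuinely a consequence of torsion-freeness rather than an extra assumption.
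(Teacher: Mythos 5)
Your proof is correct and follows essentially the same route as the paper: (a) $\Rightarrow$ (b) $\Rightarrow$ (c) from Theorem~\ref{containments}, and then (c) $\Rightarrow$ (a) by observing that the kernel of a surjective endomorphism of a torsion-free group is automatically pure (the paper phrases this directly rather than by contrapositive, but the content is identical).
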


\begin{proof}
	One sees that (a) implies (b) implies (c) is true by Theorem~\ref{containments} listed above (notice that neither of these require that $G$ be torsion-free). To establish the result, it then suffices to show that (c) implies (a).
	So, suppose $G$ is weakly Hopfian (and torsion-free). If $\phi:G\to G$ is a surjective endomorphism with kernel $N$, then we need to show that $N=\{0\}$. But since $G$ is torsion-free and $G/N\cong G$, it follows at once that $N$ is pure in $G$. Thus, since $G$ is weakly Hopfian, we can conclude that $N=\{0\}$, and $G$ is then Hopfian.
\end{proof}

We now want to explore a simple but important question paralleling a known property of Hopfian groups: suppose that $A$ is a relatively Hopfian $p$-group and $B$ is a finite group, is the direct sum $A \oplus B$ still relatively Hopfian? The answer is {\it yes} if $A$ is Hopfian - see, for example, \cite{GG} or \cite{H1}. It follows then from Theorem\ref{suggestion} above that we may assume throughout that $A$ is {\it not} torsion-free. We focus now on the situation when $A$ is a $p$-group. Before proceeding to examine this situation, we mention first a result that considerably strengthens \cite[Proposition 113.3]{F1} in the restricted situation where we discuss $p$-groups.

\begin{lemma}\label{extending} Suppose $T$ is a $p$-group and $k<\omega$. If $\phi: p^k T\to p^k T$ is an epimorphism, then $\phi$ extends to an epimorphism $\gamma: T\to T$.
\end{lemma}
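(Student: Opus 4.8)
The plan is to reduce to the case $k=1$ and then iterate. Writing $p^kT = p(p^{k-1}T)$, a solution of the case $k=1$, applied to the $p$-group $S=p^{k-1}T$, extends the epimorphism $\phi$ of $pS=p^kT$ to an epimorphism of $S=p^{k-1}T$; repeating this one-step extension for $S=p^{k-1}T,\,p^{k-2}T,\,\dots,\,T$ in turn (formally, an induction on $k$, the base case $k=0$ being trivial) pushes $\phi$ all the way up to an epimorphism $\gamma\colon T\to T$. Thus it suffices to treat the case $k=1$: extend an epimorphism $\phi\colon pT\to pT$ to an epimorphism $\gamma\colon T\to T$.

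To produce a homomorphic extension, I would fix elements $x_i\in T$ whose cosets form a basis of the vector space $V:=T/pT$ over the field $\Z/p\Z$. Since $\phi(px_i)\in\phi(pT)\subseteq pT$, I may choose $y_i\in T$ with $py_i=\phi(px_i)$, and then define $\gamma_0$ on $T$ by $\gamma_0|_{pT}=\phi$ and $\gamma_0(x_i)=y_i$. A routine verification that every relation of $T$ over the generating set $pT\cup\{x_i\}$ has the form $\sum_i pm_i x_i=t\in pT$ — a relation that $\phi$ already respects precisely because $py_i=\phi(px_i)$ — shows that $\gamma_0$ is a well-defined endomorphism of $T$ extending $\phi$.

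The essential difficulty, and the step I expect to be the main obstacle, is surjectivity; here the freedom in the choice of the $y_i$ must be exploited. Because $\phi$ is onto, $\im\gamma_0\supseteq\phi(pT)=pT$, so $\gamma_0$ is onto exactly when the induced endomorphism $\overline{\gamma_0}$ of $V$ is onto. Consider the multiplication-by-$p$ map $\mu\colon V\to pT/p^2T$: it is surjective with kernel $W:=(T[p]+pT)/pT$, the surjectivity of $\phi$ makes the induced endomorphism $\overline{\phi}$ of $pT/p^2T$ surjective, and $\mu\,\overline{\gamma_0}=\overline{\phi}\,\mu$. This identity forces $\im\overline{\gamma_0}+W=V$, so the only possible defect is that $W$ fails to lie in the image — a phenomenon occurring only when $T[p]\not\subseteq pT$. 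The key observation is that the same identity gives $\overline{\gamma_0}(W)\subseteq W$ (for $w\in W=\ker\mu$ one has $\mu\,\overline{\gamma_0}(w)=\overline{\phi}\,\mu(w)=0$), while replacing each $y_i$ by $y_i+t_i$ with $t_i\in T[p]$ leaves $py_i$ unchanged and alters $\overline{\gamma_0}$ by an arbitrary homomorphism $V\to W$. Using this freedom, I would adjust $\gamma_0$ to a map $\gamma$ whose induced endomorphism restricts to the identity on $W$; then $W\subseteq\im\overline{\gamma}$, hence $\im\overline{\gamma}=V$, and $\gamma$ is the required epimorphism extending $\phi$. The crux is thus recognizing that the constraint $\mu\,\overline{\gamma_0}=\overline{\phi}\,\mu$ simultaneously produces the defect $W$ and, through $\overline{\gamma_0}(W)\subseteq W$, supplies exactly the room needed to remove it.
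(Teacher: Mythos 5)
Your argument is correct, but it follows a genuinely different route from the paper's. The paper splits off a maximal $p$-bounded (elementary) summand, writing $T=A\oplus B$ with $pT=pA$, defines $\mu$ on $N=pA\oplus B$ as $\phi$ on $pA$ and the identity on $B$, observes that $\mu$ does not decrease heights and that $T/N\cong A/pA$ is elementary, hence totally projective, and then invokes the extension theorem for height-non-decreasing maps with totally projective cokernel to get $\gamma$; surjectivity is then a short direct computation using $T[p]=A[p]\oplus B\leq N\leq\gamma(T)$. You instead build the extension by hand from a basis of $T/pT$ (your well-definedness check via the relations $\sum_i pm_ix_i=t$ is sound, since $py_i=\phi(px_i)$), and then repair surjectivity by pure linear algebra over $\Z(p)$: the commuting square $\mu\,\overline{\gamma_0}=\overline{\phi}\,\mu$ with $\mu\colon T/pT\to pT/p^2T$ does give $\im\overline{\gamma_0}+W=T/pT$ and $\overline{\gamma_0}(W)\subseteq W$, the perturbations available by changing $y_i$ to $y_i+t_i$ with $t_i\in T[p]$ realize every linear map $T/pT\to W$, and since $W$ is a subspace of a vector space you can extend $\mathrm{id}_W-\overline{\gamma_0}|_W$ to all of $T/pT$; the corrected map still satisfies the commuting identity (your $\lambda$ lands in $\ker\mu$), so its reduction is onto. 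What each approach buys: the paper's proof is shorter but leans on nontrivial machinery (existence of a maximal elementary summand via basic subgroups, total projectivity of elementary groups, and the height-preserving extension theorem strengthening \cite[Proposition 113.3]{F1}); yours is longer but essentially self-contained, using only generators and relations and vector-space arguments, and it makes explicit exactly where the freedom to achieve surjectivity comes from. Both reduce to $k=1$ by the same induction.
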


\begin{proof} By an easy induction, there is no loss of generality in assuming $k=1$.

Suppose $T=A\oplus B$, where $B$ is a maximal $p$-bounded summand of $T$. Let $N=p A\oplus B$ and $\mu:N \to N$ be $\phi$ on $p A$ and the identity on $B$; so, $\mu(N)=N$. Clearly, $\mu$ does not decrease heights computed in $T$. Since $T/N\cong A/pA$ is an elementary $p$-group, it is necessarily totally projective. Therefore, $\mu$ must extend to a homomorphism $\gamma:T\to T$. Certainly, as $\gamma$ is an extension of $\phi$, the only question is whether it is an epimorphism.

To show this, let $x\in T$. Thus, $$px\in pT=\phi(pT)=\gamma(pT).$$ Let $z\in T$ satisfy $p\gamma(z)=\gamma (pz)=px$. It now routinely follows that $$x- \gamma (z)\in T[p]=A[p]\oplus B\leq N\leq \gamma(T).$$ Suppose that $x-\gamma(z)=\gamma(y)$. Consequently, $x=\gamma(y+z)\in \gamma(T)$, as required.
\end{proof}

We are thus prepared to prove the following assertion.

\begin{proposition}\label{dirsum} Suppose $A$ is a relatively Hopfian $p$-group and $B$ is a finite $p$-group. Then, $A\oplus B$ is also relatively Hopfian.
\end{proposition}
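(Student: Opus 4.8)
The plan is to argue by contradiction through the reformulation in Proposition~\ref{characterize}. Writing $G=A\oplus B$, it suffices to rule out a surjection $\gamma\colon G\to G\oplus Z$ with $Z\cong\Z(p)$ or $Z\cong\qc$; for a prime $q\neq p$ the groups $G\oplus\Z(q)$ and $G\oplus\Z(q^\infty)$ carry $q$-torsion and so cannot be images of the $p$-group $G$, hence condition (b) of Proposition~\ref{characterize} is automatic at every prime other than $p$. Fix $k<\o$ with $p^kB=0$, so that $p^kG=p^kA$. Two facts will drive everything: homomorphisms of $p$-groups never decrease heights, and, since $\gamma$ is onto, $\gamma(p^kG)=p^k(G\oplus Z)$. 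Consequently the restriction $\psi:=\gamma|_{p^kA}$ carries $p^kA$ \emph{onto} $p^k(G\oplus Z)=p^kA\oplus p^kZ$, the $B$- and $Z$-components of $\gamma(p^kA)$ being forced into $p^kB=\{0\}$ and $p^kZ$ respectively.

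For $Z\cong\qc$ the argument is clean and self-contained. Here $p^kZ=Z$, so $\psi\colon p^kA\twoheadrightarrow p^kA\oplus Z$. Composing with the two projections yields an epimorphism $\rho\colon p^kA\to p^kA$ and an epimorphism $\tau\colon p^kA\to Z$. By Lemma~\ref{extending} the map $\rho$ extends to an epimorphism $\sigma\colon A\to A$, and since $\qc$ is injective, $\tau$ extends to some $\hat\tau\colon A\to Z$. I then set $\Psi:=(\sigma,\hat\tau)\colon A\to A\oplus Z$. On $p^kA$ one has $\Psi=\psi$, whence $\im\Psi\supseteq\im\psi=p^kA\oplus Z$, so in particular $Z\subseteq\im\Psi$; together with the surjectivity of $\mathrm{pr}_A\Psi=\sigma$ this forces $\Psi$ onto. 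Thus $A\to A\oplus\qc$ is surjective, contradicting that $A$ is relatively Hopfian (again via Proposition~\ref{characterize}).

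For $Z\cong\Z(p)$ this device collapses, since $p^kZ=\{0\}$ and the new copy of $\Z(p)$ sits at height $0$ and is annihilated by $p^k$; this is the genuinely delicate case. Here I would instead project off the new summand: if $\pi\colon G\oplus Z\to G$ is the projection, then $\delta:=\pi\gamma\colon G\to G$ is an epimorphism with $G/\ker\delta\cong G$, and any $x$ with $\gamma(x)$ a generator of the height-$0$ summand $Z$ lies in $\ker\delta$ and satisfies $|x|_G=0$. The goal is then to transfer this height-$0$ witness from $G=A\oplus B$ down to $A$, i.e. to produce an epimorphism $A\to A$ whose kernel meets height $0$; Lemma~\ref{hzero} applied to $A$ then delivers the contradiction. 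The purely Hopfian case is already disposed of: if $A$ is Hopfian then $A\oplus B$ is Hopfian by \cite{GG,H1}, hence relatively Hopfian, so only the case where $A$ is relatively but \emph{not} Hopfian remains, and there $p^\o A\neq\{0\}$ by Corollary~\ref{ulmhopfian}.

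The hard part is precisely this transfer across the finite summand $B$. The height-$0$ element $x=a_0+b_0\in\ker\delta$ may be witnessed in the $B$-direction, and the corner $\delta_{AA}\colon A\to A$ need be neither onto nor possessed of a height-$0$ kernel vector, because the off-diagonal maps $A\to B$ and $B\to A$ couple the two summands (and $\Hom(A,B)$, $\Hom(B,A)$ need not vanish, so Theorem~\ref{prop218} does not apply). My intended decoupling uses $p^kB=0$ to see that $\delta_{AA}|_{p^kA}\colon p^kA\to p^kA$ is already onto, repairs $\delta_{AA}$ to a genuine epimorphism of $A$ via Lemma~\ref{extending}, and then exploits the finiteness of $B$ to feed a height-$0$ kernel vector back into this epimorphism; making this last step rigorous, without the height-$0$ phenomenon being lost into $B$, is the crux. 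An alternative and possibly cleaner route to the whole statement is to argue through the characterization in Theorem~\ref{bigresult}, showing that any isomorphism $(A\oplus B)/H\cong A\oplus B$ forces $H$ to be a bounded subgroup of $p^\o(A\oplus B)=p^\o A$ by transferring $H$ to a subgroup of $A$ using the finiteness of $B$.
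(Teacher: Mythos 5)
Your treatment of the case $Z\cong\Z(p^\infty)$ is correct and complete: extending the two coordinates of $\psi\colon p^kA\twoheadrightarrow p^kA\oplus Z$ separately (Lemma~\ref{extending} for the $A$-coordinate, injectivity of $\Z(p^\infty)$ for the $Z$-coordinate) and then checking surjectivity of $(\sigma,\hat\tau)$ is a perfectly valid variant of what the paper does, which instead applies Lemma~\ref{extending} once to the single $p$-group $A\oplus Z$ and kills $Z$. But the case $Z\cong\Z(p)$ is a genuine gap, and you say so yourself: the step of ``feeding a height-$0$ kernel vector back'' into a repaired epimorphism of $A$ is exactly where the argument is missing, and it is not a routine repair. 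The obstruction you name is real --- the new $\Z(p)$ summand lives at height $0$ and dies under $p^k$, so it is invisible to everything you can see through $p^kA$, and the height-$0$ witness in $\ker\delta$ may genuinely sit in the $B$-direction. Nothing in your sketch rules that out, and Lemma~\ref{hzero} applied to $A$ therefore never gets off the ground. The alternative route via Theorem~\ref{bigresult} that you mention at the end is likewise only a statement of intent.

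For comparison, the paper closes this case by a device you do not have: it first normalizes so that $(p^kA)[p]$ is \emph{essential} in $A$, by splitting off a maximal $p^k$-bounded summand $C$ of $A$ (finite, since a relatively Hopfian $p$-group is $p$-semi-standard) and absorbing it into $B$. After this, $\phi$ restricts to a surjection $p^kA\to p^kA$, which extends by Lemma~\ref{extending} to an epimorphism $\gamma\colon A\to A$ whose kernel $K$ is, by Theorem~\ref{bigresult}, a bounded subgroup of $p^\o A\subseteq p^kA$; hence $K$ is also the kernel of $\phi\restriction_{p^kA}$, and $\phi$ induces a map $\mu\colon A\oplus B\cong (A/K)\oplus B\to A\oplus B\oplus Z$ that is injective on $A$ precisely because $(p^kA)[p]$ is essential there. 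The punchline is then a counting argument on the finite quotient $(A\oplus B\oplus Z)/\mu(A)$: it contains a copy of $B\oplus Z$ yet is a homomorphic image of $B$, which is impossible since $|B|<|B\oplus Z|$. The essentiality normalization and the order count are the two ingredients your sketch lacks; without something playing their role, the $\Z(p)$ half of the proposition remains unproved.
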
	

\begin{proof} Thanks to Proposition~\ref{characterize} above, we need to show that there is no surjective homomorphism $\phi: A\oplus B\to A\oplus B\oplus Z$, where $Z$ is either isomorphic to $\Z(p)$ or to $\Z(p^\infty)$. In this light, we will assume such a map $\phi$ exists and derive a contradiction.

To this target, suppose $k\geq 1$ with $p^k B=\{0\}$ and $A=A'\oplus C$, where $C$ is a maximal $p^k$-bounded summand of $A$. Since $A$ is relatively Hopfian, the Ulm factor of $A$, $U_n(A)$ is finite for all $n<\omega$; so, in particular, $C$ is finite. Note that $A'$, as a direct summand of a relatively Hopfian group, also satisfies that property applying Proposition~\ref{prop3}. Replacing now $A$ by $A'$ and $B$ by $B\oplus C$, we may assume with no loss in generality that $A[p]=(p^k A)[p]$.

\medskip

\noindent{\bf Case 1:} Suppose $Z=\Z(p)$: it then follows that $\phi$ restricts to a surjection
\[
p^k A=p^k(A\oplus B) \to p^k (A\oplus B\oplus Z)= p^k A.
\]
According to Lemma~\ref{extending}, this extends to an epimorphism $\gamma:A\to A$. Therefore, referring to Theorem~\ref{bigresult} listed below, the kernel of $\gamma$ is a bounded subgroup of $p^\omega A\subseteq p^k A$, so that both $\gamma$ and $\phi\restriction_{p^k A}$ have the same kernel, say $K$.

Note that $\gamma$ induces an isomorphism $A/K\cong A$. Since $\phi(K)=\{0\}$, there is an induced surjection
\[
\mu: A\oplus B \cong A/K\oplus B\to A\oplus B\oplus Z,
\]
which is the identity on $p^k A$. Since $p^k A[p]$ is essential in $A$, it follows that $\mu$ is an injection on $A$, i.e., $$A\cong \mu(A)=:A''\leq A\oplus B\oplus Z.$$
Since $$A''[p]=\mu(A[p])=\mu(p^k A[p])= p^k A[p]\leq A\oplus B\oplus Z,$$ we can conclude that in $A\oplus B\oplus Z$ we have $A''\cap (B\oplus Z)=\{0\}$. Consequently, $B\oplus Z$ maps injectively into $(A\oplus B\oplus Z)/A''$.
Thus, this quotient has order at least as large as the order of $B\oplus Z$.

However, $\mu$ induces a surjection
\[
B\cong (A\oplus B)/A\to \mu(A\oplus B)/\mu(A)= (A\oplus B\oplus Z)/A'',
\]
and since $B$ has strictly smaller order than $B\oplus Z$, we have our contradiction.

\medskip

\noindent{\bf Case 2:} Suppose $Z=\Z(p^\infty)$: in this case, $\phi$ restricts to a surjection

\[
p^k A=p^k (A\oplus B)\to p^k (A\oplus B\oplus Z)=p^k A\oplus Z.
\]
Let $\gamma: p^k A\oplus Z\to p^k A\oplus Z$ be $\phi$ on the first direct summand and 0 on second; so, $\gamma$ is obviously a surjection $p^k (A\oplus Z)\to p^k (A\oplus Z)$. Owing to Lemma~\ref{extending}, this extends to a surjection $\kappa:A\oplus Z\to A\oplus Z$. But since $\kappa(Z)=\{0\}$, it restricts to a surjection $\kappa\restriction_A : A\to A\oplus Z$, contradicting the assumption that $A$ is relatively Hopfian, as expected.
\end{proof}

As an important consequence, we may deduce the following criterion.

\begin{corollary}
Suppose $A$ is a $p$-group and $B$ is a direct sum of cyclic $p$-groups. Then, $A\oplus B$ is relatively Hopfian if, and only if, $A$ is relatively Hopfian and $B$ is finite.
\end{corollary}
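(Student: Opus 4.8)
The plan is to deduce the forward implication directly from Proposition~\ref{dirsum} and to reduce the reverse implication to a single structural fact about direct sums of cyclic groups. For the ``if'' direction, suppose $A$ is relatively Hopfian and $B$ is finite. A finite direct sum of cyclic $p$-groups is simply a finite $p$-group, so Proposition~\ref{dirsum} applies verbatim and yields that $A\oplus B$ is relatively Hopfian; no further work is needed here.

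For the ``only if'' direction, assume $A\oplus B$ is relatively Hopfian. Since $\mathcal{RH}$ is closed under direct summands by Proposition~\ref{prop3}, both $A$ and $B$ are relatively Hopfian; thus $A$ is relatively Hopfian as claimed, and it remains only to force $B$ to be finite. Taking the contrapositive, I would isolate the following claim as the heart of the matter: \emph{every infinite direct sum of cyclic $p$-groups $C$ admits a surjection $C\to C\oplus\Z(p)$, and hence, by Definition~\ref{newone}, fails to be relatively Hopfian.} Granting this, a relatively Hopfian $B$ that is a direct sum of cyclics cannot be infinite, so $B$ is finite and we are done.

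To prove the claim I would first extract a convenient countable summand. Writing $C=\bigoplus_{i\in I}\langle x_i\rangle$ with each $x_i$ of order $p^{m_i}\ge p$, the group $C$ being infinite means $I$ is infinite, so one of two things happens: either some order $p^{m}$ occurs infinitely often, or the exponents $m_i$ take infinitely many distinct values. In the first case I pick $C_0=\bigoplus_{k}\langle x_{i_k}\rangle$ with all $m_{i_k}=m$; in the second I pick indices with $m_{i_1}<m_{i_2}<\cdots$. In either case $C_0$ is a direct summand of $C$ whose cyclic summands can be listed with \emph{non-decreasing} orders. Writing $C_0=\bigoplus_{k\ge1}\langle u_k\rangle$ and $C_0\oplus\Z(p)=\bigoplus_{k\ge1}\langle v_k\rangle\oplus\langle z\rangle$ with $|v_k|=|u_k|$ and $|z|=p$, I define the ``shift'' by $\phi(u_1)=z$ and $\phi(u_k)=v_{k-1}$ for $k\ge2$. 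The non-decreasing condition guarantees $|v_{k-1}|\mid|u_k|$, so $\phi$ is a well-defined homomorphism, and its image contains $z$ together with every $v_j$, hence all of $C_0\oplus\Z(p)$, so $\phi$ is onto. Extending by the identity on a complement $C=C_0\oplus C_1$ produces a surjection $C\to C\oplus\Z(p)$, establishing the claim.

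The step I expect to be the main obstacle is precisely the extraction of $C_0$ with orders that can be arranged to be non-decreasing: an arbitrary infinite direct sum of cyclics cannot in general be reindexed so that \emph{all} of its summands have non-decreasing orders (for instance when two distinct orders each recur infinitely often), which is why the case split above --- pulling out either a constant-order summand $\Z(p^m)^{(\omega)}$ or a strictly-increasing-order summand --- is needed to make the shift well-defined. Everything after that is a routine verification of well-definedness and surjectivity, and the final contradiction follows by the closure of $\mathcal{RH}$ under summands in Proposition~\ref{prop3}.
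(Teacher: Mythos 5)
Your proof is correct, and while the ``if'' direction coincides with the paper's (a one-line appeal to Proposition~\ref{dirsum}), your ``only if'' direction takes a genuinely different and more self-contained route. The paper simply combines Proposition~\ref{prop3} with a forward reference to Proposition~\ref{sums}, whose own proof runs through Corollary~\ref{ulmhopfian} (hence Theorem~\ref{bigresult} and Lemma~\ref{mequalzero}) to show that a relatively Hopfian separable $p$-group is Hopfian, and then invokes the classical fact that an unbounded direct sum of cyclics is never Hopfian. You instead prove directly that an infinite direct sum of cyclic $p$-groups admits a surjection onto itself plus $\Z(p)$, via a Bernoulli-type shift on a carefully extracted countable summand $C_0$ whose cyclic constituents have non-decreasing orders; your case split (a single order recurring infinitely often versus infinitely many distinct orders) is exactly what is needed to make the shift well defined, and the divisibility check $\lvert v_{k-1}\rvert \mid \lvert u_k\rvert$ is the right verification. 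What your approach buys is independence from the machinery of Section~3: it uses only Definition~\ref{newone} and closure under summands, and it avoids the forward reference to a result proved two sections later. What the paper's approach buys is brevity and the stronger conclusion packaged in Proposition~\ref{sums} (equivalence with ordinary Hopficity), which your argument does not need and does not reprove.
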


\begin{proof} The \lq\lq if\rq\rq  \ part follows at once from Proposition~\ref{dirsum}.

Concerning the\lq\lq only if\rq\rq \ part, it follows from a direct combination of Propositions~\ref{prop3} and \ref{sums}.
\end{proof}
	
The following result is, however, rather unsurprising.
	
\begin{proposition}\label{allrel} Suppose $G$ is a torsion group with primary decomposition $G = \bigoplus\limits_{p \in \mathcal P}T_p$. Then, $G$ is in one of our four classes if, and only if, each primary component $T_p$ is in the corresponding class.
\end{proposition}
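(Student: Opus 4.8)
The plan is to reduce every relevant map to its action on the individual primary components, using the elementary fact that a homomorphism between torsion groups always carries the $p$-primary component into the $p$-primary component. The forward implication is immediate: each $T_p$ is a direct summand of $G$, with complement $\bigoplus_{q\ne p}T_q$, so whenever $G$ lies in one of the four classes, Proposition~\ref{prop3} forces each $T_p$ to lie in the same class.

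For the converse I would first isolate the following reduction principle. Suppose $\psi\colon G\to S$ is a homomorphism whose target $S$ is again torsion with primary decomposition $S=\bigoplus_p S_p$. Then $\psi(T_p)\subseteq S_p$, whence $\psi(G)=\bigoplus_p\psi(T_p)$; consequently $\psi$ is surjective if and only if each restriction $\psi_p:=\psi\restriction_{T_p}\colon T_p\to S_p$ is surjective, and moreover $\Ker\psi=\bigoplus_p\Ker\psi_p$. With this in hand the classes $\mathcal H$, $\mathcal{RH}$ and $\mathcal{DF}$ are dispatched uniformly. For $\mathcal H$, a surjective endomorphism $\phi$ of $G$ restricts to a surjective endomorphism $\phi_p$ of each $T_p$ (here $S=G$); Hopficity of $T_p$ makes each $\phi_p$ injective, and the kernel formula gives $\Ker\phi=\{0\}$. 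For $\mathcal{RH}$, a surjection $\phi\colon G\to G\oplus A$ has torsion target, so $A$ is torsion; writing $A=\bigoplus_p A_p$ and applying the principle with $S=G\oplus A$ yields surjections $T_p\to T_p\oplus A_p$, whence relative Hopficity of $T_p$ forces $A_p=\{0\}$ for every $p$, i.e.\ $A=\{0\}$. The class $\mathcal{DF}$ is identical, an isomorphism $G\cong G\oplus A$ restricting to isomorphisms $T_p\cong T_p\oplus A_p$.

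The one case requiring an additional ingredient is $\mathcal{WH}$, and this is where I expect the only real subtlety. Given a surjective endomorphism $\phi$ of $G$ with pure kernel $J$, the reduction principle again provides surjections $\phi_p\colon T_p\to T_p$ with $J=\bigoplus_p J_p$, where $J_p=J\cap T_p=\Ker\phi_p$. To invoke weak Hopficity of $T_p$ I must know that each $J_p$ is pure in $T_p$. This follows from the fact that purity in a direct sum of primary groups is detected componentwise: since for a $p$-group $T_p$ one has $nT_p=p^{v}T_p$ (with $v$ the $p$-adic value of $n$) and multiplication by any integer prime to $p$ is an automorphism of both $T_p$ and $J_p$, the equality $nG\cap J=nJ$ decomposes as the family of equalities $nT_p\cap J_p=nJ_p$; thus $J$ pure in $G$ is equivalent to each $J_p$ pure in $T_p$. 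Weak Hopficity of $T_p$ then gives $J_p=\{0\}$ for all $p$, so $J=\{0\}$. I would note finally that, because infinitely many primes may occur, the finite splitting result Theorem~\ref{prop218} does not apply directly; it is precisely the componentwise reduction, together with the purity observation, that replaces it in the general torsion setting.
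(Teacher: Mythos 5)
Your proof is correct and follows essentially the same route as the paper's: the forward direction via closure under direct summands (Proposition~\ref{prop3}), and the converse by restricting the relevant surjection to primary components. The only differences are cosmetic --- the paper invokes Proposition~\ref{characterize} to reduce the complement $A$ to a single cocyclic $p$-group rather than decomposing a general torsion $A$, and it treats only $\mathcal{RH}$ in detail, leaving the other three cases (including your componentwise purity check for $\mathcal{WH}$, which is the one point genuinely needing an argument) to the reader.
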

	
\begin{proof} We prove this for $\mathcal {RH}$ and leave the other three statements to the reader for inspection. If $G$ is relatively Hopfian then, since each primary component $T_p$ is a direct summand of $G$, it follows from Proposition \ref{prop3} that each $T_p$ is relatively Hopfian.
		
Conversely, suppose that each $T_p$ is relatively Hopfian, but $G$ is {\it not} relatively Hopfian. Thus, thanks to Proposition~\ref{characterize}, there is a prime $p$ and a surjective homomorphism $\gamma:G\to G\oplus Z$, where $Z$ is a non-zero $p$-group. It now follows that $\gamma(T_p)=T_p\oplus Z$, so that $T_p$ is not relatively Hopfian, as needed.
\end{proof}
	
We now include for the convenience of the reader and completeness of exposition a proof of the following well-known and helpful fact.
	
\begin{proposition}\label{size}
If $T$ is a $p$-group of finite $p$-rank and $B$ is a subgroup of $T$, then $C:=T/B$ also has finite $p$-rank, which satisfies the inequality $r_p(C)\leq r_p(T)$.
\end{proposition}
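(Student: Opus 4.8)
The plan is to analyze the $p$-socle of the quotient $C=T/B$ and to compare it directly with the $p$-socle of $T$. Set $S=\{x\in T: px\in B\}$; this is a subgroup of $T$ containing $B$, and one checks immediately that the $p$-socle of $C$ is exactly $C[p]=(T/B)[p]=S/B$ (an element $x+B$ has order dividing $p$ precisely when $px\in B$). Moreover $S[p]=T[p]$, since for $x\in S$ the condition $px=0$ forces $px\in B$ automatically, so the $p$-torsion of $S$ is just the $p$-torsion of $T$. Hence $\dim_{\Z/p\Z}S[p]=r_p(T)$, and because $r_p(C)=\dim_{\Z/p\Z}C[p]=\dim_{\Z/p\Z}(S/B)$, it suffices to prove the inequality
\[
\dim_{\Z/p\Z}(S/B)\le r_p(T)=\dim_{\Z/p\Z}S[p].
\]

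Next I would observe that $pS\subseteq B$ by the very definition of $S$, so $S/B$ is an elementary $p$-group and is a homomorphic image of $S/pS$. Consequently $\dim_{\Z/p\Z}(S/B)\le\dim_{\Z/p\Z}(S/pS)$, and the whole assertion reduces to the purely local fact that, for the $p$-group $S$, one has $\dim_{\Z/p\Z}(S/pS)\le\dim_{\Z/p\Z}(S[p])$.

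The key step is establishing this last inequality, and I would do it by passing to a basic subgroup $B_S$ of $S$. Since $S/B_S$ is divisible we have $S=B_S+pS$, whence the inclusion $B_S\hookrightarrow S$ induces an isomorphism $S/pS\cong B_S/(B_S\cap pS)$; and the purity of $B_S$ gives $B_S\cap pS=pB_S$, so that $S/pS\cong B_S/pB_S$. As $B_S$ is a direct sum of cyclic $p$-groups, the dimensions of $B_S/pB_S$ and $B_S[p]$ coincide (each cyclic summand contributes exactly one to each), while $B_S[p]\subseteq S[p]$. Combining these, $\dim_{\Z/p\Z}(S/pS)=\dim_{\Z/p\Z}(B_S[p])\le\dim_{\Z/p\Z}(S[p])$. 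Feeding this back through the reductions above yields $r_p(C)\le r_p(T)$, and in particular $C$ has finite $p$-rank. The only genuine subtlety is the socle inequality for $S$, where the purity of the basic subgroup is exactly what makes $B_S\cap pS=pB_S$ valid; everything else is bookkeeping around the two identifications $C[p]=S/B$ and $S[p]=T[p]$.
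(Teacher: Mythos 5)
Your argument is correct, but it takes a genuinely different route from the paper's. The paper argues by contradiction: assuming $r_p(C)>n:=r_p(T)$, it picks a \emph{finite} subgroup $T'\leq T$ whose image $C'$ has $p$-rank $n+1$, and then compares ranks through the induced surjection $T'/pT'\to C'/pC'$, using only the elementary identity $r_p(F)=\dim(F/pF)$ for finite $F$; no basic subgroups appear. You instead work directly with the full preimage $S=\{x\in T: px\in B\}$ of the socle of $C$, identify $C[p]=S/B$ as a quotient of $S/pS$, and reduce everything to the inequality $\dim_{\Z/p\Z}(S/pS)\leq\dim_{\Z/p\Z}(S[p])$, which you prove by passing to a basic subgroup of $S$. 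Both proofs are sound. Your version is direct rather than indirect, isolates a clean and reusable general fact (for any abelian $p$-group $G$, $\dim(G/pG)\leq\dim G[p]$), and in fact never uses the finiteness of $r_p(T)$, so it establishes the cardinal inequality $r_p(C)\leq r_p(T)$ in full generality; the price is an appeal to Kulikov's existence theorem for basic subgroups, a heavier tool than the paper's purely finite bookkeeping requires. One small presentational point: your justification of $S[p]=T[p]$ is phrased backwards --- the direction that needs saying is that any $x\in T[p]$ satisfies $px=0\in B$ and hence lies in $S$; the reverse containment is trivial. This is a wording issue only, not a gap.
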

	
\begin{proof} Suppose $r_p(C)> n:=r_p(T)$. Let $\pi:T\to C$ be the canonical epimorphism. We can plainly find a finite subgroup $T'\leq T$ such that $C':=\pi (T')$ has $p$-rank $n+1$. It then follows that $\pi$ induces a surjection $T'/pT'\to C'/pC'$.
		Since $T'$ and $C'$ are direct sums of cyclic groups, it quickly follows that
		\[
		n\geq r_p(T')=r_p(T'/pT')\geq r_p(C'/pC')=r_p(C')=n+1,
		\]
giving our desired contradiction.
\end{proof}
	
The following assertion parallels the well-known (and easily checked) fact that {\it a torsion-free group of finite rank is Hopfian}.
	
\begin{corollary}\label{finiteprank}
A $p$-group of finite ($p$-)rank is relatively Hopfian.
\end{corollary}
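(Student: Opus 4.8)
The plan is to derive the result directly from the two immediately preceding propositions: the reformulation of relative Hopficity in Proposition~\ref{characterize} and the rank inequality in Proposition~\ref{size}. Let $G$ be a $p$-group with $r_p(G)=n<\omega$. By Proposition~\ref{characterize}, it suffices to show that for \emph{every} prime $q$ there is no surjective homomorphism $\gamma:G\to G\oplus Z$ with $Z\cong \Z(q)$ or $Z\cong \Z(q^\infty)$.

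First I would dispose of the primes $q\neq p$: since any homomorphic image of the $p$-group $G$ is again a $p$-group, the group $G\oplus Z$ (which carries non-trivial $q$-torsion) cannot be such an image, and so no surjection exists in these cases. Thus only the prime $q=p$ requires attention. For $q=p$, I argue by contradiction. Suppose $\gamma:G\to G\oplus Z$ is surjective with $Z\cong \Z(p)$ or $Z\cong \qc$. Then $G\oplus Z$ is a homomorphic image of $G$, so Proposition~\ref{size} gives $r_p(G\oplus Z)\leq r_p(G)=n$. On the other hand, $r_p(G\oplus Z)=r_p(G)+r_p(Z)=n+1$, because $Z$ has $p$-rank $1$ in both cases. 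The resulting inequality $n+1\leq n$ is absurd, so no such $\gamma$ exists, and $G$ is relatively Hopfian by Proposition~\ref{characterize}.

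There is no genuine obstacle here, as the substantive work has already been carried out in the two cited propositions; the argument is essentially a bookkeeping of $p$-ranks. The only points that merit a moment's care are the reduction to the prime $p$ (using that homomorphic images of $p$-groups remain $p$-groups) and the additivity of $p$-rank over the direct sum $G\oplus Z$ together with the elementary fact that $r_p(\Z(p))=r_p(\qc)=1$; both are immediate.
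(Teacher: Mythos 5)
Your argument is correct and is essentially the paper's own proof: both reduce via Proposition~\ref{characterize} to ruling out a surjection $G\to G\oplus Z$ with $Z$ a $p$-group of $p$-rank $1$, and then invoke Proposition~\ref{size} to get the contradiction $n+1\le n$. Your explicit dismissal of primes $q\ne p$ is a minor elaboration the paper leaves implicit.
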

	
\begin{proof}
Suppose $T$ is a $p$-group with $r_p(T)$ finite. If $T$ failed to be relatively Hopfian, owing to Proposition~\ref{characterize}, there would be a surjective homomorphism $\gamma:T\to T\oplus Z$, where $Z$ is a $p$-group of $p$-rank $1$. This would violate Proposition~\ref{size}, since $r_p(T\oplus Z)=r_p (T)+1$.
\end{proof}
	
What the last result says is that \textit{finitely co-generated} $p$-groups are always relatively Hopfian. We now completely characterize the divisible groups in our four classes.
	
\begin{proposition}\label{twosix} Suppose $D=D_0\oplus (\bigoplus_{p\in \mathcal P}D_p)$ is a divisible group with $D_0$ torsion-free and each $D_p$ a $p$-group.
		
(a) $D\in \mathcal {H}$ if, and only if, it is torsion-free of finite rank.
		
(b) $D\in \mathcal {RH}$, $\mathcal {WH}$ or $\mathcal {DF}$ if, and only if, $D_0$ has finite rank and, for every $p\in P$, $D_p$ has finite $p$-rank.
\end{proposition}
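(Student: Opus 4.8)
The plan is to reduce everything to results already proved, exploiting that a divisible group decomposes as $D = D_0 \oplus D_{\mathrm{tor}}$, where $D_{\mathrm{tor}} = \bigoplus_{p \in \mathcal P} D_p$ is torsion and $D_0$ is torsion-free divisible of some rank $n$ (so $D_0 \cong \Q^{(n)}$). Both summands have already been analysed for all four classes, so the proof is essentially a matter of selecting the correct prior statements and of treating the three classes in (b) simultaneously by sandwiching them between $\mathcal{RH}$ and $\mathcal{DF}$.

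For part (b), I would argue necessity and sufficiency separately. For necessity, note that by Theorem~\ref{containments} we have $\mathcal{RH}, \mathcal{WH} \subseteq \mathcal{DF}$, so membership of $D$ in any one of the three classes forces $D \in \mathcal{DF}$; applying Corollary~\ref{cor3} with trivial reduced part $R = \{0\}$ then yields at once that $D_0$ has finite rank and each $D_p$ has finite $p$-rank. For sufficiency, assume that rank condition and observe that it suffices to show $D \in \mathcal{RH}$, since then $D \in \mathcal{WH}$ and $D \in \mathcal{DF}$ follow from the same chain of inclusions. Here I would invoke Corollary~\ref{torsionsplit} (equivalently Theorem~\ref{prop218}, using that $\Hom(D_{\mathrm{tor}}, D_0) = \{0\}$, since a torsion group admits no nonzero map into a torsion-free one): thus $D = D_{\mathrm{tor}} \oplus D_0$ lies in $\mathcal{RH}$ as soon as both summands do. The torsion-free summand $D_0 \cong \Q^{(n)}$ has finite rank and is therefore Hopfian by the well-known fact recorded before Corollary~\ref{finiteprank}, hence relatively Hopfian; the torsion summand is handled by Proposition~\ref{allrel}, which reduces relative Hopficity of $D_{\mathrm{tor}}$ to that of each primary component $D_p$, and each $D_p$ has finite $p$-rank and so lies in $\mathcal{RH}$ by Corollary~\ref{finiteprank}.

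For part (a), sufficiency is immediate: a torsion-free divisible group of finite rank is in particular torsion-free of finite rank, hence Hopfian by the same well-known fact. For necessity I would show that Hopficity forbids any torsion: if some $D_p \neq \{0\}$, then $\Z(p^\infty)$ is a direct summand of $D$, and since multiplication by $p$ is a surjective endomorphism of $\Z(p^\infty)$ with kernel $\Z(p) \neq \{0\}$, the Prüfer group $\Z(p^\infty)$ fails to be Hopfian; as $\mathcal{H}$ is closed under direct summands (Proposition~\ref{prop3}), $D$ cannot be Hopfian either. Consequently every $D_p = \{0\}$, so $D = D_0$ is torsion-free, and its finite rank follows from Corollary~\ref{cor3} via $\mathcal{H} \subseteq \mathcal{DF}$.

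I do not anticipate a genuine obstacle; the substance lies entirely in the bookkeeping. The one point that requires care is the direction of the $\Hom$-vanishing hypothesis when splitting off the torsion-free part: one has $\Hom(D_{\mathrm{tor}}, D_0) = \{0\}$ but \emph{not} $\Hom(D_0, D_{\mathrm{tor}}) = \{0\}$ (indeed $\Q$ surjects onto $\Z(p^\infty)$), so Theorem~\ref{prop218} and Corollary~\ref{torsionsplit} must be applied with the torsion part playing the role of the factor $K$ and the torsion-free part that of $L$.
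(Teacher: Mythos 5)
Your proposal is correct and follows essentially the same route as the paper: necessity via Corollary~\ref{cor3} (through $\mathcal{H},\mathcal{RH},\mathcal{WH}\subseteq\mathcal{DF}$), sufficiency for (b) by showing $D\in\mathcal{RH}$ using Corollary~\ref{finiteprank}, Proposition~\ref{allrel} and Corollary~\ref{torsionsplit}, and (a) by noting $\Z(p^\infty)$ is not Hopfian while summands of Hopfian groups are Hopfian. Your extra remark about the direction of the $\Hom$-vanishing hypothesis is a correct and worthwhile precision that the paper leaves implicit.
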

	
\begin{proof} Consulting with Corollary~\ref{cor3}, if $D$ is in any of these classes, then all of these ranks are finite.
		
For part (a), it is easy to show that any torsion-free group of finite rank is Hopfian. And since, for any prime $p$, the group $\Z(p^\infty)$ is {\it not} Hopfian, all of the groups $D_p$ must equal $\{0\}$.
		
For part (b), suppose all these ranks are finite. By (a), the group $D_0$ is Hopfian, and hence relatively Hopfian. Applying Corollary~\ref{finiteprank}, each $D_p$ is relatively Hopfian, so Proposition~\ref{allrel} gives that $\oplus_{p\in \mathcal P} D_p$ is also relatively Hopfian. So, employing Corollary~\ref{torsionsplit}, the group $D$ is relatively Hopfian, and hence simultaneously weakly Hopfian and directly finite, as required.
\end{proof}
	
\section{Torsion Groups}\label{tsect}
	
We begin our discussion here with an important lemma, which somewhat refines the technique used in \cite{Ke}.
	
\medskip
	
Concretely, the following is true:
	
\begin{lemma}\label{mequalzero} Suppose $G$ is a $p$-semi-standard $p$-group and $H$ is a subgroup of $G$ with $\min \val H = m$, a finite integer. If $\bar{G} = G/H \cong G$, then $m =0$.
\end{lemma}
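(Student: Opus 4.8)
The plan is to compare the Ulm--Kaplansky invariants of $G$ and $\bar G=G/H$ level by level, exploiting that the isomorphism $\bar G\cong G$ forces $f_n(\bar G)=f_n(G)$ for every $n$, while $p$-semi-standardness makes all these numbers finite. I argue by contradiction, supposing $m\ge 1$. The key auxiliary object is the full preimage of $H$ under multiplication by $p$,
\[
P=\{x\in G:\ px\in H\},
\]
which contains both $G[p]$ and $H$. Since $\min\val H=m$ gives $H\subseteq p^mG$, for every $n\le m$ one has $p^n\bar G=p^nG/H$, and a direct socle computation yields, for $n<m$,
\[
f_n(\bar G)=\dim_{\mathbb F_p}\ (P\cap p^nG)/(P\cap p^{n+1}G).
\]

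Next I set up the natural comparison map. The inclusion $(p^nG)[p]\hookrightarrow P\cap p^nG$ induces, for each $n<m$, a homomorphism
\[
\iota_n:\ (p^nG)[p]/(p^{n+1}G)[p]\ \longrightarrow\ (P\cap p^nG)/(P\cap p^{n+1}G),
\]
and one checks it is injective: a socle element $s\in(p^nG)[p]$ maps to $0$ exactly when $s\in p^{n+1}G$, i.e.\ precisely when it already represents $0$ on the left. Hence $f_n(G)\le f_n(\bar G)$ for $n<m$. Because $\bar G\cong G$ we have $f_n(G)=f_n(\bar G)$, and because $G$ is $p$-semi-standard these dimensions are finite; therefore each $\iota_n$ $(n<m)$ is an \emph{isomorphism}, in particular surjective. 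This surjectivity --- the passage from ``equal finite invariants'' to ``every socle class in the quotient is already represented by a genuine socle element of $G$'' --- is the crux of the argument and the step I expect to demand the most care, since it is where finiteness and the isomorphism are jointly used.

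Finally I extract the contradiction from surjectivity at the top admissible level $n=m-1$. Choose $h\in H$ with $\val h=m$; since $h\in p^mG$ write $h=p^mh'$, and note that $\val h=m$ forces $\val{h'}=0$ and, more generally, $\val{p^kh'}=k$ for $0\le k\le m$, because a height jump anywhere along this chain would push $\val h$ above $m$. Then $y:=p^{m-1}h'$ lies in $P\cap p^{m-1}G$, as $py=p^mh'=h\in H$. By surjectivity of $\iota_{m-1}$ its class is represented by some $s\in(p^{m-1}G)[p]$, so $t:=y-s$ lies in $P\cap p^mG$. But now $t\in p^mG$ while $pt=py-ps=h-0=h$, whence $\val h=\val{pt}\ge \val t+1\ge m+1$, contradicting $\val h=m$. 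This contradiction shows $m=0$, as asserted.
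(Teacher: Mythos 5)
Your proof is correct and rests on the same idea as the paper's: both compare the finite Ulm--Kaplansky invariant $f_{m-1}$ of $G$ and of $\bar G=G/H$, and both locate the contradiction in an element of height $m-1$ whose $p$-multiple is a minimal-height element of $H$. The paper exhibits this element directly as an extra independent socle class forcing $f_{m-1}(\bar G)\geq f_{m-1}(G)+1$, whereas you package the same count as surjectivity of the natural injection $\iota_{m-1}$ and then extract the height contradiction $\val{h}\geq m+1$; the two executions are interchangeable.
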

	
\begin{proof} Since $\min \val H = m$, we must have $H \leq p^mG$. We claim $m= 0$: suppose, for a contradiction, that $m > 0$. Let $y \in H$ be chosen so that $\val y_G = m$ and $x \in G$ satisfies $px = y$ with $\val x = m-1$. Let $\{z_1, \dots , z_j\} \subseteq (p^{m-1}G)[p]$ be a maximal linearly independent set modulo $(p^mG)[p]$, so that $f_{m-1}(G) = j$, a finite integer by the hypothesis that $G$ is semi-standard. Letting $\bar{z_1} = z_1 +H, \dots , \bar{z_j} = z_j + H$, then these elements and the element $\bar{x} = x + H$ are all contained in $(p^{m-1}\bar{G})[p]$. To obtain the desired contradiction, it will suffice to show that the elements $\bar{x}, \bar{z_1}, \dots \bar{z_j}$ are linearly independent modulo $p^m\bar G$, since this would show that $$f_{m-1}(\bar{G}) \geq j+1 > j= f_{m-1}(G)$$ which is manifestly wrong since, by hypothesis, $\bar{G} \cong G$.
		
Suppose then that $$n_1 \bar{z_1} + \dots + n_j \bar{z_j} + k\bar{x} \in p^m\bar{G} = p^mG/ H,$$ because $H \leq p^mG$. We need to show that $p$ divides each of the coefficients $n_1, \dots , n_j, k$. Now, $$n_1 \bar{z_1} + \dots + n_j \bar{z_j} + k\bar{x} = \bar{w}$$ for some $w \in p^mG$. So, for some $h\in H$, we have  $$n_1z_1 + \dots + n_jz_j + kx = w+h \in p^mG+H=p^m G.  \quad \quad (1)$$ Replacing $w$ by $w+h$, we may assume $h=0$.
		
Multiplying by $p$, we get $$pkx = ky = pw \in p^{m+1}G.$$ If $p \nmid k$, then $\val y = \val {ky} \geq m+1$ - contrary to the definition of $y$. So, $k = pk'$ for some integer $k'$. Substituting back into (1), we have $$ n_1z_1 + \dots + n_jz_j = w - pk'x = w - k'y \in p^mG$$ and the linear independence of the $z_i$ modulo $p^mG[p]$ now gives $p \mid n_i$ for $i = 1, \dots j$, as required.   	
\end{proof}
	
We are now in a position to prove a principal result clarifying the relationship between Hopfian and relatively Hopfian $p$-groups.
	
\begin{theorem}\label{bigresult}
Suppose $G$ is a $p$-group. Then, $G$ is relatively Hopfian if, and only if, whenever $H$ is a subgroup of $G$ such that $G/H\cong G$, then $H$ is a bounded subgroup of $p^\o G$.
\end{theorem}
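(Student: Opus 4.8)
The plan is to prove both implications by contraposition, leaning on Proposition~\ref{characterize} (which reduces relative non-Hopficity to the existence of a surjection onto $G\oplus \Z(p)$ or $G\oplus \Z(p^\infty)$) together with Lemmas~\ref{hzero} and~\ref{mequalzero}. On the relatively Hopfian side I may assume $G$ is $p$-semi-standard, since a relatively Hopfian group is directly finite by Theorem~\ref{containments} and a directly finite $p$-group is $p$-semi-standard; this makes the hypothesis of Lemma~\ref{mequalzero} available whenever I need it.

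For the forward implication, assume $G$ is relatively Hopfian and $G/H\cong G$; I must show $H\subseteq p^\omega G$ and that $H$ is bounded. Composing $G\twoheadrightarrow G/H$ with a fixed isomorphism $G/H\cong G$ yields a surjection $\sigma:G\to G$ with $\ker\sigma=H$. The containment $H\subseteq p^\omega G$ is immediate from the two lemmas: if $\min\val H=0$ then Lemma~\ref{hzero} contradicts relative Hopficity, and if $\min\val H$ were any finite integer then Lemma~\ref{mequalzero} (applicable since $G$ is $p$-semi-standard) would force $\min\val H=0$, which is again excluded; hence $\min\val H=\infty$, i.e. $H\subseteq p^\omega G$.

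The substantive step is boundedness, and here is the device I would use. Suppose, for contradiction, that $H$ is unbounded. Every unbounded $p$-group surjects onto $\Z(p^\infty)$ (its basic subgroup is either unbounded, in which case it surjects onto $\Z(p^\infty)$ and one extends via the injectivity of $\Z(p^\infty)$, or bounded, in which case the group has a nonzero divisible summand). So choose an epimorphism $\alpha:H\to \Z(p^\infty)$ and, using that $\Z(p^\infty)$ is divisible and hence injective, extend it to $\beta:G\to \Z(p^\infty)$ with $\beta(H)=\Z(p^\infty)$. Now form $\psi:=(\sigma,\beta):G\to G\oplus \Z(p^\infty)$. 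I claim $\psi$ is onto: given $(a,t)$, pick $g_0$ with $\sigma(g_0)=a$, then correct the second coordinate by adding some $h\in H=\ker\sigma$ with $\beta(h)=t-\beta(g_0)$, which exists because $\beta$ maps $H$ onto $\Z(p^\infty)$; then $\psi(g_0+h)=(a,t)$. This surjection contradicts Proposition~\ref{characterize}, so $H$ must be bounded. I expect this $(\sigma,\beta)$-construction --- exploiting the injectivity of $\Z(p^\infty)$ and the freedom inside $\ker\sigma$ to adjust the second coordinate --- to be the crux of the argument; notably it requires neither semi-standardness nor the containment $H\subseteq p^\omega G$.

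For the reverse implication I again argue the contrapositive: assuming $G$ is not relatively Hopfian, I produce an $H$ with $G/H\cong G$ that fails to be a bounded subgroup of $p^\omega G$. By Proposition~\ref{characterize} there is a surjection $\phi:G\to G\oplus Z$ with $Z\cong \Z(p)$ or $Z\cong \Z(p^\infty)$; set $H:=\phi^{-1}(Z)$, so that composing $\phi$ with the projection $G\oplus Z\to G$ gives a surjection $G\to G$ with kernel $H$, whence $G/H\cong G$. If $Z\cong \Z(p^\infty)$, then $\phi(H)=Z$ forces $H$ to be unbounded. If $Z\cong \Z(p)$, then some $h\in H$ maps to a generator of $Z$, which has $p$-height $0$ in $G\oplus Z$; since $p$-height is non-decreasing under homomorphisms, $\val h_G=0$, so $H\not\subseteq p^\omega G$. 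In either case $H$ is not a bounded subgroup of $p^\omega G$, completing the contrapositive and hence the proof.
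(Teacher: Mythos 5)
Your proof is correct, and most of it coincides with the paper's: the reduction via Proposition~\ref{characterize}, the use of Lemmas~\ref{mequalzero} and~\ref{hzero} (with semi-standardness coming from direct finiteness) to force $H\subseteq p^{\omega}G$, and the analysis of $H=\phi^{-1}(Z)$ in the reverse direction are all exactly the paper's steps. The one genuine divergence is the boundedness step. The paper argues: if $H$ is unbounded, pick a proper basic subgroup $B$ of $H$, so $H/B$ is a non-zero divisible subgroup of $G/B$ and therefore splits off, giving $G/B=(H/B)\oplus (K/B)$ with $K/B\cong G/H\cong G$; this exhibits a surjection $G\to G\oplus (H/B)$ and contradicts relative Hopficity. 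You instead choose an epimorphism $H\to \Z(p^{\infty})$, extend it over $G$ by injectivity of $\Z(p^{\infty})$, and assemble $(\sigma,\beta)$ into a surjection $G\to G\oplus \Z(p^{\infty})$ by using elements of $\ker\sigma=H$ to adjust the second coordinate. Both devices rest on the same structural fact (an unbounded $p$-group has $\Z(p^{\infty})$ as an epimorphic image, equivalently a non-zero divisible quotient modulo a basic subgroup), so neither is deeper than the other; yours has the minor advantages of landing directly on one of the two test groups of Proposition~\ref{characterize} and, as you observe, of needing neither semi-standardness nor $H\subseteq p^{\omega}G$, while the paper's version stays entirely inside quotient-and-splitting arguments and never needs to extend a homomorphism.
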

	
\begin{proof} Considering sufficiency, suppose the later condition holds for all such subgroups $H$. If $G$ is not relatively Hopfian, then Proposition~\ref{characterize} enables us that there is a surjective homomorphism $\gamma: G\to G\oplus Z$, where $Z\cong \Z(p)$ or $Z\cong \Z(p^\infty)$. If $H=\gamma^{-1}(Z)$, then it follows that $G/H\cong G$, so that $H$ is a bounded subgroup of $p^\omega G$. Note that $\gamma$ restricts to a surjective homomorphism $H\to Z$, and since $H$ is bounded, we must have $Z\cong \Z(p)$. And since $H\subseteq p^\o G$, we must be that $$\gamma(H)\subseteq p^\o (G\oplus Z)=p^\o G\oplus \{0\},$$ which contradicts that $\gamma(H)=Z$.
		
Conversely, suppose $G$ is relatively Hopfian and $G/H \cong G$. If $H$ is unbounded, then it has a proper basic subgroup $B$ so that $H/B \neq \{\overline 0\}$ is divisible. But then $G/B = (H/B) \oplus (K/B)$ for some $K$ with $G \cong K/B$, contrary to $G$ being relatively Hopfian. So, $H$ is bounded.
		
Note also that, as $G$ is relatively Hopfian by assumption, it follows that $G$ is $p$-semi-standard. Now, if $H$ is contained in $p^{\omega}G$, we are finished. Otherwise, $m:=\min \val H<\omega$ and so applying Lemma \ref{mequalzero}, we must have $m = 0$. However, this would result, with the validity of Lemma \ref{hzero}, in $G$ not being relatively Hopfian. Thus, $H$ must be contained in $p^{\omega} G$, as required.
\end{proof}
	
We can now deduce the following interesting equivalence between the notions of Hopficity and relative Hopficity for certain classes of $p$-groups.
	
\begin{corollary}\label{ulmhopfian}
Suppose $G$ is a $p$-group such that $p^\omega G$ is Hopfian (in particular, if $p^\omega G$ is finite or $G$ is separable). Then, $G$ is relatively Hopfian if, and only if, it is Hopfian.
\end{corollary}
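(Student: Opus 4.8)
The plan is to prove the two implications separately, the forward one being immediate and the reverse one being where the hypothesis on $p^\omega G$ does its work. Since $\mathcal H \subseteq \mathcal{RH}$ by Theorem~\ref{containments}, every Hopfian group is automatically relatively Hopfian, so only the implication ``relatively Hopfian $\Rightarrow$ Hopfian'' requires argument.

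For that direction, I would start from a surjective endomorphism $\phi\colon G\to G$ with kernel $N$; to show $G$ is Hopfian I must force $N=\{0\}$. The induced isomorphism $G/N\cong G$ lets Theorem~\ref{bigresult} apply, giving that $N$ is a (bounded) subgroup of $p^\omega G$; the only feature I actually need downstream is the containment $N\subseteq p^\omega G$.

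The heart of the argument is to transfer the problem to $p^\omega G$, where the Hopficity hypothesis lives. The key observation is that, because $N\subseteq p^\omega G\subseteq p^nG$ for every $n$, one has $p^nG+N=p^nG$ and hence $p^n(G/N)=p^nG/N$; intersecting over $n$ yields the identity $p^\omega(G/N)=p^\omega G/N$. Writing $\phi$ as the composite of the quotient map $G\to G/N$ with the isomorphism $\bar\phi\colon G/N\xrightarrow{\ \cong\ }G$, and restricting to $p^\omega G$, I get that $\phi$ carries $p^\omega G$ onto $\bar\phi(p^\omega(G/N))=\bar\phi(p^\omega G/N)=p^\omega G$, since isomorphisms preserve $p^\omega$. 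Thus $\phi\restriction_{p^\omega G}\colon p^\omega G\to p^\omega G$ is a surjective endomorphism whose kernel is $N\cap p^\omega G=N$. As $p^\omega G$ is Hopfian, this surjection is injective, forcing $N=\{0\}$ and hence $\phi$ injective, which is exactly what is needed.

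The two parenthetical cases are then cheap: if $p^\omega G$ is finite it is Hopfian because finite groups are Hopfian, and if $G$ is separable then $p^\omega G=\{0\}$, which is trivially Hopfian, so the corollary subsumes both. I expect the main obstacle --- really the only non-formal point --- to be the verification that $\phi$ restricts to a \emph{surjective} endomorphism of $p^\omega G$; surjective endomorphisms need not commute with the infinite intersection defining $p^\omega G$ in general, and it is precisely the containment $N\subseteq p^\omega G$ (supplied by relative Hopficity via Theorem~\ref{bigresult}) that rescues the computation of $p^\omega(G/N)$ and makes the restriction surjective.
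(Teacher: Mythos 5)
Your proposal is correct and follows essentially the same route as the paper: both deduce $N\subseteq p^\omega G$ from Theorem~\ref{bigresult}, use that containment to identify $p^\omega(G/N)$ with $p^\omega G/N$, and then invoke the Hopficity of $p^\omega G$ to conclude $N=\{0\}$. Your write-up merely spells out in more detail the verification that $p^\omega$ commutes with the quotient, which the paper states in one line.
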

	
\begin{proof} We know that if $G$ is Hopfian, then it is relatively Hopfian. So, assume $G$ is relatively Hopfian. To show $G$ is Hopfian, suppose $H$ is a subgroup of $G$ such that $G/H\cong G$. Exploiting Theorem~\ref{bigresult}, we can conclude that $H\subseteq p^\omega G$. Therefore, $p^\o G/H=p^\o (G/H)\cong p^\o G$. But since $p^\o G$ is Hopfian, we can conclude that $H=\{0\}$, so that $G$ is Hopfian, as promised.
\end{proof}
	
Our next result allows us to weaken the requirement of Hopficity of the first Ulm subgroup in Corollary \ref{ulmhopfian} above.
	
\begin{proposition}\label{ulmbounded}
If $G$ is a $p$-group such that $p^\o G$ is bounded and $G/p^\o G$ is Hopfian, then $G$ is relatively Hopfian.
\end{proposition}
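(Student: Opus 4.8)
The plan is to verify the internal characterization of relative Hopficity supplied by Theorem~\ref{bigresult}: it suffices to show that whenever $H$ is a subgroup of $G$ with $G/H \cong G$, the subgroup $H$ is a bounded subgroup of $p^\omega G$. So I would fix such an $H$ together with an isomorphism $G/H \cong G$. Composing the canonical surjection $G \to G/H$ with this isomorphism produces a surjective endomorphism $\phi : G \to G$ whose kernel is precisely $H$; the entire argument then reduces to controlling $\ker \phi$.

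Next I would pass to the first Ulm quotient $\bar G := G/p^\omega G$. Since $p^\omega G$ is fully invariant in $G$, we have $\phi(p^\omega G) \subseteq p^\omega G$, so $\phi$ induces an endomorphism $\bar\phi : \bar G \to \bar G$, and $\bar\phi$ is again surjective because $\phi$ is. This is exactly where the hypothesis that $\bar G$ is Hopfian enters: a surjective endomorphism of a Hopfian group is an automorphism, so $\bar\phi$ must be injective.

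The injectivity of $\bar\phi$ is the decisive point. Computing its kernel gives $\ker \bar\phi = \phi^{-1}(p^\omega G)/p^\omega G$, so injectivity forces $\phi^{-1}(p^\omega G) = p^\omega G$. In particular, $H = \ker \phi = \phi^{-1}(\{0\}) \subseteq \phi^{-1}(p^\omega G) = p^\omega G$. Finally, since $p^\omega G$ is bounded by hypothesis, its subgroup $H$ is bounded as well; thus $H$ is a bounded subgroup of $p^\omega G$, and Theorem~\ref{bigresult} then lets us conclude that $G$ is relatively Hopfian.

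The step I expect to carry the real weight is the reduction to the Ulm quotient together with the use of Hopficity there: one must check carefully that a surjective endomorphism of $G$ genuinely descends to a \emph{surjective} endomorphism of $\bar G$, and then read off from $\ker\bar\phi = \{0\}$ the containment $\ker\phi \subseteq p^\omega G$. It is worth emphasizing that boundedness of $p^\omega G$ is used only at the very end, to upgrade the containment $H \subseteq p^\omega G$ to the ``bounded'' half of the criterion in Theorem~\ref{bigresult}; the containment itself comes purely from the Hopficity of $G/p^\omega G$.
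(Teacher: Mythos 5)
Your argument is correct, but it takes a different route from the paper's. You reduce to the criterion of Theorem~\ref{bigresult}: for any $H$ with $G/H\cong G$ you induce a surjective endomorphism $\bar\phi$ of $G/p^\omega G$ (legitimate, since $p^\omega G$ is fully invariant), use Hopficity of $G/p^\omega G$ to get $\ker\bar\phi=\phi^{-1}(p^\omega G)/p^\omega G=\{\overline 0\}$, hence $H=\ker\phi\subseteq p^\omega G$, and then invoke boundedness of $p^\omega G$ only to supply the ``bounded'' half of the criterion. All steps check out, and there is no circularity since Theorem~\ref{bigresult} is established independently. The paper instead argues directly from Proposition~\ref{characterize}: assuming a surjection $\gamma:G\to G\oplus Z$ with $Z\cong\Z(p)$ or $\Z(p^\infty)$, it shows $\gamma(p^\omega G)\ne p^\omega G\oplus Z$ --- and here the boundedness of $p^\omega G$ does real work in the $\Z(p^\infty)$ case, since a bounded group cannot surject onto $\Z(p^\infty)$ --- and then obtains a surjective, non-injective composite $G/p^\omega G\to (G\oplus Z)/(p^\omega G\oplus Z)\cong G/p^\omega G$, contradicting Hopficity. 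The shared engine is the induced surjection on the Ulm quotient; the difference is that your proof leans on the heavier Theorem~\ref{bigresult} (whose sufficiency direction is all you need, and which keeps your bookkeeping to a single endomorphism of $G$), while the paper's direct argument avoids that theorem entirely at the cost of handling the extra summand $Z$ by cases. Both are valid; yours is arguably cleaner given that Theorem~\ref{bigresult} is already available.
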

	
\begin{proof}
If, in a way of contradiction, $G$ is not relatively Hopfian, then with the help of Proposition~\ref{characterize} there is a surjective homomorphism $\gamma: G\to G\oplus Z$, where $Z\cong \Z(p)$ or $Z\cong Z(p^\infty)$. Evidently, $$\gamma(p^\o G)\subseteq p^\o (G\oplus Z)= p^\o G \oplus p^\o Z.$$
		
Suppose first that $Z\cong \Z(p^\infty)$. Since $p^\omega G$ is bounded and $Z$ is unbounded, it follows that $\gamma (p^\o G)\ne p^\o G\oplus Z$. And if $Z\cong \Z(p)$, then $p^\o Z=\{0\}$, so again $\gamma (p^\o G)\ne p^\o G\oplus Z$.
		
Therefore, in either case, there is a surjective composite-homomorphism
		\[
		G/p^\o G\to (G\oplus Z)/\gamma(p^\o G)\to (G\oplus Z)/(p^\o G\oplus Z)\cong G/p^\o G,
		\]
which is not injective. This, however, contradicts that $G/p^\o G$ is Hopfian, and completes the proof.
\end{proof}
	
We have seen in Corollary~\ref{ulmhopfian} above that if $G$ is a reduced $p$-group of length $\o$, then $G$ is relatively Hopfian if, and only if, it is Hopfian. An immediate question arises as to whether or not this result can be extended to groups having greater length; in particular, does it expand to groups of length exactly $\o +1$? We shall show by constructing a suitable example that there is a relatively Hopfian, but non-Hopfian group $H$ of length exactly $\o +1$ with $p^\o H$ an elementary $p$-group of uncountable rank.
	
Our construction is quite complex and is based on ideas of both Pierce and Corner; in particular, we shall utilize Corner's well-known realization theorem for separable $p$-groups. For presentation purposes, we have separated out a part of the argument as a preliminary technical lemma.
	
\begin{lemma}\label{nonHprelim}
Suppose $G$ is an unbounded semi-standard separable Hopfian $p$-group with a basic subgroup $B$ such that: (i) there is an automorphism $\a$ of $G$ whose restriction to $B$ is an automorphism of $B$, and (ii) there is an element $z \in G[p] \setminus B[p]$ with $f(\a)(z) \notin B$ for any polynomial $f$ in $\Z[\a] \setminus p\Z[\a]$.
	
Then, there exists a subgroup $K$ of countable rank of $G[p]$ with $\a(K) \leq K$, $z\not\in K$ but $\a(z) \in K$. Furthermore, the group $H = G/K$ has a first Ulm subgroup which is elementary of uncountable rank and $H$ is relatively Hopfian but {\it not} Hopfian.
\end{lemma}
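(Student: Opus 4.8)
The plan is to produce $K$ as the $\alpha$-invariant subspace of the socle generated by the forward orbit of $z$, namely
\[
K=\langle \alpha^n(z) : n\geq 1\rangle \leq G[p].
\]
Since $\alpha$ is an automorphism of $G$ it preserves $G[p]$, so $K\leq G[p]$; it is visibly $\alpha$-invariant, contains $\alpha(z)$, and is generated by countably many elements, hence has countable rank. The only point requiring (ii) is that $z\notin K$: if $z=\sum_{n\geq 1}c_n\alpha^n(z)$ were a (finite) relation, then $f(\alpha)(z)=0\in B$ for $f=1-\sum_{n\geq1}c_nX^n$, and $f$ has constant term $1$, so $f\in\Z[\alpha]\setminus p\Z[\alpha]$, contradicting (ii). The same computation, applied to $f=\sum c_nX^n$, shows that the $\alpha^n(z)$ are $\mathbb{F}_p$-independent and that $K\cap B=\{0\}$, facts I expect to need later. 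This settles the first assertion of the lemma.

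Next I set $H=G/K$ and analyse $p^\omega H$. A direct computation gives $p^\omega H=L/K$, where $L=\bigcap_{n<\omega}(p^nG+K)$. Writing any $g\in L$ as $g=p^ng_n+k_n$ with $k_n\in K\leq G[p]$, and using $p^\omega G=\{0\}$ by separability, one gets $pg\in\bigcap_n p^{n+1}G=\{0\}$, so $L\leq G[p]$; consequently $p(L/K)=\{0\}$ and $p^\omega H$ is elementary. The same reduction identifies $L$ with the closure $\overline K=\bigcap_n(K+S_n)$ of $K$ in the height topology on the socle, where $S_n=G[p]\cap p^nG$. For non-Hopficity, note that $\alpha(K)\leq K$, so $\alpha$ descends to a surjective endomorphism $\bar\alpha$ of $H$; its kernel is $\alpha^{-1}(K)/K$, which contains the nonzero element $z+K$ because $\alpha(z)\in K$ while $z\notin K$. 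Thus $\bar\alpha$ is a proper surjection and $H$ is not Hopfian.

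Two substantive points then remain. First, that $p^\omega H=\overline K/K$ has uncountable rank: here $K$ is a countable-rank subgroup of the socle whose generators are independent by (ii), but the heights of their $\mathbb{F}_p$-combinations are unbounded (this is exactly the force of choosing $z\in G[p]\setminus B[p]$), so Cauchy sequences drawn from $K$ acquire distinct limits in the complete socle furnished by the Pierce/Corner model; concretely, one exhibits an uncountable family of independent height-increasing series $\sum_n c_n\,\alpha^{k_n}(z)$ whose values in $\overline K$ remain independent modulo $K$. Second, that $H$ is relatively Hopfian: since $p^\omega H$ is elementary, hence bounded, it suffices by Proposition~\ref{ulmbounded} to prove that $H/p^\omega H\cong G/\overline K$ is Hopfian. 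Because $\overline K$ is closed, one checks that $G/\overline K$ is again separable, and the task is then to transfer Hopficity from $G$ to this quotient.

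The main obstacle is precisely this last transfer of Hopficity to $G/\overline K$, together with the verification that $\overline K$ is genuinely larger than $K$ and of uncountable rank. Neither follows from the formal hypotheses alone: both hinge on the control of $\End(G)$ provided by Corner's realization theorem, and must be read off from the explicit model used to build $G$, $\alpha$ and $z$ rather than from (i) and (ii) in the abstract. Once these two points are secured, Proposition~\ref{ulmbounded} delivers relative Hopficity, and combined with the non-Hopficity established above and the elementary uncountable first Ulm subgroup, the construction is complete.
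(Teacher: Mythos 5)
There is a genuine gap, and it originates in your choice of $K$. You take $K=\langle \a^n(z):n\geq 1\rangle$, which does verify the first assertion of the lemma, but the ``furthermore'' clause refers to the quotient by that same $K$, and with your $K$ you are then forced to identify $p^\o H$ with $\overline K/K$ (the closure of $K$ in the socle) and $H/p^\o H$ with $G/\overline K$. As you candidly admit, you cannot show from the hypotheses that $\overline K/K$ has uncountable rank, nor that $G/\overline K$ is Hopfian; these are not minor verifications to be read off a model but the whole content of the lemma, and the lemma is stated precisely so that it can be proved from (i) and (ii) alone, without re-entering Corner's construction. A telltale sign is that your argument never uses hypothesis (i).

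The paper's choice is $K=B[p]\oplus L$ with $L=\langle \a^n(z):n\geq 1\rangle$, and this is exactly what closes both of your gaps. Hypothesis (i) makes $B[p]$ invariant under $\a$, so $K$ is still $\a$-invariant; hypothesis (ii) gives $L\cap B[p]=\{0\}$ and (by the same constant-term argument you use) $z\notin K$; and $K$ is countable because $B$ is countable ($G$ being semi-standard). The payoff of including $B[p]$ is that $B[p]\leq K\leq G[p]$ forces, by standard properties of basic subgroups, $p^\o(G/K)\cong G[p]/K$: this is elementary, and it is of uncountable rank because $G$, being unbounded, semi-standard and Hopfian, is uncountable (Proposition~\ref{count}), hence $G[p]$ is uncountable while $K$ is countable. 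Moreover $H/p^\o H\cong G/G[p]\cong pG$, which is Hopfian because $G$ is, so Proposition~\ref{ulmbounded} applies at once. The non-Hopficity argument via $\bar\a$ you give is the same as the paper's and is fine. In short: enlarge $K$ by $B[p]$ and your two ``substantive points'' evaporate; without that enlargement the proof does not go through.
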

	
\begin{proof} 		
Let $L = \langle \a^n(z) : n \geq 1\rangle$ so that it follows from our second hypothesis that $L \cap B[p] = \{0\}$; set $K = B[p] \oplus L$ and note that, as $B[p]$ is invariant under $\a$, $\a(K) \leq K$ and also $\a(z) \in L \leq K$; the countability of $K$ follows from the fact that both $B, L$ are countable, thereby establishing the first claim.
	
Now, consider $H = G/K$. Since $G$ is separable and $B[p] \leq K \leq G[p]$, a straightforward argument using standard properties of basic subgroups yields that $p^\o(G/K) \cong G[p]/K$, so that $p^\o H$ is certainly elementary.
	
Note also that, by hypothesis, $G$ is Hopfian and so $G$, and hence $G[p]$ is uncountable, which shows that $p^\o H$ is certainly of infinite, even uncountable, rank.
	
Further, one checks that $$H/p^\o H \cong G/G[p] \cong pG$$
and, as $G$ is Hopfian, so too is $pG$, as is well-known and easy to prove. Applying Proposition~\ref{ulmbounded} to $H$, we see that $H$ is certainly relatively Hopfian. Finally, as $\a: G \twoheadrightarrow G$ is a surjection with $\a(K) \leq K$, there is an induced surjection $\bar{\a}: H \twoheadrightarrow H$. But, $z + K$ is a non-zero element of $\Ker\ \bar{\a}$, so that $H \cong H/\Ker\ \bar{\a}$ is not Hopfian, as claimed.
\end{proof}

We now have all the machinery to produce our promised example.

\def\ii#1{{#1}^{-1}}
\def\ni{\noindent}	

\begin{example}\label{piercecorner} If $p$ is a prime, then there is a $p^{\o+1}$-bounded $p$-group $H$ with infinite $p^{\o}H$ such that $H$ is relatively Hopfian, but {\it not} Hopfian.
\end{example}
	
\begin{proof} Let $t$ be an indeterminate and $R$ be the $p$-adic completion of the ring $\Z[t,\ii t]$. We let $$\overline R=R/pR\cong \Z(p)[t,\ii t].$$
Likewise, let ${\bf p}_1(t), {\bf p}_2(t), \dots$ be a listing of monic polynomials in $\Z[t]$ of degree at least $1$ such that if $\overline {\bf p}_n(t)$ is the projection of ${\bf p}_n(t)$ into $\overline R$, then $\overline {\bf p}_1(t), \overline {\bf p}_2(t), \dots$ is a list of distinct monic irreducible polynomials in $\Z(p)[t]$ {\it other than $t$}.
		
\medskip
		
For each $n\in \N=\{1, 2, 3, \dots\}$, let $d_n$ be the degree of ${\bf p}_n(t)$, so that
		\[B_n:=\Z[t]/(p^n,{\bf p}_n(t))\cong \Z(p^n)^{d_n}\]
To show validity of the last isomorphism, one routinely checks that the surjective homomorphism $\Z\to \Z(p^n)$ with kernel $(p^n)\subseteq \Z$ determines a surjective homomorphism $\Z(t)\to \Z(p^n)[t]$ with kernel $(p^n)\subseteq \Z[t]$. Since $\overline {\bf p}_n(t)$ is monic, it divides into every other element of $\Z(p^n)[t]$ with a unique quotient and remainder of degree less than $d_n$. This yields that, as abelian groups,
		\[
		\Z[t]/(p^n,{\bf p}_n(t))\cong \Z(p^n)[t]/(\overline {\bf p}_n(t))\cong \bigoplus_{j=0}^{d_n-1} \langle t^j\rangle \cong \Z(p^n)^{d_n},
		\]
as pursued.
		
Note that $B_n$ is clearly a $\Z[t]$-module.  If $q(t)\in \Z[t]$ is such that $\overline q(t)$ is relatively prime to $\overline {\bf p}_n(t)$ in $\Z(p)[t]$, it follows that multiplication by $q(t)$ restricts to an automorphism of $B_n[p]\cong \Z(p)[t]/(\overline {\bf p}_n(t))$, so that it is, in fact, an automorphism on $B_n$. In particular, multiplication by $t$ induces an automorphism of $B:=\oplus_{n\in \N} B_n$, which we denote by $\tau$.
		
This implies that there is a natural $R$-module structure on $B$, and hence on its torsion completion $\overline B$. We next want to show that this construction satisfies Corner's ``Condition (C)'' (see \cite [Theorem~2.1]{C3}): suppose $\b\in R\setminus pR$; we need to show $\b$ is not zero on $(p^m B)[p]$ for any $m\in \N$. Note $\b = t^j q(t)$ for some $j\in \Z$, where $\overline q(t)\in \Z(p)[t]$. Since $t$ induces the isomorphism $\tau$ on $B$, we may assume that $j=0$ and $\b(t)=q(t)$. Choose $n>m$, so that $\overline {\bf p}_n(t)$ is relatively prime to $\b(t)$ when mapped into $\Z(p)[t]$ (i.e., the irreducible $\overline {\bf p}_n(t)$ does not divide $\overline \b(t)$).  Therefore, $\b(t)$ induces an automorphism of $B_n$, so that $\b((p^m B)[p])\ne \{0\}$.
		
It follows from the cited Corner's construction that there is a separable $p$-group $G$ with basic subgroup $B$ such that $E(G)=R\oplus S$, where $S$ is the ideal of small endomorphisms of $G$, and $R$ restricts to the above endomorphisms on $B$. If $\b(t)\in R$ and $x\in G$, we will often denote $\b(t) x$ by $\b(x)$.
		
Observe elementarily that, if $m\in \N$, then $G=B_{\leq m}\oplus C_{m}$, where $B_{\leq m}=\oplus_{n\leq m} B_n$ and $C_{m}$ is the closure of $B_{>m}:=\oplus_{n>m} B_n$ in $G$. If $\b\in R$, then clearly $\b(B_{\leq m})\subseteq B_{\leq m}$. And since $\b(B_{>m})\subseteq B_{>m}$, it easily follows that $\b(C_m)\subseteq C_m$.
		
If $0\ne \overline \b(t)\in \overline R$, and $m$ is chosen so that, for each $n>m$, $\overline {\bf p}_n(t)$ is relatively prime to $\overline \b(t)$ in $\overline R$, then it follows that $\b(t)$ restricts to an automorphism of $B_{>m}$. This readily ensures that $\b(t)$ preserves the $p$-heights of all members of $C_m[p]$. Therefore, $\b(t)$ embeds $C_m$ as a pure subgroup of $C_m$, and since it contains $B_{>m}$, we can conclude that $C_m/\b(C_m)$ is divisible.
		
\medskip

Pausing for some standard notation, for $x\in G$, we let $p^{e(x)}$ be the order of $x$.
		
\medskip

\ni {\bf Claim 1:} Suppose $\lambda=p^k \b\in R$, where $\b\in R$ has $p$-height 0 (so $\lambda$ has $p$-height $k\in \o=\{0,1,2, \dots\}$). Then, there is an $m\in \N$ such that if $x\in G$ has $e(x)>m$, then $e(\lambda(x))=e(x)-k.$
		
\medskip
		
Choose $m\in \N$ such that $m>k$ and no $\overline {\bf p}_n(t)$ for $n> m$ divides $\overline \b$ in $\overline R$. Consider the decomposition $G=B_{\leq m}\oplus C_m$ as above. If $e(x)>m$, it follows that $x=x_1+x_2$, where $x_1\in B_{\leq m}$ and $x_2\in C_m$, and $e(x_1)\leq m<e(x)=e(x_2)$. Again $\b$ restricts to a pure embedding $C_m\to C_m$ and another homomorphism $B_{\leq m}\to B_{\leq m}$. It, thus, follows that
		\[
		e(x)-k\geq    e(\lambda(x))=\max\{e(p^k \b(x_1)), e(p^k \b(x_2))\}\geq e(p^k \b(x_2))=e(x)-k,
		\]
as claimed.
		
\medskip
		
\ni {\bf Claim 2:} Suppose $\gamma = \lambda +\phi$, where $\lambda=p^k \b$ as above, and $\phi\in S$ is a small endomorphism of $G$. Then, there is an $m\in \N$ such that
		\[
		\gamma(G)\cap  G[p^{m+1}]\subseteq p^k \b(G)+G[p^m].  \eqno {(1)}
		\]
		
\medskip
		
Choose $m>k$ such that, if $e(x)>m$, then
		
\medskip
		
\ni (a) $e(\lambda(x))=e(x)-k$ (This $m$ exists by Claim 1);
		
and
		
\ni (b) $e(\phi(x))\leq e(x)-(k+1)=e(\lambda(x))-1$. (This $m$ exists by \cite[Theorem~3.3]{P}).
		
\medskip
		
Suppose

        \[y=\gamma(x)=p^k \b(x) +\phi(x)\in G[p^{m+1}].
		\]  It follows now that $p^{m+1} y=0$, or
		\[
		p^{m+1} \lambda(x)=-p^{m+1} \phi(x). \eqno{(2)}
		\]
We assert that $p^m \phi(x)= 0$, which will establish the present claim. Assume otherwise, so that $p^m \phi(x)\ne 0$ and hence $p^m x\ne 0$. Therefore, $e(x)>m$, so that using (b) above,
		\[
		m<e(\phi(x))\leq e(x)-(k+1).
		\]
Thus, utilizing (a) above, $$m+1<e(x)-k=e(\lambda(x)),$$ so the terms in (2) are non-zero. Consequently, $e(\phi(x))=e(\lambda(x))$. This, however, contradicts (b), establishing Claim 2.
		
\medskip
		
\ni {\bf Claim 3:} $G$ is Hopfian.
		
\medskip
		
Continuing with the above notation, suppose $\gamma$ is surjective; we want to show it is an isomorphism. We first observe that $k=0$, so that $\lambda=\b$: if $k>0$, then using the number $m$ from Claim 2, if $y\in B_{m+1}$ has height $0$, then $y\in G[p^{m+1}]$ is not in $p^k \b(G)+G[p^m]\subseteq p^k G+G[p^m]$, so $y$ cannot be in $\gamma(G)$ and $\gamma$ is not surjective.
		
Thereby, we may assume $\gamma=\b+\phi$, so that
		\[
		G[p^{m+1}]\subseteq \b(G)+G[p^m].  \eqno{(3)}
		\]
		
Again, in the decomposition $G=B_{\leq m}\oplus C_m$, since $\b(B_{\leq m})\subseteq B_{\leq m}$ and $\b(C_m)\subseteq C_m$, (3) implies that
		\[
		C_m[p^{m+1}]\subseteq \b(C_m)+C_m[p^m].
		\]
Or, in other words,
		\[
		\b(C_m)+C_m[p^{m+1}]=\b(C_m)+C_m[p^m].
		\]
		
We already noted before that $\b(C_m)$ will be pure and dense in $C_m$. So, we subsequently deduce
		\begin{align*}
			(C_m/\b(C_m))[p^m] &= (\b(C_m)+C_m[p^m])/\b(C_m)\cr
			&= (\b(C_m)+C_m[p^{m+1}])/\b(C_m)\cr
			&=(C_m/\b(C_m))[p^{m+1}].
		\end{align*}
Therefore, $C_m/\b(C_m)=\{\overline{0}\}$, i.e., $\b(C_m)=C_m$, so that $\b$ restricts to an automorphism on $C_m$.
		
We now observe that $\phi((p^m G)[p])=\{0\}$: indeed, assume $0\ne z\in (p^m G)[p]$. If $z=p^m x$, then $e(x)=m+1>m$. So, by (b), one verifies that $e(\phi(x))\leq e(x)-1=m$. Thus, $$\phi(z)=p^m \phi(x)=0,$$ as stated.
		
Next, if $\pi:G\to C_m$ is the obvious projection, then since $\phi(C_m[p])=\{0\}$, the map $\b$ agrees with $\pi\circ \gamma$ on $C_m[p]$. Furthermore, we also have $(\pi\circ \gamma)(C_m)=C_m$, i.e., the composition $\pi\circ \gamma$ is also an automorphism of $C_m$. This yields that $$G=B_{\leq m}\oplus \gamma(C_m).$$ Since $\gamma$ induces a surjective homomorphism $$B_{\leq m}\cong G/C_m\to G/\gamma(C_m)\cong B_{\leq m}$$ and as $B_{\leq m}$ is finite, the map $\gamma$ induces an isomorphism $G/C_m\to G/\gamma(C_m)$. And since $\gamma$ too induces an isomorphism $C_m\to \gamma(C_m)$, it follows at once that $\gamma$ itself is an isomorphism, verifying after all Claim 3.

Let $z\in G[p]\setminus B[p]$. We assert now that, if $$\s(t)\in \Z[t]\setminus p\Z[t],$$ then $\s(z)\not\in B$: in fact, expressing $z=\sum_{n\in \N} z_n$, where $z_n\in B_n[p]$, then it must be that $z_n\ne 0$ for an infinite number of such $n$. For all but finitely many $n$, $\s(t)$ is an isomorphism on $B_n$, so that $\s(z)\not\in B$.

It follows that $$L:=\langle \tau^n(z): n\in \N\rangle\subseteq G[p]$$ satisfies the relations $$L\cong (t)\subseteq \Z(p)[t] ~ {\rm and} ~ L\cap B[p]=\{0\}.$$
The proof now follows from an appeal to Lemma \ref{nonHprelim}, with $\tau$ playing the role of $\a$ in this lemma.		 \end{proof}
	
We now completely describe the weakly Hopfian $p$-groups discovering their structure. Before doing that, we prove the following non-trivial technicality, which must surely be known to experts in the field, but seems never to have appeared in the standard literature.

\begin{lemma}\label{good}
If $N$ is a pure subgroup of $G$ and $k<\omega$, then the Ulm functions satisfy the equality
$$f_k(G) = f_k(G/N) + f_k(N).$$
\end{lemma}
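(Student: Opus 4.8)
The plan is to prove, for each fixed $k<\omega$, that the canonical quotient map $G\to G/N$ induces a short exact sequence of $\Z(p)$-vector spaces
$$0 \to (p^kN)[p] \to (p^kG)[p] \to (p^k(G/N))[p] \to 0,$$
and then to ``differentiate in $k$'' so as to pass from the socles $(p^kG)[p]$ themselves to the Ulm quotients $(p^kG)[p]/(p^{k+1}G)[p]$, whose ranks are precisely the invariants $f_k$.

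First I would pin down the first and third terms using purity. Since $N$ is pure in $G$ we have $N\cap p^kG=p^kN$ for every $k$, and hence $(p^kN)[p]=N\cap (p^kG)[p]$. This immediately identifies the kernel of the map $(p^kG)[p]\to G/N$ with exactly $(p^kN)[p]$, and also shows that the image of $(p^kG)[p]$ lands inside $(p^k(G/N))[p]$, since $p^k(G/N)=(p^kG+N)/N$.

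The crux, and the main obstacle, is surjectivity of $(p^kG)[p]\to (p^k(G/N))[p]$. Given $\bar y=y+N$ in $(p^k(G/N))[p]$, I would first adjust $y$ by an element of $N$ so that $y\in p^kG$, which is possible because $p^k(G/N)=(p^kG+N)/N$. Then $py\in N\cap p^{k+1}G=p^{k+1}N$ by purity, so $py=pz$ for some $z\in p^kN$; replacing $y$ by $y-z$ does not change $\bar y$ (as $z\in N$) and produces a representative lying in $(p^kG)[p]$ that maps onto $\bar y$. This lifting step rests squarely on the purity equality $N\cap p^{k+1}G=p^{k+1}N$: without it one cannot guarantee that a socle element of $p^k(G/N)$ pulls back to an element of $(p^kG)[p]$, and this is exactly where all the work lies.

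With the short exact sequence in hand for both $k$ and $k+1$, I would stack them into a commutative ladder whose three vertical maps are the inclusions $(p^{k+1}{}\cdot{})[p]\hookrightarrow (p^{k}{}\cdot{})[p]$. Since these vertical maps are injective, the snake lemma collapses to a short exact sequence of the cokernels, that is, of the $k$-th Ulm quotients:
$$0 \to \frac{(p^kN)[p]}{(p^{k+1}N)[p]} \to \frac{(p^kG)[p]}{(p^{k+1}G)[p]} \to \frac{(p^k(G/N))[p]}{(p^{k+1}(G/N))[p]} \to 0.$$
Taking $\Z(p)$-dimensions and invoking additivity of dimension across a short exact sequence of vector spaces (valid as an identity of cardinals, even when the ranks are infinite) then yields $f_k(G)=f_k(N)+f_k(G/N)$, which is the asserted equality.
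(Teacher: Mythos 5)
Your proposal is correct and follows essentially the same route as the paper: both establish the purity-induced short exact sequences of socles $0\to (p^kN)[p]\to (p^kG)[p]\to (p^k(G/N))[p]\to 0$ at levels $k$ and $k+1$ and then deduce, by a standard diagram argument (your snake-lemma ladder versus the paper's $3\times 3$ diagram), a short exact sequence of the Ulm quotients $U_k$, whence additivity of ranks. The only difference is cosmetic: you spell out the surjectivity lift that the paper dismisses with ``by purity and the fact that $k<\omega$,'' which is a welcome amplification rather than a divergence.
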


\begin{proof} For any group $H$, let $U_k(H)$ denote  the Ulm factor $(p^k H)[p]/(p^{k+1} H)[p]$. Consider the following diagram:
	
	\[\begin{CD}
		@. \{0\}   @.  \{0\}    @. \{0\}    \\
		@.   @VVV               @VVV             @VVV  \\
		\{0\}   @>>>  p^{k+1} N[p]    @>>>  p^{k+1} G[p]    @>>> p^{k+1}(G/N)[p] @>>> \{0\}   \\
		@.   @VVV               @VVV             @VVV  \\
		\{0\}   @>>>  p^{k} N[p]    @>>>  p^{k} G[p]    @>>> p^{k}(G/N)[p] @>>> \{0\}   \\
		@.   @VVV               @VVV             @VVV  \\
		\{0\}   @>>>  U_k(N)    @>>>  U_k(G)    @>>>    U_k(G/N) @>>> \{0\}   \\
		@.   @VVV               @VVV             @VVV  \\
		@. \{0\}   @.  \{0\}    @. \{0\}    \\
	\end{CD}\]
	
\medskip
\medskip
	
One plainly inspects that its upper two horizontal rows are short exact by purity and the fact that $k<\omega$. Likewise, its columns are short exact by definition. So, it follows that the bottom row is also short exact. Since these are all $p$-bounded groups, the bottom row actually splits. Considering their ranks, we get the wanted equality.
\end{proof}

It is worth noticing that a more direct argument, based on the well-known fact that neatness of a subgroup $H$ of $G$ is equivalent to the equality $(G/H)[p] = (G[p] +H))/H$, can be given. However, our homological proof above is more transparent and conceptual. In this in mind, it is also possible to expand the result to subgroups which are both nice and isotype, i.e., to balanced subgroups.

\medskip

We can now give a satisfactory description of weak Hopficity for primary groups.

\begin{theorem}\label{torsionweak} If $G$ is a $p$-group, then the following three items are equivalent:
	
(a) $G$ is weakly Hopfian;
	
(b) $G$ does not have a summand of the form $A^{(\kappa)}$, where $A$ is co-cyclic (i..e,  $A\cong \Z(p^k)$ for some $k\in \N^\infty:=\N\cup \{\infty\}$). In other words, $f_k(G)$ is finite for all $k\in \o^\infty=\o\cup \{\infty\}$;
	
(c) $G$ is directly finite.
\end{theorem}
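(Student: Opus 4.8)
The plan is to prove the cycle (a) $\Rightarrow$ (c) $\Rightarrow$ (b) $\Rightarrow$ (a). The first implication is immediate from Theorem~\ref{containments}, since $\mathcal{WH}\subseteq \mathcal{DF}$. For (c) $\Rightarrow$ (b) I would invoke Corollary~\ref{cor3}: writing $G=R\oplus D$ with $R$ reduced and $D$ divisible, direct finiteness forces the $p$-component of $D$ to have finite $p$-rank and $R$ to be $p$-semi-standard. Since the finite Ulm invariants are insensitive to the divisible part (one has $f_n(\qc)=0$ for all $n<\o$), this gives $f_n(G)=f_n(R)<\infty$ for every finite $n$, while $f_\infty(G)$ equals the $p$-rank of $D$, which is finite. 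The passage to the ``no summand $A^{(\kappa)}$'' formulation is the standard one: an infinite $f_k(G)$ with $k<\o$ produces a summand $\Z(p^{k+1})^{(\o)}$ inside a basic subgroup, which is pure and bounded, hence a summand of $G$, whereas an infinite $f_\infty(G)$ yields a summand $\qc^{(\o)}$; in either case $G\cong G\oplus A$ with $A\ne\{0\}$, contradicting (c).

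The heart of the matter is (b) $\Rightarrow$ (a), and here purity is what makes everything work. Assume every $f_k(G)$ with $k\in\o^\infty$ is finite, and let $\pi\colon G\to G$ be a surjection with \emph{pure} kernel $J$, so that $G/J\cong G$. Applying Lemma~\ref{good} to the pure subgroup $J$ gives, for each finite $k$,
\[
f_k(G)=f_k(G/J)+f_k(J)=f_k(G)+f_k(J),
\]
and since $f_k(G)$ is finite we may cancel to conclude $f_k(J)=0$ for all $k<\o$. A $p$-group all of whose finite Ulm--Kaplansky invariants vanish has trivial basic subgroup and is therefore divisible; hence $J\cong \qc^{(\lambda)}$ for some cardinal $\lambda$, and it remains only to force $\lambda=0$.

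Being divisible, $J$ is a direct summand, so $G=J\oplus G'$ with $G'\cong G/J\cong G$, whence $G\cong G\oplus J$. Passing to maximal divisible subgroups (which respect finite direct sums and read off $f_\infty$) we obtain $\qc^{(\mu)}\cong \qc^{(\mu)}\oplus\qc^{(\lambda)}$ with $\mu=f_\infty(G)$. Comparing ranks gives $\mu=\mu+\lambda$, and since $\mu$ is finite by hypothesis this forces $\lambda=0$, i.e. $J=\{0\}$; thus $G$ is weakly Hopfian and the cycle closes. The two delicate points I would flag are that Lemma~\ref{good} genuinely needs the kernel to be pure (exactly the hypothesis weak Hopficity supplies, and the reason (a) is strictly stronger than mere direct finiteness for the finite invariants), and that Lemma~\ref{good} controls only the finite invariants, so the divisible part must be handled separately at the last step through $f_\infty$; this final divisible-summand cancellation is where the finiteness of $f_\infty(G)$ is actually used.
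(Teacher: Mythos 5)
Your proposal is correct and follows essentially the same route as the paper: the implications (a) $\Rightarrow$ (c) and (c) $\Rightarrow$ (b) are handled identically, and for the key implication (b) $\Rightarrow$ (a) you use Lemma~\ref{good} on the pure kernel to kill the finite Ulm invariants and then dispose of the divisible remainder by comparing ranks of maximal divisible subgroups, which is just the direct form of the paper's contrapositive two-case argument (non-divisible kernel forces an infinite finite-index invariant; divisible kernel forces an infinite divisible part). The only cosmetic difference is that you first deduce the kernel is divisible and then cancel, whereas the paper cases on divisibility at the outset.
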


\begin{proof} $\lnot$(a) $\Rightarrow  \lnot$(b): Suppose $G$ is not weakly Hopfian. Then, it has a non-zero pure subgroup $N$ such that there is an isomorphism $G/N\cong G$. Suppose first that $N$ is divisible, and denote the maximal divisible subgroup of $G$ by $D$. It follows that
	\[
	G\cong (G/N)\oplus N\cong G\oplus N,
	\]
so that $D\cong D\oplus N$. This implies that $D$ has infinite rank, so that $G$ has a summand of the form $\Z(p^\infty)^{(\o)}$, as required.
	
If $N$ fails to be divisible, then for some $k<\o$ we have $f_k(N)\ne 0$. The purity of $N$ is a guarantor that, in view of Lemma~\ref{good}, our Ulm-Kaplansky functions will satisfy the following two equalities
	\[
	f_k(G)=f_k(G/N)+f_k(N)=f_k(G)+f_k(N).
	\]
This forces that $f_k(G)$ is infinite, which means that $G$ has a summand of the form $\Z(p^{k+1})^{(\o)}$, which proves the implication.
	
$\lnot$(b) $\Rightarrow  \lnot$(c): We indicated this previously.
	
$\lnot$(c) $\Rightarrow  \lnot$(a): Follows from $\mathcal {WH}\subseteq \mathcal{DF}$.
\end{proof}

As a consequence, we extract the following necessary and sufficient condition.

\begin{corollary}\label{cor7}
A reduced $p$-group is weakly Hopfian if, and only if, it is $p$-semi-standard.
\end{corollary}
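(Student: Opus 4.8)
The plan is to read off this corollary as an immediate specialization of Theorem~\ref{torsionweak} to the reduced case. Recall that Theorem~\ref{torsionweak} asserts that a $p$-group $G$ is weakly Hopfian precisely when its Ulm--Kaplansky invariant $f_k(G)$ is finite for every $k\in\o^\infty=\o\cup\{\infty\}$; here the index $k=\infty$ records the divisible part, since an infinite value of $f_\infty(G)$ is exactly the assertion that $G$ has a summand of the form $\qc^{(\kappa)}$ for some infinite cardinal $\kappa$, i.e.\ that the maximal divisible subgroup of $G$ has infinite rank. So the first move is simply to invoke that theorem and then examine what its finiteness conditions say under the extra hypothesis of reducedness.

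The crucial (and essentially only substantive) step is to observe that for a \emph{reduced} $p$-group $G$ the maximal divisible subgroup is trivial, so that $f_\infty(G)=0$ is automatically finite. Consequently the list of finiteness conditions in Theorem~\ref{torsionweak}(b) loses nothing by discarding the index $\infty$, and collapses to the requirement that $f_n(G)$ be finite for every finite $n<\o$. But this last requirement is, verbatim, the definition of $G$ being $p$-semi-standard recorded in Section~\ref{PR}.

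Hence for reduced $G$ the chain $\bigl(\text{weakly Hopfian}\bigr)\Longleftrightarrow\bigl(f_k(G)\text{ finite for all }k\in\o^\infty\bigr)\Longleftrightarrow\bigl(f_n(G)\text{ finite for all }n<\o\bigr)\Longleftrightarrow\bigl(p\text{-semi-standard}\bigr)$ establishes the equivalence in both directions at once. There is no real obstacle to overcome; the single conceptual point to get right is the identification of the transfinite index $\infty$ with the divisible part, so that reducedness is precisely what is needed to render that one invariant vacuous and leave only the finite Ulm--Kaplansky invariants in play.
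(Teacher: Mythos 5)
Your proposal is correct and matches the paper's (implicit) reasoning: the corollary is stated as an immediate consequence of Theorem~\ref{torsionweak}, and the only point to note is exactly the one you make, namely that reducedness forces $f_\infty(G)=0$, so condition (b) of that theorem reduces to the finiteness of the finite-index Ulm--Kaplansky invariants, i.e.\ to $p$-semi-standardness.
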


There are clearly lots of examples of weakly Hopfian $p$-groups that fail to be relatively Hopfian; e.g., any unbounded $p$-semi-standard direct sum of cyclic $p$-groups will suffice.

\medskip

It follows from Corollary \ref{cor7} that a reduced relatively Hopfian $p$-group has cardinality at most the continuum. In the case where the cardinality is actually countable, we obtain:

\begin{proposition}\label{count} A countable reduced relatively Hopfian $p$-group is finite.
\end{proposition}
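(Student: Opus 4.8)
The plan is to combine the characterisation in Theorem~\ref{bigresult} with the classification of countable reduced $p$-groups. First I would record the standing reductions. Since $\mathcal{RH}\subseteq\mathcal{WH}$, Corollary~\ref{cor7} gives that $G$ is $p$-semi-standard, so every $f_n(G)$ (for $n<\omega$) is finite. Assume, for contradiction, that $G$ is infinite, and let $B=\bigoplus_n \mathbb{Z}(p^n)^{(f_n(G))}$ be a basic subgroup. If $B$ were bounded it would be finite, hence a pure bounded (so direct) summand $G=B\oplus C$ with $C\cong G/B$ divisible; as $G$ is reduced this forces $C=\{0\}$ and $G=B$ finite, a contradiction. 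Thus $B$ is unbounded. Since the finite Ulm invariants are unchanged modulo $p^\omega G$, the same computation shows that $\bar G:=G/p^\omega G$ cannot be finite; being countable and separable, $\bar G$ is, by Pr\"ufer's theorem, an \emph{unbounded} direct sum of cyclic groups.

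By Theorem~\ref{bigresult}, to reach a contradiction it suffices to produce a non-zero subgroup $H\le G$ with $G/H\cong G$ for which $H$ is \emph{not} a bounded subgroup of $p^\omega G$. When $G$ is separable ($p^\omega G=\{0\}$) this is immediate: choosing cyclic summands $\langle e_1\rangle,\langle e_2\rangle,\dots$ of strictly increasing orders inside $\bar G=G$, the shift $e_n\mapsto e_{n-1}$ (with $e_1\mapsto 0$), extended by the identity on a complementary summand, is a surjective endomorphism with non-zero kernel $H$; here $H\not\subseteq p^\omega G=\{0\}$ while $G/H\cong G$, which already contradicts relative Hopficity. Equivalently, Corollary~\ref{ulmhopfian} identifies relative Hopficity with Hopficity in the separable case, and an unbounded direct sum of cyclics is not Hopfian.

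The substantive case is $p^\omega G\ne\{0\}$, and here I would argue by transfinite induction on the (countable) length of $G$, applying the inductive hypothesis to the shorter group $p^\omega G$. The strategy is to reduce to the separable case by first showing that $p^\omega G$ is finite and then invoking Corollary~\ref{ulmhopfian} to conclude that $G$ is in fact Hopfian; once $p^\omega G$ is finite, the shift on the unbounded direct sum of cyclics $\bar G$ can be lifted across the finite kernel to a surjective endomorphism of $G$ whose kernel meets $G\setminus p^\omega G$ (adjusting the lift on the finite group $p^\omega G$ to an automorphism, exactly as in the proof of Lemma~\ref{extending}), again contradicting Theorem~\ref{bigresult}. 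I expect the genuine obstacle to be this passage through the first Ulm subgroup: relative Hopficity is not obviously inherited by $p^\omega G$, so verifying that $p^\omega G$ is finite, and then checking that the lifted quotient is once more isomorphic to $G$, is where the real work lies. The natural tools for that bookkeeping are Ulm's classification of countable reduced $p$-groups (to recognise $G/H\cong G$ from agreement of all Ulm invariants) together with Lemma~\ref{good} at the finite levels.
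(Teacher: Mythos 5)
Your handling of the separable case is correct and coincides with the paper's endgame (an unbounded direct sum of cyclic groups is separable, hence by Corollary~\ref{ulmhopfian} would have to be Hopfian, which it is not). The genuine gap is exactly where you suspect it: in the non-separable case your plan requires relative Hopficity, or at least finiteness, of $p^\omega G$, but $p^\omega G$ is only a fully invariant subgroup, not a direct summand, so Proposition~\ref{prop3} does not apply and nothing in the paper lets an induction on length pass through the first Ulm subgroup; moreover the finiteness results for Ulm invariants (Corollary~\ref{cor3}, Theorem~\ref{torsionweak}) concern only $f_n$ for $n<\omega$ and the divisible part, so they give no control over $f_\alpha(G)$ for $\alpha\ge\omega$. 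In addition, knowing that $G/p^\omega G$ is an unbounded direct sum of cyclic groups does not by itself yield a summand or a surjective endomorphism of $G$, since the projection $G\to G/p^\omega G$ need not split, so the proposed ``lifting of the shift'' is not available without substantial extra work.

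The paper bypasses all of this with one classical fact absent from your proposal: by a consequence of Zippin's theorem (see \cite[Proposition 1.11, Chapter 11]{F}), every countably infinite unbounded reduced $p$-group has a direct summand $H$ which is an unbounded direct sum of cyclic groups. Since direct summands inherit relative Hopficity (Proposition~\ref{prop3}) and $H$ is separable, Corollary~\ref{ulmhopfian} forces $H$ to be Hopfian, contradicting the fact that an unbounded direct sum of cyclics is never Hopfian. This single summand extraction collapses your two cases into one and removes any need to analyse $p^\omega G$; I would replace the induction-on-length plan with it.
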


\begin{proof} As already observed, a reduced relatively Hopfian $p$-group is semi-standard, hence if $G$ is bounded it must be finite. Suppose then that $G$ is unbounded and countably infinite. Then, a well-known consequence of Zippin's theorem - see, for instance, \cite[Proposition 1.11, Chapter 11]{F} - is that $G$ has a direct summand, say $H$, which is an unbounded direct sum of cyclic groups. Therefore, thanks to Proposition~\ref{prop3}, we discover that $H$ is simultaneously relatively Hopfian and an unbounded direct sum of cyclic groups; in particular, $H$ is separable and so with the aid of Corollary \ref{ulmhopfian} we get that $H$ is Hopfian. This is, however, impossible since, as is well known and easy to prove, an unbounded direct sum of cyclic groups is never Hopfian. Consequently, $G$ is necessarily finite, as formulated.
\end{proof}

We now deduce the useful consequence:

\begin{corollary} A reduced relatively Hopfian $p$-group cannot have a countably infinite direct summand. In particular, a reduced unbounded totally projective $p$-group is never relatively Hopfian.
\end{corollary}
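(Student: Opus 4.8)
The first assertion is immediate from the results already in hand. The plan is: if a reduced relatively Hopfian $p$-group $G$ had a countably infinite direct summand $S$, then $S$ would itself be reduced (as a summand of a reduced group) and relatively Hopfian (by Proposition~\ref{prop3}), hence a countable reduced relatively Hopfian $p$-group. Proposition~\ref{count} would then force $S$ to be finite, contradicting that $S$ is countably infinite. So no such summand can exist, which is exactly the first claim.

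For the second (\lq\lq in particular\rq\rq) assertion, I would deduce it from the first by establishing that every reduced unbounded totally projective $p$-group possesses a countably infinite direct summand. The transparent core is the separable case: a separable totally projective $p$-group is a direct sum of cyclic groups, and an unbounded direct sum of cyclic groups visibly contains a countably infinite summand $\bigoplus_k \Z(p^{n_k})$ with $n_k\to\infty$. For the general case I would invoke the structure theory of totally projective groups, namely that in the relevant range of lengths such a group decomposes as $G=\bigoplus_{i\in I}G_i$ with each $G_i$ countable (and totally projective). Since $G$ is unbounded, either some single $G_i$ is unbounded, hence a countably infinite summand, or every $G_i$ is bounded but with bounds unbounded over $i$, in which case a countable subfamily whose sum is unbounded yields a countably infinite direct summand. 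In either situation the first part applies and $G$ is not relatively Hopfian.

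The main obstacle is precisely the passage from the separable case to the general totally projective case: one must genuinely extract a countably infinite \emph{direct summand}, and this rests on the decomposition of totally projective groups into countable pieces (Hill--Nunke), a step that becomes delicate once the length reaches $\omega_1$, since there totally projective groups need not be direct sums of countable groups. I would therefore either restrict the structural input to the direct-sum-of-countable-groups situation (where the extraction above is routine), or instead argue via the third axiom of countability that the countably many elements witnessing unboundedness can be absorbed into a countable direct summand. Once that summand is produced, the contradiction with Proposition~\ref{count} is automatic, so this decomposition step is where essentially all the work lies.
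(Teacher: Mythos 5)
Your first paragraph is exactly the paper's argument: a countably infinite summand would be reduced and relatively Hopfian by Proposition~\ref{prop3}, hence finite by Proposition~\ref{count}, contradicting that it is countably infinite. No issues there.

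The gap is in the second assertion, and it is the one you flag yourself. The paper does not pass through direct sums of countable groups at all; it simply invokes the known structural fact (citing \cite[Theorem 4.5(1)]{DGSZ}) that a reduced unbounded totally projective $p$-group always admits a direct summand which is an infinite (indeed unbounded) direct sum of cyclic groups, after which the first part applies at once. Your route via the Hill--Nunke decomposition genuinely breaks down once the length reaches $\omega_1$: totally projective groups of length $\geq \omega_1$ need not be direct sums of countable groups, so restricting "the structural input to the direct-sum-of-countable-groups situation" simply excludes most of the class rather than proving the statement for it. Your proposed repair via the third axiom of countability also does not work as stated: Axiom 3 supplies a family of \emph{nice} subgroups, and a countable nice subgroup containing the elements witnessing unboundedness is not thereby a direct \emph{summand} --- one would additionally need it to be isotype (hence balanced) with totally projective quotient, none of which is automatic. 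If you want a self-contained argument in place of the citation, the clean route is through Ulm invariants: since $G$ is reduced and unbounded, $G/p^\omega G$ cannot be bounded (otherwise $p^nG=p^{n+1}G$ would be divisible, hence zero), so infinitely many of the finite Ulm invariants $f_n(G)$, $n<\omega$, are non-zero, and Ulm's theorem for totally projective groups then lets one split off an unbounded direct sum of cyclic groups as a summand. As written, your proof of the second assertion is incomplete at precisely the step you identify.
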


\begin{proof} Since in view of Proposition~\ref{prop3} a summand of a relatively Hopfian $p$-group is again relatively Hopfian, the first statement follows immediately from Proposition \ref{count}. Furthermore, a reduced unbounded totally projective $p$-group always admits a direct summand which is an infinite direct sum of cyclic groups - see, for instance, \cite[Theorem 4.5(1)]{DGSZ} - and hence cannot be relatively Hopfian, as claimed.
\end{proof}  	

Apart from the totally projective $p$-groups, the other significant class of $p$-groups with a good classification is the class of reduced torsion-complete groups (cf. \cite{F1},\cite{F}). In what follows, it is easy to show, with what we have established so far in hand, that an infinite torsion-complete $p$-group is never relatively Hopfian. Before proving that, we are searching for the next technicality (for a relevant result, see \cite[Theorem 3.3]{Ka} as well).

\medskip

For completeness of the exposition, we recall that a subgroup $H$ of a group $G$ is said to be {\it dense}, provided the factor-group $G/H$ is divisible.

\begin{lemma}\label{extend} Let $G$ be a reduced torsion-complete $p$-group and $H\leq G$ a pure dense subgroup. Then, every surjective endomorphism $f: H\to H$ can be extended to an unique surjective endomorphism $\varphi: G\to G$.
\end{lemma}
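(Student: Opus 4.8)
The plan is to prove existence and uniqueness separately, relying essentially on the fact that in a reduced torsion-complete $p$-group $G$, the natural topology (the $p$-adic topology) makes $G$ complete, while any pure dense subgroup $H$ is dense in this topology with $G$ serving as its torsion completion. First I would set up the topological framework: since $H$ is pure and dense in $G$, and $G$ is torsion-complete, $G$ is precisely the torsion-completion of $H$ (equivalently, $H$ and $G$ share a common basic subgroup $B$, and $G=\overline{B}$, the torsion-completion of $B$). This is the key structural fact that turns the extension problem into a question about completions.

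For \emph{existence}, I would argue that a surjective endomorphism $f\colon H\to H$ is automatically continuous in the $p$-adic topology, because any homomorphism of $p$-groups does not decrease heights modulo the relevant subgroups; more precisely, $f$ maps $p^nH$ into $p^nH$, hence is uniformly continuous, and so extends uniquely by continuity to a homomorphism $\varphi\colon G\to G$ on the completion. The genuinely delicate point is to show this extension $\varphi$ is \emph{surjective}. Here I would use density: $\varphi(G)$ contains $f(H)=H$, which is dense in $G$; but $\varphi(G)$ is also the continuous image of a complete (hence closed) group, so $\varphi(G)$ is closed in $G$. A closed dense subgroup equals the whole group, giving $\varphi(G)=G$. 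I would need to be careful that $\varphi(G)$ really is closed---this follows because $\varphi$, being height-nondecreasing and induced on completions, carries the compact-like completeness structure across, so the image of the complete group $G$ is complete and therefore closed in $G$.

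For \emph{uniqueness}, the argument is cleaner: if $\varphi_1,\varphi_2\colon G\to G$ both extend $f$, then $\varphi_1-\varphi_2$ vanishes on the dense subgroup $H$; since any homomorphism between $p$-groups is continuous and $G$ is Hausdorff in the $p$-adic topology (as $G$ is reduced, so $p^\omega G=\{0\}$), a continuous homomorphism vanishing on a dense subgroup vanishes identically, whence $\varphi_1=\varphi_2$.

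I expect the main obstacle to be the surjectivity of $\varphi$, since continuity and uniqueness are formal consequences of density and Hausdorffness, whereas surjectivity requires genuinely exploiting completeness of $G$. The subtle issue is verifying that $\varphi(G)$ is closed rather than merely dense: one must confirm that $\varphi$ preserves enough of the height structure so that the image of a Cauchy net remains Cauchy and its limit lies in $\varphi(G)$. An alternative, possibly more elementary route avoiding completeness-topology language would be to build preimages directly: given $g\in G$, approximate $g$ by elements of $H$ using density, lift through the surjection $f$ on each finite layer $H/p^nH$, and assemble a coherent sequence whose limit maps to $g$; this is really the same completeness argument made concrete, and I would choose whichever formulation the earlier lemmas (particularly the behaviour of basic subgroups used in Lemma~\ref{nonHprelim}) make most seamless.
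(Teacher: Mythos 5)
Your overall strategy (extend $f$ by continuity to the torsion completion, then fight for surjectivity) is the paper's strategy, and the ``alternative, more elementary route'' you sketch at the end is in fact exactly the proof the paper gives: one writes $g=\lim_i h_i$ with $h_i\in H$ and $h_{i+1}-h_i=p^i x_i$, $x_i\in H$ (possible because $H$ is pure and dense), uses surjectivity of $f$ on $H$ to choose $c_1, y_i\in H$ with $f(c_1)=h_1$, $f(y_i)=x_i$, sets $c_{i+1}=c_i+p^i y_i$ so that $\{c_i\}$ is a pure Cauchy sequence with $\varphi(c_i)=h_i$, and then invokes torsion-completeness of $G$ to obtain $c=\lim_i c_i$ with $\varphi(c)=g$. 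Your uniqueness argument via density and separability (Hausdorffness) of the reduced torsion-complete $G$ is fine and is what the cited extension theorem in Fuchs delivers.

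However, the route you put first has a genuine gap at the step ``$\varphi(G)$ is the continuous image of a complete group, hence closed.'' This is not a formal consequence of completeness: given $g$ in the closure of $\varphi(G)$, one has $g=\lim_n\varphi(a_n)$ with $\varphi(a_{n+1})-\varphi(a_n)\in p^nG$, but to produce a convergent sequence of preimages one needs these differences to lie in $\varphi(p^nG)$, i.e.\ one needs something like $p^nG\cap\varphi(G)=\varphi(p^nG)$, which is a purity assertion about the image that does not come for free; the $a_n$ themselves need not be Cauchy. (Closedness of the image can be rescued via the nontrivial classification of epimorphic images of torsion-complete groups, but that is much heavier machinery than the lemma warrants.) The honest repair is precisely the lifting argument above, which uses all three hypotheses --- purity and density of $H$, surjectivity of $f$ on $H$, and completeness of $G$ --- in an essential way. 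Since you identified that argument yourself, the proposal is salvageable as written, but you should promote the ``alternative'' to the actual proof and drop the unsupported closedness claim.
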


\begin{proof} Since $G$ is torsion-complete, one knows that $f$ is extendible to an unique endomorphism $\varphi$ of $G$ (see \cite[Volume II]{F1}). We now make sure that $\varphi$ is, in fact, surjective. To that goal, suppose $g\in G$ and $g=\lim_i h_i$ with $h_i\in H$, where $h_{i+1}-h_i=p^ix_i$ with $x_i\in H$, i.e., $\{h_i\}_i$ is a neat convergent Cauchy sequence. Letting $\varphi(y_1)=x_1$, $y_1\in H$ and $c_2:=c_1+py_1$, where $\varphi(c_1)=h_1$ and $c_1\in H$, one discovery that $\varphi(c_2)=h_1+px_1=h_2$.
	
Assume now that there exist elements $c_1,\dots,c_n\in H$ such that $\varphi(c_i)=h_i$ for $i=1,\dots, n$ and $c_{k}-c_i\in p^iH$ for $i\leq k\leq n$. Since $$h_{n+1}-h_n=p^nx_n, x_n\in H,$$ one detects that $\varphi(y_n)=x_n$ for some $y_n\in H$. Moreover, if $c_{n+1}:=c_n+p^ny_n$, then $\varphi(c_{n+1})=h_{n+1}$ and $c_{n+1}-c_n\in p^nH$. Furthermore, an easy check gives that $c_{k}-c_n\in p^nH$ for all $k\geq n$, i.e., $\{c_n\}_n$ is a pure sequence of Cauchy.
	
If now $c=\lim_n c_n$, then it follows at once that $\varphi(c)=g$. Consequently, $\varphi$ is surjective, as pursued.
\end{proof}

Thus, we now have at our disposal all the necessary information to prove the following.

\begin{proposition}\label{prop216} If $\bar{B}$ is an infinite torsion-complete $p$-group with a basic subgroup $B$, then $\bar{B}$ is not relatively Hopfian.
\end{proposition}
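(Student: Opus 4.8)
The plan is to reduce the statement to the classical fact that an infinite direct sum of cyclic $p$-groups fails to be Hopfian, by pushing a suitable bad endomorphism of the basic subgroup up to $\bar{B}$ via the extension mechanism of Lemma~\ref{extend}, and then converting "not Hopfian" into "not relatively Hopfian" using the separability of torsion-complete groups.

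First I would dispose of the bounded case. If $\bar{B}$ is bounded, then it coincides with its basic subgroup $B$, which is then an infinite bounded direct sum of cyclic groups; such a group contains a summand isomorphic to $\Z(p^n)^{(\o)}$ for some $n$, so it is not directly finite, and since $\mathcal{RH}\subseteq\mathcal{DF}$ it is not relatively Hopfian. Hence I may assume $\bar{B}$ is unbounded, so that its basic subgroup $B$ is an \emph{unbounded} direct sum of cyclic $p$-groups. The heart of the argument then uses that $\bar{B}$ is reduced and separable, i.e.\ $p^\o\bar{B}=\{0\}$ (torsion-complete groups have length at most $\o$); consequently Corollary~\ref{ulmhopfian} applies and it suffices to prove that $\bar{B}$ is not Hopfian.

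To produce a non-injective surjective endomorphism of $\bar{B}$, I would exploit that $B$ is pure in $\bar{B}$ and dense in it, the latter because $\bar{B}/B$ is divisible. Since $B$ is an unbounded direct sum of cyclic groups, it is not Hopfian, so there exists a surjective endomorphism $f\colon B\to B$ with $\Ker f\ne\{0\}$. By Lemma~\ref{extend}, $f$ extends to a surjective endomorphism $\varphi\colon\bar{B}\to\bar{B}$. As $\varphi$ restricts to $f$ on $B$, we have $\Ker f=\Ker\varphi\cap B\subseteq\Ker\varphi$, so $\Ker\varphi\ne\{0\}$ and $\varphi$ is not injective. Thus $\bar{B}$ is not Hopfian, and invoking Corollary~\ref{ulmhopfian} (or, equivalently, Theorem~\ref{bigresult}, noting that $p^\o\bar{B}=\{0\}$ forces the only candidate bounded subgroup of $p^\o\bar{B}$ to be trivial) we conclude that $\bar{B}$ is not relatively Hopfian.

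The only genuinely delicate point is guaranteeing that the kernel \emph{survives} the extension: a priori the (unique) extension $\varphi$ might be injective even though $f$ is not, and this is precisely what the identity $\Ker\varphi\cap B=\Ker f\ne\{0\}$ rules out. The remaining ingredients are standard and need only be cited or checked quickly, namely that torsion-complete groups are reduced and separable, that a basic subgroup is pure and dense, and that an unbounded direct sum of cyclic groups is never Hopfian (already noted in the excerpt).
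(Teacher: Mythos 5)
Your proof is correct and follows essentially the same route as the paper's: both dispose of the bounded case via direct finiteness, reduce the unbounded case to non-Hopficity through Corollary~\ref{ulmhopfian} (separability of $\bar{B}$), and push a non-injective surjection of a direct sum of cyclic groups up to the torsion completion using Lemma~\ref{extend}, observing that the kernel survives. The only difference is organizational: the paper first extracts a summand $C$ of $B$ with strictly increasing orders and works with the Bernoulli shift on its closure $\bar{C}$, whereas you apply the extension lemma directly to the pure dense basic subgroup $B$ of $\bar{B}$ with an arbitrary non-injective surjection.
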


\begin{proof} If $\bar{B}$ is bounded, then the result follows as in the proof of Proposition \ref{count}.
	
So, suppose that $\bar{B}$, and hence $B$ is unbounded. So, $B$ has a direct summand, $C$ say, of the form $$C = \bigoplus\limits_{i \geq 1} \langle e_i\rangle, ~ \text{where} ~ o(e) = p^{n_i} ~ \text{and} ~ n_1 < n_2 < \dots.$$ Now, if $\bar{B}$ is relatively Hopfian, then it follows from Corollary \ref{ulmhopfian} that $\bar{B}$ is Hopfian and so its direct summand $\bar{C}$ is also Hopfian according to Proposition~\ref{prop3}. However, the left Bernoulli shift $\sigma:C \to C$ defined as $$\sigma(e_1) = 0, \sigma(e_{i+1}) = e_i$$ for all $i \geq 1$ gives a surjective endomorphism of $C$ with non-trivial kernel. Now, $\sigma$ obviously extends to an endomorphism $\bar{\sigma}$ of $\bar{C}$ which again is surjective. In fact, a quick look at Lemma~\ref{extend} shows that it is workable in our situation, and thus it is clear now that $\overline \s$ is really onto and has non-trivial kernel. However, as the isomorphism $\bar{C}/ \Ker \ \bar{\sigma} \cong \bar{C}$ is valid, and since $\Ker \ \bar{\sigma} \neq \{0\}$, we thus have obtained a contradiction to $\bar{C}$ being Hopfian. So, no infinite torsion-complete $p$-group is relatively Hopfian, as expected.
\end{proof}

\section{Mixed Groups}\label{mixedgroups}

Throughout this section, recall that $T(G)=t(G)$ will denote the torsion part (often called the (maximal) torsion subgroup) of a group $G$ with $p$-components of torsion $T_p(G)=t_p(G)$. Likewise, hereafter, $G^1=\bigcap_{n\geq 1} nG$ stands for the {\it first Ulm} subgroup of $G$ (see cf. \cite{F1,F}).

\medskip

Though we are primarily concerned here with mixed groups, we state the next result in somewhat greater generality.

\begin{theorem}\label{lemma221} Suppose $B$ is a fully invariant subgroup of $G$ and $G/B$ is Hopfian. If $B$ is in one of $\mathcal H$, $\mathcal {RH}$, $\mathcal {WH}$ or $\mathcal {DF}$, then $G$ is in the same class.
\end{theorem}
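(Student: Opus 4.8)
The plan is to exploit full invariance to transport the problem from $G$ to the two ``ends'' of the exact sequence $0\to B\to G\to G/B\to 0$: to $B$, where the relevant Hopficity hypothesis lives, and to $G/B$, where we have ordinary Hopficity. The common engine, which I would record once and reuse, is this. Since $B$ is fully invariant, every endomorphism of $G$ carries $B$ into $B$ and hence induces an endomorphism of $G/B$. If $\psi:G\to G$ is any surjective endomorphism, then the induced $\bar\psi:G/B\to G/B$ is again surjective, so the Hopficity of $G/B$ forces $\bar\psi$ to be an \emph{isomorphism}. Its injectivity then yields both that $\ker\psi\subseteq B$ and, by a one-line chase (lift $b\in B$ to $g$ with $\psi(g)=b$; then $g+B\in\ker\bar\psi=\{0\}$, so $g\in B$), that $\psi$ restricts to a \emph{surjection} $\psi|_B:B\to B$ whose kernel is exactly $\ker\psi$; in particular $\psi(B)=B$.

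With this in hand the cases $\mathcal H$ and $\mathcal{WH}$ are almost immediate. For $\mathcal H$, given a surjective $\psi:G\to G$, the restriction $\psi|_B:B\to B$ is a surjective endomorphism with kernel $\ker\psi$; since $B$ is Hopfian, $\ker\psi=\{0\}$ and $G$ is Hopfian. For $\mathcal{WH}$ the only extra point is purity: if $J:=\ker\psi$ is pure in $G$ and $J\subseteq B$, then $J$ is pure in $B$ too, since $nB\cap J\subseteq nG\cap J=nJ\subseteq nB\cap J$ for every $n$. Thus $\psi|_B:B\to B$ is a surjection with pure kernel $J$, and weak Hopficity of $B$ gives $J=\{0\}$.

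The cases $\mathcal{RH}$ and $\mathcal{DF}$ require more care, because the relevant map is not an endomorphism of $G$ but a homomorphism $\phi:G\to G\oplus A$ (surjective in the $\mathcal{RH}$ case by Definition~\ref{newone}, an isomorphism in the $\mathcal{DF}$ case) with $A\neq\{0\}$, and full invariance does not apply to $\phi$ directly. The trick is to compose with the projection $\pi_G:G\oplus A\to G$: then $\psi:=\pi_G\circ\phi:G\to G$ \emph{is} an endomorphism of $G$, so the engine applies and $\bar\psi$ is an isomorphism. Since the $G$-component of any element of $\phi(B)$ lies in $\pi_G(\phi(B))=\psi(B)=B$, we get $\phi(B)\subseteq B\oplus A$. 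I would then run a short diagram chase to upgrade this to $\phi(B)=B\oplus A$: the maps $\phi$ and $\pi_G$ descend, modulo $\phi(B)$, to a surjection $\tilde\Phi:G/B\to (G\oplus A)/\phi(B)$ and a well-defined map $r:(G\oplus A)/\phi(B)\to G/B$ with $r\circ\tilde\Phi=\bar\psi$. Since $\bar\psi$ is an isomorphism, $\tilde\Phi$ is injective; being also surjective, it is an isomorphism, whence $r$ is an isomorphism and $\ker r=(B\oplus A)/\phi(B)=\{0\}$, i.e.\ $\phi(B)=B\oplus A$. Now $\phi|_B:B\to B\oplus A$ is onto, contradicting relative Hopficity of $B$; and when $\phi$ is an isomorphism, $\phi|_B$ is injective as well, so $B\cong B\oplus A$ contradicts direct finiteness of $B$. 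In either case $A=\{0\}$.

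The step I expect to be the main obstacle is precisely this last chase establishing $\phi(B)=B\oplus A$: one must see that the natural surjection $\tilde\Phi$ is \emph{forced} to be an isomorphism, so that the ``extra'' quotient $(B\oplus A)/\phi(B)$ vanishes, rather than merely that $(G\oplus A)/\phi(B)$ is an extension of $G/B$ by a quotient of $A$. That extension need not split, so a naive attempt to display $G/B\oplus A'$ as a homomorphic image of $G/B$ (mimicking the proof of Theorem~\ref{prop218}) does not go through; it is the identity $r\circ\tilde\Phi=\bar\psi$ together with the isomorphism property of $\bar\psi$ that circumvents this. Everything else --- the $\mathcal H$ and $\mathcal{WH}$ cases, the purity-descent remark, and the reduction of $\mathcal{DF}$ to the same chase --- is routine once the engine is in place.
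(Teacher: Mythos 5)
Your proposal is correct and follows essentially the same route as the paper: induce a surjective endomorphism on $G/B$ via full invariance, use Hopficity of $G/B$ to force it to be an isomorphism, conclude that the kernel lies in $B$ and that the restriction to $B$ is onto $B$ (resp.\ onto $B\oplus A$), and then invoke the hypothesis on $B$. Your extra details --- the purity descent $nG\cap J=nJ\subseteq nB\cap J$ in the $\mathcal{WH}$ case and the use of $\pi_G\circ\phi$ to justify $\phi(B)\subseteq B\oplus A$ --- merely fill in steps the paper leaves to the reader.
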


\begin{proof} Suppose $B$ is Hopfian, and $\pi:G\to G$ is a surjection with kernel $J$; we need to show $J=\{0\}$. The full invariance of $B$ implies that $\pi(B)\subseteq B$, so that there is a surjective homomorphism
	$ G/B\to G/\pi(B)\to G/B$. Since $G/B$ is Hopfian, we can infer that this homomorphism is injective. Therefore, $J\subseteq B$ and $B=\pi(B)$. So, since $B$ is Hopfian, we can conclude that $J=\{0\}$, as desired.
	
The proof when $B$ is weakly Hopfian is almost exactly the same, only supposing $J$ is pure in $G$; it is, therefore, left to the reader.
	
Suppose $B$ is relatively Hopfian. Given a surjective homomorphism $\phi:G\to G\oplus A$, we want to show $A=\{0\}$. The full invariance of $B$ shows that $\phi(B)\subseteq B\oplus A$.
	
It follows that the composition
	\[
	G/B\to (G\oplus A)/\phi(B)\to (G\oplus A)/(B\oplus A)\cong G/B
	\]
will be surjective, and hence injective. Thus, $\phi(B)=B\oplus A$. And since $B$ is relatively Hopfian, we can derive $A=\{0\}$, as desired.
	
The proof when $B$ is directly finite is almost exactly the same, except $\phi$ will be an isomorphism; so it is also left to the reader.
\end{proof}

The following result, which is a direct consequence of Theorem~\ref{lemma221}, shows that one direction in the analogue of Corollary~\ref{torsionsplit} for \textit{non-torsion-splitting} mixed groups is, actually, valid.

\begin{corollary}\label{quotients} Suppose $G$ is group with torsion subgroup $T$ such that $G/T$ is Hopfian (equivalently, relatively Hopfian or weakly Hopfian). If $T$ is in $\mathcal H$, $\mathcal {RH}$, $\mathcal {WH}$ or $\mathcal {DF}$, then $G$ is in the same class.
\end{corollary}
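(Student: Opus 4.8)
The plan is to recognize this statement as an immediate instance of Theorem~\ref{lemma221}, taking the fully invariant subgroup there to be $B = T$, the torsion subgroup of $G$. First I would record the standard structural fact that $T = T(G)$ is fully invariant in $G$: if $\phi : G \to G$ is any endomorphism and $t \in T$ satisfies $nt = 0$ for some positive integer $n$, then $n\phi(t) = \phi(nt) = \phi(0) = 0$, so $\phi(t) \in T$. Hence every endomorphism of $G$ carries $T$ into itself, which is exactly the hypothesis on $B$ demanded by Theorem~\ref{lemma221}.

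Next I would dispose of the parenthetical equivalence appearing in the statement. The quotient $G/T$ is torsion-free, since any coset $x + T$ with $n(x + T) = T$ for some $n \geq 1$ satisfies $nx \in T$, forcing $x \in T$ and thus $x + T = T$. Consequently Theorem~\ref{suggestion} applies to $G/T$ and tells us that, for this torsion-free group, the three properties of being Hopfian, relatively Hopfian, and weakly Hopfian coincide. This both justifies the wording of the hypothesis and guarantees that, regardless of which of the three assumptions we are handed on $G/T$, we may legitimately treat $G/T$ as Hopfian, which is the form in which Theorem~\ref{lemma221} requires it.

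With both hypotheses of Theorem~\ref{lemma221} now in place---namely that $T$ is fully invariant in $G$ and that $G/T$ is Hopfian---the conclusion follows at once: if $T$ belongs to any one of the classes $\mathcal H$, $\mathcal{RH}$, $\mathcal{WH}$, or $\mathcal{DF}$, then $G$ belongs to the same class. Since the corollary is a direct specialization of the preceding theorem, I do not anticipate any genuine obstacle; the only points meriting even a word of comment are the two routine observations above, both of which are entirely standard facts concerning the torsion subgroup and its quotient.
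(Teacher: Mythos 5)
Your proposal is correct and matches the paper's intent exactly: the paper presents this corollary as a direct consequence of Theorem~\ref{lemma221} applied with $B = T$, relying on the full invariance of the torsion subgroup and on Theorem~\ref{suggestion} to reconcile the three equivalent hypotheses on the torsion-free quotient $G/T$. Your two supporting observations are precisely the routine facts the paper leaves implicit.
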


And since every finite rank torsion-free group is Hopfian, and hence relatively Hopfian, we have following special case of the last result:

\begin{corollary}\label{lemma220} If $G$ is a group of finite torsion-free rank and its torsion subgroup is in $\mathcal H$, $\mathcal {RH}$, $\mathcal {WH}$ or $\mathcal {DF}$, then $G$ is in the same class.
\end{corollary}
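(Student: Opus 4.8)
The statement to prove is Corollary~\ref{lemma220}: if $G$ has finite torsion-free rank and its torsion subgroup $T$ lies in one of $\mathcal H$, $\mathcal{RH}$, $\mathcal{WH}$ or $\mathcal{DF}$, then $G$ itself lies in the same class. The natural plan is to reduce this immediately to the already-established Corollary~\ref{quotients}, whose hypothesis requires that $G/T$ be Hopfian. Since $T$ is the torsion subgroup of $G$, the quotient $G/T$ is torsion-free of rank equal to the torsion-free rank of $G$, which is finite by assumption. Thus the whole corollary hinges on the single classical fact that a torsion-free group of finite rank is Hopfian.

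First I would invoke the finite-rank-torsion-free Hopficity fact, which the paper has already flagged as ``well-known (and easily checked)'' in the remark preceding Corollary~\ref{finiteprank}, and which is also implicitly used in Proposition~\ref{twosix}(a). Concretely, $G/T$ is a torsion-free group whose rank is the torsion-free rank of $G$, hence finite, so $G/T$ is Hopfian. This establishes the hypothesis of Corollary~\ref{quotients}. (By Theorem~\ref{suggestion}, for torsion-free groups being Hopfian, relatively Hopfian, and weakly Hopfian all coincide, which is exactly why the parenthetical ``equivalently, relatively Hopfian or weakly Hopfian'' in Corollary~\ref{quotients} causes no trouble.)

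Second, with $G/T$ Hopfian in hand and $T$ assumed to lie in whichever of the four classes is under consideration, I would apply Corollary~\ref{quotients} directly to conclude that $G$ lies in that same class. No further case analysis is needed, because Corollary~\ref{quotients} already treats all four classes uniformly. The argument is therefore essentially a one-line deduction: verify the Hopficity of the torsion-free quotient from finiteness of rank, then quote the previous corollary.

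The only place requiring genuine content is the finite-rank torsion-free Hopficity fact, and even this is routine: a surjective endomorphism $\phi$ of a torsion-free group $X$ of finite rank $n$ induces a surjective $\Q$-linear endomorphism of the $n$-dimensional vector space $\Q\otimes_{\Z} X$, which must then be injective by finite-dimensional linear algebra; pulling back, the kernel of $\phi$ is a torsion subgroup of the torsion-free group $X$, hence trivial. So I do not anticipate any real obstacle; the ``hard part,'' such as it is, is purely bookkeeping, namely confirming that the torsion-free rank of $G$ equals the rank of $G/T$ so that the finiteness hypothesis transfers cleanly to the quotient. Everything else is an appeal to Corollary~\ref{quotients}.
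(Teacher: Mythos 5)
Your proposal is correct and matches the paper's argument exactly: the paper also deduces this corollary from Corollary~\ref{quotients} by observing that $G/T$ is torsion-free of finite rank, hence Hopfian. Your additional verification of the finite-rank Hopficity fact via $\Q\otimes_{\Z} X$ is sound but is treated as well-known in the paper.
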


We also have the following partial converse to this result:

\begin{corollary}
If $G$ is a weakly Hopfian mixed group, then its torsion subgroup $T$ is weakly Hopfian.
\end{corollary}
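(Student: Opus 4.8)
The plan is to argue by contraposition: assuming the torsion subgroup $T$ fails to be weakly Hopfian, I will exhibit inside $G$ a direct summand of the prohibited form $A^{(\omega)}$ with $A\ne\{0\}$, which forces $G\cong G\oplus A$ and hence shows that $G$ is not directly finite, contradicting $\mathcal{WH}\subseteq\mathcal{DF}$ (Theorem~\ref{containments}).

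First I would localise the failure at a single prime. Writing $T=\bigoplus_{p}T_p$ for the primary decomposition and invoking Proposition~\ref{allrel} (for the class $\mathcal{WH}$), the hypothesis $T\notin\mathcal{WH}$ yields a prime $p$ with $T_p\notin\mathcal{WH}$. Applying the equivalence of (a) and (b) in Theorem~\ref{torsionweak} to the $p$-group $T_p$, I obtain a direct summand of $T_p$ isomorphic to $A^{(\omega)}$, where $A$ is co-cyclic; that is, $A\cong\Z(p^k)$ for some $k\in\N$ or $A\cong\Z(p^\infty)$.

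Next I would transport this summand up to $G$. Since the torsion subgroup is always pure in $G$ and purity is transitive, the chain ``$A^{(\omega)}$ is a summand of $T_p$, $T_p$ is a summand of $T$, and $T$ is pure in $G$'' shows that the copy of $A^{(\omega)}$ sitting inside $G$ is pure in $G$. Now the key dichotomy: if $A\cong\Z(p^k)$ is bounded, then $A^{(\omega)}$ is a bounded pure subgroup of $G$, hence a direct summand of $G$; if instead $A\cong\Z(p^\infty)$, then $A^{(\omega)}$ is divisible and is therefore again a direct summand of $G$. In either case $G$ possesses a direct summand of the form $A^{(\omega)}$ with $A\ne\{0\}$.

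Finally, as noted after Corollary~\ref{cor2.4}, a group with a summand $A^{(\omega)}$ satisfies $G\cong G\oplus A$ and so lies outside $\mathcal{DF}$; since $\mathcal{WH}\subseteq\mathcal{DF}$, this contradicts the weak Hopficity of $G$, completing the argument. I expect the only step requiring genuine care to be the transfer of the witnessing summand from $T$ to $G$: specifically, justifying that the bounded-versus-divisible dichotomy for the co-cyclic block $A$ is precisely what guarantees, via purity of $T$ in $G$ together with the splitting of bounded pure subgroups and of divisible subgroups, that this block survives as a direct summand of the whole group $G$.
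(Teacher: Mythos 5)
Your argument is correct and follows essentially the same route as the paper's own proof: localise the failure at a prime via Proposition~\ref{allrel}, extract a co-cyclic summand $A^{(\omega)}$ of $T_p$ via Theorem~\ref{torsionweak}, and observe that this summand, being bounded or divisible (equivalently, pure-injective), splits off from $G$ itself, contradicting direct finiteness. The only cosmetic difference is that you spell out the purity-plus-bounded/divisible dichotomy where the paper simply invokes pure-injectivity; these are the same fact.
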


\begin{proof} If we assume the opposite that $T$ is not weakly Hopfian, then thanks to Proposition~\ref{allrel} and Theorem~\ref{torsionweak} there is a prime $p$ and a summand of $T$ of the form $A^{(\o)}$, where $A$ is a co-cyclic $p$-group. This summand of $T$ will be either bounded or divisible. Since all such groups are pure-injective, this is also a direct summand of $G$, so that $G$ is not weakly Hopfian, contradicting the initial hypothesis.
\end{proof}

The next result shows the equivalence of relative Hopficity and ordinary Hopficity for arbitrary direct sum of cyclic groups.

\begin{proposition}\label{sums} If $G$ is a direct sum of cyclic groups, then the following are equivalent:

\medskip
	
(i) $G$ is relatively Hopfian;

\medskip

(ii) $G$ is Hopfian;

\medskip

(iii) The torsion-free part and each $p$-component have finite ranks.
\end{proposition}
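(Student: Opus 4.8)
The plan is to prove the equivalences by establishing (ii) $\Leftrightarrow$ (iii) as a known fact and then closing the loop through (i). The implication (ii) $\Rightarrow$ (i) is immediate from Theorem~\ref{containments}, since $\mathcal{H}\subseteq\mathcal{RH}$ holds for all groups. So the real work lies in showing (i) $\Rightarrow$ (iii) and (iii) $\Rightarrow$ (ii).

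First I would dispose of (iii) $\Rightarrow$ (ii). Write $G=G_0\oplus(\bigoplus_{p\in\mathcal P}T_p)$, where $G_0$ is a free (torsion-free) direct sum of cyclic groups and each $T_p$ is the $p$-primary component, itself a direct sum of cyclic $p$-groups. By hypothesis $G_0$ has finite rank, so it is finitely generated free and hence Hopfian; and each $T_p$ has finite $p$-rank, so being a direct sum of cyclic groups of finite rank it is finite, hence Hopfian. A finitely generated group is Hopfian, and I would then observe that $G$ itself is finitely generated: finite torsion-free rank forces $G_0$ finitely generated, finite $p$-rank for each $p$ forces each $T_p$ finite, and since $G$ is a direct sum of cyclic groups with all ranks finite, $G$ is actually finitely generated. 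A finitely generated abelian group is Hopfian (it is Noetherian, and a surjective endomorphism of a Noetherian module is injective), giving (ii).

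For (i) $\Rightarrow$ (iii) I would argue by contrapositive and appeal to the structural obstructions already developed. If the torsion-free rank is infinite, then $G$ has a direct summand isomorphic to $\Z^{(\omega)}$, whence $G\cong G\oplus\Z$, so $G$ is not even directly finite and hence not relatively Hopfian by Theorem~\ref{containments}. If instead some $p$-component $T_p$ has infinite $p$-rank, then since $T_p$ is a direct sum of cyclic $p$-groups with infinitely many summands, an infinite collection of these summands of a common order $p^{k}$ would yield a summand $\Z(p^{k})^{(\omega)}$ and again destroy direct finiteness; if no single order repeats infinitely often, then $T_p$ is an unbounded direct sum of cyclic $p$-groups, which is $p$-semi-standard and separable, so by Corollary~\ref{ulmhopfian} relative Hopficity of $T_p$ would coincide with Hopficity, yet an unbounded direct sum of cyclic $p$-groups is never Hopfian. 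Either way $T_p$ fails to be relatively Hopfian, and since $T_p$ is a direct summand of $G$, Proposition~\ref{prop3} forces $G$ to fail as well.

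The main obstacle is handling the $p$-component case cleanly, since one must separate the bounded and unbounded subcases and invoke the right prior result in each: the appearance of a homogeneous summand $\Z(p^{k})^{(\omega)}$ in the bounded-order case versus the separability argument via Corollary~\ref{ulmhopfian} in the genuinely unbounded case. Once both subcases route through the failure of relative Hopficity for $T_p$ and are lifted to $G$ by Proposition~\ref{prop3}, the equivalences close up, so that (i), (ii), and (iii) are mutually equivalent.
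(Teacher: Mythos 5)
Your overall logical cycle ((ii) $\Rightarrow$ (i) $\Rightarrow$ (iii) $\Rightarrow$ (ii)) is sound, and your (i) $\Rightarrow$ (iii) argument is correct and close in spirit to the paper's use of Theorem~\ref{suggestion}, Corollary~\ref{ulmhopfian} and Proposition~\ref{prop3}. However, there is a genuine error in your proof of (iii) $\Rightarrow$ (ii): you claim that under hypothesis (iii) the group $G$ is finitely generated. This is false. The torsion part is $T=\bigoplus_{p\in\mathcal P}T_p$ with the sum ranging over \emph{all} primes, and (iii) only bounds the rank of each individual $T_p$; it does not bound the number of primes for which $T_p\ne\{0\}$. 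The group $\bigoplus_{p}\Z(p)$, summed over all primes, is a direct sum of cyclic groups satisfying (iii) (torsion-free rank $0$, each $p$-component of rank $1$) but is not finitely generated, so the Noetherian argument you invoke does not apply to it.

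The conclusion of (iii) $\Rightarrow$ (ii) is nevertheless true, and the repair is available from results already in the paper: each $T_p$ is finite, hence Hopfian, so $T$ is Hopfian by (the $\mathcal H$ case of) Proposition~\ref{allrel}; since $T$ is fully invariant in $G$ and $G/T\cong F$ is free of finite rank, hence Hopfian, Theorem~\ref{lemma221} yields that $G$ is Hopfian. (This is essentially how the paper proceeds, except that it runs the cycle in the other direction, proving (iii) $\Rightarrow$ (i) directly via Theorem~\ref{lemma221} and then (i) $\Rightarrow$ (ii) by the separability argument you also use.) With that one step replaced, your proof is correct.
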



\begin{proof} Let $G = F \oplus T$, where $F$ is free and $T = \bigoplus_pT_p$ is the primary decomposition of $T$. Then (ii) implies (iii) by standard properties of Hopfian groups; (iii) implies (i) follows from Theorem~\ref{lemma221} since $T$ is fully invariant in $G$. Thus, it remains to show that (i) implies (ii).
	
So, suppose that $G = F \oplus T$ is relatively Hopfian. Then, $F$ and each primary component $T_p$ are relatively Hopfian as direct summands of $G$. Now, $F$ is also torsion-free, so appealing to Theorem~\ref{suggestion} the group $F$ is Hopfian. Furthermore, each $T_p$ is separable, so it follows from Corollary~\ref{ulmhopfian} that each $T_p$ is Hopfian. An application of Proposition~\ref{allrel} gives that $T$ is then Hopfian and, as $T$ is fully invariant in $G$ with torsion-free quotient, the group $G$ itself is Hopfian by a well-known property of Hopfian groups, as pursued.
\end{proof}

We continue to explore situations in which relatively Hopfian groups are always Hopfian.

\begin{proposition}\label{cotorsion} A reduced relatively Hopfian cotorsion group is a Hopfian algebraically compact group.
\end{proposition}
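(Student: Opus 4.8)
The plan is to reduce, via Harrison's structure theory of cotorsion groups (see \cite{F1,F}), to two well-understood pieces and to isolate a single boundedness statement as the real content. First I would split $G=A\oplus C$, where $A$ is a reduced torsion-free algebraically compact group and $C=t(G)^\bullet$ is the adjusted cotorsion hull of the torsion subgroup $T:=t(G)$, so that $t(C)=T$ and $C/T$ is torsion-free divisible. Since summands of relatively Hopfian groups are relatively Hopfian (Proposition~\ref{prop3}), both $A$ and $C$ are relatively Hopfian. The torsion-free summand is immediately settled: a reduced torsion-free cotorsion group is algebraically compact, and, being torsion-free and relatively Hopfian, it is Hopfian by Theorem~\ref{suggestion}. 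Writing $C=\prod_{p}C_p$ with $C_p=(T_p)^\bullet$ the $p$-adic cotorsion hull of the $p$-component $T_p$ of $T$, each $C_p$ is a direct factor of $C$ and hence relatively Hopfian.

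Next I would pin down the primary components. As $G$ is relatively, hence weakly, Hopfian, its torsion subgroup $T$ is weakly Hopfian (by the corollary on torsion subgroups of weakly Hopfian mixed groups proved above), so Theorem~\ref{torsionweak} shows each $T_p$ is directly finite; since $G$ is reduced there is no divisible part, and therefore each $T_p$ is a reduced $p$-semi-standard $p$-group. The crux is the claim that each $T_p$ is in fact \emph{bounded}: a bounded $p$-semi-standard group is finite, so this would force every $T_p$ to be finite, whence $T_p$ is algebraically compact and equals its own cotorsion hull, giving $C_p=T_p$, then $C=\prod_pT_p$, and finally $G=A\oplus\prod_pT_p$, a product of algebraically compact groups, hence algebraically compact.

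The main obstacle is establishing this boundedness. Arguing by contradiction, suppose some $T_p$ is unbounded; being $p$-semi-standard, its torsion-completion $\overline{T_p}$ is an infinite torsion-complete $p$-group, which by Proposition~\ref{prop216} is not relatively Hopfian. The difficulty is that $\overline{T_p}$ embeds in $C_p$ neither as a subgroup (its torsion would exceed $t(C_p)=T_p$) nor obviously as a summand, so the failure of Hopficity for $\overline{T_p}$ must be transferred to $C_p$ as a genuine surjection. My plan here is to start from a Bernoulli-type shift on an unbounded basic subgroup of $T_p$ -- exactly the construction powering Proposition~\ref{prop216} -- producing a surjective endomorphism of $T_p$ with non-trivial bounded kernel, and then to lift it through the cotorsion hull $C_p$ using the extension (pure-injectivity-flavoured) properties of cotorsion groups, so as to manufacture a surjection $C_p\to C_p\oplus\Z(p)$; by Proposition~\ref{characterize} this contradicts relative Hopficity of $C_p$. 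Making this lift precise, and ensuring both surjectivity and the surviving $\Z(p)$-summand, is the delicate point I expect to absorb most of the effort.

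Finally, I would derive Hopficity cheaply from the structure now in hand. Relative Hopficity forces each $p$-adic component of $A$ to have finite rank (an infinite-rank completed free $\Z_p$-module carries a surjective shift with non-zero kernel, violating Hopficity of that factor), so $A/nA$ is finite for every $n$ and $A^1=\{0\}$; the same holds for $\prod_pT_p$ since each $T_p$ is finite. Hence $G/nG$ is finite for all $n$, while $G^1=\bigcap_n nG=\{0\}$. Any surjective endomorphism $\psi$ of $G$ then induces a surjective, therefore bijective, map on each finite quotient $G/nG$, which forces $\ker\psi\subseteq\bigcap_n nG=\{0\}$; thus $\psi$ is injective and $G$ is Hopfian, completing the proof.
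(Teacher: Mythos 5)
Your proposal is correct and follows essentially the same route as the paper: the Harrison decomposition $G=A\oplus C$, finiteness of the $p$-ranks of the torsion-free algebraically compact part, and, for the adjusted part, a Bernoulli-type surjection $B\to B\oplus X$ on an unbounded basic subgroup of $T_p$ extended through the cotorsion group to contradict relative Hopficity, forcing each $T_p$ to be finite. The extension step you flag as delicate is exactly what the paper disposes of by citing \cite[\S 55, Exercise 1]{F1}, and your closing argument via the finite quotients $G/nG$ together with $G^1=\{0\}$ is only a cosmetic variant of the paper's appeal to the full invariance of $C$ in $G=A\oplus C$.
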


\begin{proof} It follows from \cite[Theorem 55.5]{F1} that we can write $G=A\oplus C$, where $A$ is an algebraically compact torsion-free group and $C$ is a regulated cotorsion group.
	
Knowing that $A$ can be presented as $A=\prod_p A_p$ (see, e.g., \cite[\S 54]{F1}), where $A_p$ is the $p$-adic algebraically compact component of $A$, we derive that each $A_p$ is directly finite. So, it follows from this fact that its $p$-rank (equaling to that of $A/pA$) has to be finite, whence $A$ is Hopfian.
	
Moreover, the group $C$ also is presentable as $C=\prod_p C_p$, where $C_p$ is the $p$-adic module such that $C_p/T_p(C)$ is torsion-free divisible. Claim that each $C_p$ is a finite $p$-group. To show that, observe that if $T_p=T_p(C)$ is bounded, then $C_p=T_p$ and $C_p$ is finite by Corollary~\ref{prop3}.
	
Suppose now that $T_p$ is {\it not} bounded and let $B$ be a basic subgroup in $T_p$. Then, there exists a surjection
$\eta: B\to B\oplus X$ with non-zero kernel, where, as noticed above, $X$ is chosen to be a non-zero cyclic direct summand of $B$. Since $B$ and $B\oplus X$ are both torsion, we may extend $\eta$ to a surjection
$\overline{\eta}: C\to C\oplus X$ (e.g., \cite[\S 55, Exercise 1]{F1}), that forces $$C/H=(K/H)\oplus (L/H)$$ for some $\{0\}\neq H\leq C$, where $K/H\cong C$ and $L/H\cong X$. But, this makes no sense for relatively Hopfian groups.
	
So, both $A$ and $C$ are algebraically compact Hopfian groups, and because $C$ is fully invariant in $G=A\oplus C$, we infer that $G$ is also a Hopfian algebraically compact group, as stated.
\end{proof}

By analogy with \cite{AGW}, recall that a reduced mixed group $G$ with an infinite number of non-zero primary components $T_p(G)$ is said to be an {\it sp--group} if $G$ is a pure subgroup of the Cartesian product $\prod_p T_p(G)$. Note that the torsion subgroup $T(G)$ of $G$ is the corresponding direct sum $\bigoplus_p T_p(G)$, and the factor-group $G/T(G)$ is always divisible. For further details of sp-groups, we also refer to \cite{Ka} or \cite{KTT}.

\medskip

We now have the necessary machinery to state and prove the following.

\begin{proposition} Let $G$ be an sp-group of finite torsion-free rank, then $G$ is relatively Hopfian if, and only if, all the primary components $T_p(G)$ are relatively Hopfian.
\end{proposition}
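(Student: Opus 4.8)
The statement asserts that for an sp-group $G$ of finite torsion-free rank, $G$ is relatively Hopfian if and only if each primary component $T_p(G)$ is relatively Hopfian. The forward direction is immediate: each $T_p(G)$ is a direct summand of $T(G)$, which is a fully invariant (hence direct-summand-related) subgroup; more directly, since $G$ is relatively Hopfian, its torsion subgroup $T(G)$ is relatively Hopfian by the partial-converse corollary for mixed groups, and then each $T_p(G)$ is relatively Hopfian as a direct summand of $T(G)$ by Proposition~\ref{prop3} together with Proposition~\ref{allrel}. So the entire content lies in the converse.

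For the converse, the natural strategy is to invoke Corollary~\ref{quotients}: if the torsion subgroup $T = T(G) = \bigoplus_p T_p(G)$ lies in $\mathcal{RH}$ and the quotient $G/T$ is Hopfian, then $G$ itself lies in $\mathcal{RH}$. Thus the plan reduces to verifying two things. First, I would show $T \in \mathcal{RH}$: assuming each $T_p(G)$ is relatively Hopfian, Proposition~\ref{allrel} yields directly that their direct sum $T$ is relatively Hopfian, since $T$ is precisely the primary decomposition $\bigoplus_p T_p(G)$. Second, I would verify that $G/T$ is Hopfian. Here is where the sp-group hypothesis and the finite torsion-free rank hypothesis enter: by the definition of an sp-group recalled just before the statement, $G/T(G)$ is always divisible, and since $G$ has finite torsion-free rank, $G/T$ is a divisible torsion-free group of finite rank, i.e., a finite direct sum of copies of $\mathbb{Q}$. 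Such a group is Hopfian by Proposition~\ref{twosix}(a) (torsion-free of finite rank), so the hypothesis of Corollary~\ref{quotients} is met.

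Having assembled $T \in \mathcal{RH}$ and $G/T$ Hopfian, Corollary~\ref{quotients} delivers $G \in \mathcal{RH}$, completing the converse and the proof.

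The main subtlety I anticipate is ensuring that the finite torsion-free rank hypothesis really forces $G/T$ to be \emph{Hopfian} and not merely divisible. The sp-group definition guarantees divisibility of $G/T$ for free, but divisibility alone is not enough for Hopficity (indeed $\mathbb{Q}^{(\omega)}$ is divisible, torsion-free, and non-Hopfian). It is precisely the finiteness of the torsion-free rank that pins $G/T$ down to a finite-rank divisible torsion-free group, and hence to a Hopfian one. I would make sure to state this step explicitly, citing Proposition~\ref{twosix}(a) or the elementary fact that a torsion-free group of finite rank is Hopfian, rather than leaving it implicit. Everything else is a direct citation of the established machinery, so apart from this one point the argument is routine.
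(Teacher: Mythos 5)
Your converse direction is exactly the paper's argument: Proposition~\ref{allrel} gives that $T(G)=\bigoplus_p T_p(G)$ is relatively Hopfian, the sp-group definition together with the finite torsion-free rank hypothesis makes $G/T(G)$ a divisible torsion-free group of finite rank, hence Hopfian, and Corollary~\ref{quotients} (in the form of Corollary~\ref{lemma220}) finishes. The gap is in your forward direction. The ``partial-converse corollary for mixed groups'' you invoke is stated and proved in the paper only for \emph{weakly} Hopfian groups: its proof detects a bounded or divisible summand $A^{(\omega)}$ of $T(G)$ and uses pure-injectivity to lift it to $G$. Nothing in the paper (or in your argument) shows that the torsion subgroup of a \emph{relatively} Hopfian group is relatively Hopfian, and since weak Hopficity is strictly weaker than relative Hopficity on torsion groups, the weak version cannot be upgraded. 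Likewise, the observation that $T(G)$ is fully invariant does not transfer relative Hopficity downward --- Theorem~\ref{lemma221} runs in the opposite direction, from the subgroup to the whole group --- and an sp-group with infinitely many non-zero components never splits over its torsion subgroup, so you cannot treat $T(G)$ as a summand of $G$.

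The correct and short repair, which is what the paper actually does, uses the sp-structure directly: since $\prod_q T_q(G)=T_p(G)\oplus\prod_{q\neq p}T_q(G)$ and $T_p(G)\leq G\leq \prod_q T_q(G)$, one gets $G=T_p(G)\oplus\bigl(G\cap\prod_{q\neq p}T_q(G)\bigr)$, so each $T_p(G)$ is a direct summand of $G$ itself; Proposition~\ref{prop3} then immediately yields that each $T_p(G)$ is relatively Hopfian. With that substitution your proof agrees with the paper's.
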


\begin{proof} Since each $T_p(G)$ is a direct summand of the Cartesian product $\prod_q T_q(G)$, it is a direct summand of $G$ and hence Proposition \ref{prop3} guarantees that $T_p(G)$ is relatively Hopfian.
	
Conversely, assuming that each $T_p(G)$ is relatively Hopfian, then in virtue of Proposition \ref{allrel}, $T(G)$ is relatively Hopfian. Since $G/T(G)$ is torsion-free of finite rank, it follows from Proposition \ref{lemma220} that $G$ is relatively Hopfian, as formulated.
\end{proof}

We have seen in Proposition \ref{prop3} that the class of relatively Hopfian groups is closed under direct summands. Unfortunately, the situation in regard to the formation of direct sums is not so nice since the class $\mathcal {RH}$ will inherit the pathologies associated with Hopfian groups (compare also with \cite{Ch} and \cite{C1}).

\begin{example} Consider the group $G = A \oplus B$ exhibited by Corner in \cite[Example 2]{C2}. Here both $A,B$ are torsion-free Hopfian groups but their direct sum is not Hopfian, and since these groups are torsion-free, they are relatively Hopfian. However, $G$ cannot be relatively Hopfian, since it is not Hopfian.
\end{example}

As another example of the pathology associated with the formation of direct sums, consider \cite[Example 3]{C2} and the work of Goldsmith and V\'amos in \cite{GV}.

\begin{example}\label{ex3} For any positive integer $n \geq 2$, there is a Hopfian torsion-free group $G$ such that the direct sum $G^{(k)}$ is Hopfian for all integers $1 \leq k < n$, but $G^{(n)}$ is {\it not} Hopfian. Since the groups in question are all torsion-free, it is immediate that the Hopfian property may be replaced by the relative Hopfian property, so that  $G^{(k)}$ is relatively Hopfian for all integers $1 \leq k < n$, but $G^{(n)}$ is {\it not} relatively Hopfian and of course relative Hopficity is lost for all higher powers.
\end{example}
			
Recall a consequence of the discussions after Corollary~\ref{cor2.4} that we will use in our next examples: if $G$ is a reduced relatively Hopfian group, then each $p$-component $T_p(G)$ of $G$ is semi-standard.
	
\begin{proposition}\label{mix1}
Let $G$ be a reduced relatively Hopfian group with torsion subgroup $T = \bigoplus_p T_p$  and $G/T$ is Hopfian. Then, if either (i) $T$ is separable, or (ii) $p^{\omega}(T_p)$ is Hopfian for every prime $p$ (i.e., $T^1$ is Hopfian), then $G$ is Hopfian.
\end{proposition}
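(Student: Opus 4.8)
The plan is to argue by contradiction directly at the level of $G$, manufacturing a surjective endomorphism with a $p$-height-$0$ kernel element and invoking Lemma~\ref{hzero}. Suppose $\pi\colon G\to G$ is a surjection with kernel $K$; the goal is to show $K=\{0\}$. Since $T$ is fully invariant, $\pi(T)\subseteq T$ and $\pi$ induces a surjection $G/T\to G/T$, which is injective because $G/T$ is Hopfian. From this one reads off that $\pi^{-1}(T)=T$, hence $K\subseteq T$ and $\pi(T)=T$. Because each primary component $T_p$ is fully invariant, $\pi$ restricts to a surjective endomorphism $\pi\restriction_{T_p}\colon T_p\to T_p$ with kernel $K_p:=K\cap T_p$, and $K=\bigoplus_p K_p$ with $T_p/K_p\cong T_p$ for every $p$. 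Thus it suffices to prove that each $K_p=\{0\}$.

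Next I would split according to whether $K_p\subseteq p^\omega T_p$. If $K_p\subseteq p^\omega T_p$, then $p^\omega(T_p/K_p)=p^\omega T_p/K_p$, and since $T_p/K_p\cong T_p$ this yields $p^\omega T_p/K_p\cong p^\omega T_p$; under hypothesis (i) we have $p^\omega T_p=\{0\}$, while under hypothesis (ii) the group $p^\omega T_p$ is Hopfian, so the resulting surjective endomorphism of $p^\omega T_p$ is injective -- in either case $K_p=\{0\}$. If instead $K_p\not\subseteq p^\omega T_p$, then the minimal $p$-height $m$ of $K_p$ computed in $T_p$ is finite. Here I use that $T_p$ is $p$-semi-standard, which holds because $G$ is a reduced relatively Hopfian group (as recalled just above); since $T_p/K_p\cong T_p$, Lemma~\ref{mequalzero} applies and forces $m=0$.

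The clinching step is to transfer this height-$0$ witness from $T_p$ to $G$. Pick $x\in K_p$ whose $p$-height in $T_p$ is $0$. The elementary observation $pG\cap T_p=pT_p$ -- valid because $pg\in T_p$ forces $g$ to be a $p$-element, hence $g\in T_p$ -- shows that $x\notin pG$, i.e. $x$ also has $p$-height $0$ in $G$. Since $x\in K$ and $G/K\cong G$, the subgroup $K$ satisfies the hypotheses of Lemma~\ref{hzero} for the prime $p$, so $G$ is not relatively Hopfian, contradicting our standing assumption. Consequently $K_p=\{0\}$ for all $p$, whence $K=\{0\}$ and $G$ is Hopfian.

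The main obstacle to anticipate is that one should resist the tempting route ``each $T_p$ is relatively Hopfian, hence Hopfian by Corollary~\ref{ulmhopfian}'': relative Hopficity of the mixed group $G$ does not visibly descend to the primary component $T_p$, since $T_p$ need not be a direct summand of $G$ and a bad surjection $T_p\to T_p\oplus\Z(p)$ or $T_p\to T_p\oplus\qc$ cannot in general be lifted to $G$. The device that avoids this difficulty is to run the whole argument for $G$ itself, producing a single height-$0$ element of the kernel $K$ and applying Lemma~\ref{hzero} to $G$; the only point needing care is the passage $pG\cap T_p=pT_p$, which is what legitimately moves the height-$0$ condition from $T_p$ up to $G$.
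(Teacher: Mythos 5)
Your proof is correct and follows essentially the same route as the paper: push the kernel into $T$ using Hopficity of $G/T$, decompose primewise, invoke semi-standardness of each $T_p$ together with Lemma~\ref{mequalzero} and Lemma~\ref{hzero} when the minimal $p$-height is finite, and use Hopficity (or triviality) of $p^{\omega}T_p$ otherwise. The only cosmetic differences are that you phrase everything via a surjective endomorphism rather than an isomorphism $G\cong G/H$, and you unify cases (i) and (ii) under the single dichotomy $K_p\subseteq p^{\omega}T_p$ or not; your careful verification of $pG\cap T_p=pT_p$ is exactly the height-transfer point the paper also needs.
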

			
\begin{proof} One observes that, if $G\cong G/H$ for some non-zero subgroup $H$, then $F/H=T(G/H)$ for some $F \leq G$ and, therefore, $T + H\leq F$. So, we obtain that $$G/T\cong G/F \cong (G/T)/(F/T).$$
Since $G/T$ is Hopfian, we must have that $F=T = T+H$, i.e., $H\leq T$.

\medskip

We now distinguish the two critical cases:
				
\medskip
				
\textbf{Case (i):} $T$ is separable.

\medskip
				
Let $H = \bigoplus_p H_p$ be the standard primary decomposition of $H$; note that $H_p \leq T_p$, for each prime $p$, but some $H_p$ may be zero. Also, note as $G$ is relatively Hopfian, it follows, as indicated above, that each $p$-component $T_p$ is a semi-standard $p$-group.

Now, one finds that $T(G/H) = T/H$ since $H \leq T$, so taking $p$-primary components, we get that $T_p \cong T_p/H_p$. Since we are assuming $T$ is separable, each $T_p$ is a separable $p$-group too.
					
As $ H = \bigoplus_q H_q$, an arbitrary element, $x$ say, of $H$ may be expressed as a finite vector $x = (\dots, x_q, \dots)$ with each $x_q \in H_q$. For a fixed prime $q$, all $H_r$, with $r$ a prime different from $q$, are $q$-divisible and so $\mid x \mid^{q}_{T} = \mid x_q \mid^{q}_{T_q}$; indeed, since $T, T_q$ are pure in $G$, one has that $\mid x \mid^{q}_{G} = \mid x_q \mid^{q}_{T_q}$.
				
Fix, as we may, a prime $p$ such with $H_p$ non-zero. Then, $\min \mid H \mid^{p}_{G} = \min \mid H_p \mid^{p}_{T_p}$ is a finite integer since $T_p$ is separable. Furthermore, since $T_p/H_p \cong T_p$ and $T_p$ is semi-standard, we have from Lemma~\ref{mequalzero} that $\min \mid H \mid^{p}_G = 0$. It, thus, follows from Lemma~\ref{hzero} that $G$ is not relatively Hopfian. This contradiction forces $H = \{0\}$ and $G$ is then Hopfian, as required.
					
\medskip
				
\textbf{Case (ii):} $T^1$ is Hopfian.

\medskip
				
If $\min \mid H_p \mid^{p}_{G}$ is finite for some $p$, then an analogous argument to that in case (i) applies to show that $G$ is Hopfian. On the other hand, if $\min \mid H_p \mid^{p}_{G} \geq\omega$ for all $p$, then $H\leq T^1$ and hence $T^1\cong (T/H)^1 = T^1/H$. The hypothesis that $T^1$ is Hopfian then gives $H = \{0\}$ -- a contradiction. Consequently, in case (ii), the group $G$ is also Hopfian.
\end{proof}
			
As a consequence, we find:
		
\begin{corollary}\label{mix2}
If $G/t(G)$ is Hopfian and $t(G)$ is either separable, or $t(G)^1$ is Hopfian, then $G$ is relatively Hopfian if, and only if, it is Hopfian.
\end{corollary}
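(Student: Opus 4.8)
The plan is to handle the two implications separately, with essentially all of the substance already packaged inside Proposition~\ref{mix1}. The forward implication, that a Hopfian group is relatively Hopfian, needs no hypotheses whatsoever: it is immediate from the containment $\mathcal H \subseteq \mathcal{RH}$ recorded in Theorem~\ref{containments}. So the entire content lies in the reverse implication, where I assume $G$ is relatively Hopfian and aim to conclude that it is Hopfian.

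For that reverse implication, the key observation is that Proposition~\ref{mix1} already yields exactly the desired conclusion \emph{once $G$ is known to be reduced}, since its three hypotheses (relatively Hopfian, $G/t(G)$ Hopfian, and $t(G)$ separable or $t(G)^1$ Hopfian) are precisely those assumed here. Hence the only genuine task is to strip off the divisible part and reduce to the reduced case. Accordingly, I would write $G = R \oplus D$ with $R$ reduced and $D$ divisible, decompose $D = D_0 \oplus (\bigoplus_p D_p)$ into its torsion-free and primary components, and show that $D$ is in fact torsion-free of finite rank.

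The decisive step is the elimination of the torsion divisible part $\bigoplus_p D_p$. Because $G$ is relatively Hopfian it is directly finite, so Corollary~\ref{cor3} forces $D_0$ to have finite rank and each $D_p$ to have finite $p$-rank. To kill the $D_p$ outright I invoke the standing hypothesis: each divisible $p$-group $D_p$ satisfies $D_p = p^\omega D_p$, so it is a direct summand of $p^\omega t_p(G)$ and hence of $t(G)^1$; if $t(G)^1$ is Hopfian, then this summand $\bigoplus_p D_p$ is Hopfian by Proposition~\ref{prop3}, yet a nonzero divisible torsion group has $\qc$ as a summand and is never Hopfian, forcing $\bigoplus_p D_p = \{0\}$. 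In the alternative case that $t(G)$ is separable, the same conclusion is immediate since a separable group is reduced. Either way $D = D_0$ is torsion-free divisible of finite rank, hence Hopfian, and $t(G) = t(R)$.

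It then remains to verify that the hypotheses descend to $R$ and to reassemble. Since $G/t(G) \cong R/t(R) \oplus D_0$ is Hopfian, its summand $R/t(R)$ is Hopfian by Proposition~\ref{prop3}; and $t(R) = t(G)$ retains the property of being separable, respectively of having Hopfian first Ulm subgroup. Thus Proposition~\ref{mix1}, applied to the reduced relatively Hopfian group $R$, gives that $R$ is Hopfian. Finally, as $R$ is reduced and $D_0$ is divisible we have $\Hom(D_0, R) = \{0\}$, so Theorem~\ref{prop218} applies to $G = R \oplus D_0$ and shows $G$ is Hopfian precisely because both $R$ and $D_0$ are. The only obstacle, such as it is, is the bookkeeping that guarantees the divisible torsion part vanishes and that the three hypotheses transfer cleanly to $R$; the genuinely hard work has already been carried out inside Proposition~\ref{mix1}.
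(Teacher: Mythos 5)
Your proof is correct and follows essentially the route the paper intends: the corollary is stated there without proof as an immediate consequence of Proposition~\ref{mix1} (forward direction from Theorem~\ref{containments}, reverse direction from the proposition). The one place you go beyond the paper is the reduction to the reduced case: Proposition~\ref{mix1} assumes $G$ is reduced while Corollary~\ref{mix2} does not, and your elimination of the divisible torsion part via Corollary~\ref{cor3}, the separability or Hopficity of $t(G)^1$, and the reassembly through Theorem~\ref{prop218} is a legitimate and arguably necessary supplement to what the paper leaves implicit.
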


We can now establish the following result.
		
\begin{proposition} If $G$ is a reduced group such that $G/G^1$ is Hopfian and $t_p(G^1)=p^{\omega}t(G)$ is bounded for every $p$, then $G$ is relatively Hopfian.
\end{proposition}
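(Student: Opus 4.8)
The plan is to verify the criterion of Proposition~\ref{characterize}, arguing by contradiction: I would assume there is a prime $p$ and a surjection $\gamma\colon G\to G\oplus Z$ with $Z\cong \Z(p)$ or $Z\cong \Z(p^\infty)$, and then contradict the hypothesis that $G/G^1$ is Hopfian. The whole argument is built to run parallel to the proof of Proposition~\ref{ulmbounded}, with the fully invariant first Ulm subgroup $G^1$ taking over the role that $p^\omega G$ played there. Since $G^1$ is fully invariant, one has $\gamma(G^1)\subseteq (G\oplus Z)^1=G^1\oplus Z^1$, where $Z^1=\{0\}$ when $Z\cong\Z(p)$ and $Z^1=Z$ when $Z\cong\Z(p^\infty)$. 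The crux is to establish the proper containment $\gamma(G^1)\subsetneq G^1\oplus Z$. Once this is available, $\gamma$ induces a surjective composite $\theta\colon G/G^1\to (G\oplus Z)/\gamma(G^1)\to (G\oplus Z)/(G^1\oplus Z)\cong G/G^1$ which fails to be injective (any preimage under $\gamma$ of an element of $(G^1\oplus Z)\setminus\gamma(G^1)$ lies outside $G^1$ yet maps to $\overline 0$), contradicting the Hopficity of $G/G^1$.

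The case $Z\cong\Z(p)$ is immediate and uses no boundedness hypothesis: here $Z^1=\{0\}$, so $\gamma(G^1)\subseteq G^1\subsetneq G^1\oplus Z$ because $Z\neq\{0\}$, and the descent above applies at once. It is the case $Z\cong\Z(p^\infty)$ that carries the content and that I expect to be the main obstacle, since this is where the assumption that $t_p(G^1)=p^\omega t(G)$ is bounded for every $p$ must enter. Now $Z^1=Z$, so a priori only $\gamma(G^1)\subseteq G^1\oplus Z$ is known and equality must be excluded. The route I would take is to show that the $p$-primary torsion of $\gamma(G^1)$ stays bounded; this suffices, because $Z$ is an unbounded (indeed divisible) $p$-group and therefore cannot be contained in a group whose $p$-torsion is bounded, forcing the proper containment.

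To control that $p$-torsion I would first extract, from the Hopficity of $G/G^1$, that $\pi_G\circ\gamma$ induces a surjective, hence bijective, endomorphism of $G/G^1$, which yields $\ker\gamma\subseteq G^1$. If equality $\gamma(G^1)=G^1\oplus Z$ held, combining it with $\ker\gamma\subseteq G^1$ would give an isomorphism $G^1/\ker\gamma\cong G^1\oplus Z$; comparing $p$-primary torsion then pits a bounded group on the left, inherited from the bounded $t_p(G^1)$, against the unbounded $Z$ on the right, the desired contradiction. The delicate point, and the step I expect to demand the most care, is that an element of $G^1$ of infinite order could in principle map to $p$-torsion of arbitrarily large order, so one must first argue that $\ker\gamma$ is a torsion subgroup (equivalently, that $\gamma$ does not collapse torsion-free rank) before the $p$-torsion of the quotient $G^1/\ker\gamma$ may legitimately be declared bounded. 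This torsion-free rank phenomenon is precisely the mixed-group difficulty that has no analogue in the purely primary Proposition~\ref{ulmbounded}; once it is handled and the $p$-torsion of $\gamma(G^1)$ is seen to be bounded, the proper containment, and with it the contradiction against $G/G^1$ being Hopfian, follow, completing the proof.
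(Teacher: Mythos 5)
Your overall architecture is reasonable and runs parallel in spirit to the paper's argument: both funnel the problem through the Hopficity of $G/G^1$ via the full invariance of $G^1$, and in both the only genuinely hard case is $Z\cong\Z(p^\infty)$. Your $\Z(p)$ case is complete and correct, and your observation that Hopficity of $G/G^1$ forces $\Ker\,\gamma\subseteq G^1$ is right; in fact the same argument gives more, namely $\gamma^{-1}(Z)=\Ker(\pi_G\circ\gamma)\subseteq G^1$, which is the form of the statement the paper actually exploits (there it appears as $L\leq M=G^1$, obtained from $(G/G^1)^1=\{0\}$ together with Hopficity of $G/G^1$).

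The problem is that the gap you flag is never closed, and it is precisely where the hypothesis that $t_p(G^1)$ is bounded is supposed to do its work. Your entire $\Z(p^\infty)$ case rests on the claim that $t_p(\gamma(G^1))\cong t_p(G^1/\Ker\,\gamma)$ is bounded, which you propose to deduce from the boundedness of $t_p(G^1)$ \emph{after} first showing that $\Ker\,\gamma$ is torsion --- and that last step is only announced (``once it is handled''), not argued. It is not a routine verification: boundedness, or even vanishing, of the $p$-torsion of a group does not prevent one of its quotients from containing $\Z(p^\infty)$ --- the torsion-free group of $p$-adic integers surjects onto $\Z(p^\infty)$ --- so the hypothesis cannot simply be pushed down to $G^1/\Ker\,\gamma$; some additional structural input is indispensable, and nothing in your sketch supplies it. For comparison, the paper works with the quotient formulation $G/H=(K/H)\oplus(L/H)$, pins down $L\leq G^1$ and hence $L/H\leq(G/H)^1=G^1/H$, and then uses that $L/H$ is a \emph{pure} (indeed direct-summand) subgroup of $(G/H)^1$, hence divisible, before invoking the boundedness hypothesis to force $L=H$. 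The extra levers there --- the containment $\gamma^{-1}(Z)\subseteq G^1$ and the purity of the copy of $Z$ inside the first Ulm subgroup of the image --- are exactly what your outline lacks (and even the paper is quite terse at this final boundedness step, which confirms you have located the real crux). As written, the proposal does not constitute a proof of the $\Z(p^\infty)$ case.
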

		
\begin{proof} Assume, in a way of contradiction, that $G$ is {\it not} relatively Hopfian. Then, there is a subgroup $H$ of $G$ with $G/H = (K/H)\oplus(L/H)$, where $K/H\cong G$ and $L\neq H$. Now, $$G/L\cong K/H\cong G;$$ say $f: G\to G/L$ is an isomorphism. Thus, $f(G^1) = M/L$ for some $L\leq M\leq G$ so that $$f(G)/f(G^1) = (G/L)/(M/L)\cong G/M.$$
However, $f$ induces an isomorphism $$G/G^1\cong f(G)/f(G^1)$$ whence $G/M\cong G/G^1$. Now, if $\alpha$ is the canonical projection $G\twoheadrightarrow G/M$, then $$\alpha(G^1)\leq (\alpha(G))^1 = (G/M)^1 = \{\overline{0}\},$$
and hence $G^1\leq \Ker\,\alpha = M$.
			
Furthermore, $$G/G^1\cong G/M\cong (G/G^1)/(M/G^1)$$ and since, by hypothesis, $G/G^1$ is Hopfian, we must have that $M/G^1$ vanishes, i.e., $M = G^1$. But, it is obvious that $L/H$ is a pure subgroup of $G/H$, and as $H\leq L\leq M = G^1$, we obtain $$L/H\leq G^1/H = (G/H)^1.$$ Thus, as is well-known (see, e.g., [14, Exercise 6(c)]), $L/H$ is a divisible subgroup of $G/H$. This, however, is impossible unless $L/H$ is trivial, because $L/H\leq G^1/H$ is then also bounded. The contradiction that $L = H$ now guarantees the wanted result.
\end{proof}
		
Having at hand all the necessary ingredients, we are now ready to state and prove the following assertion.
		
\begin{proposition}
(i) If $G$ is a reduced relatively Hopfian group such that $G/t(G)$ is Hopfian, and if $G\cong G/H$ with $H\neq \{0\}$, then $H\leq t(G)$ and $t_p(H)$ is a bounded subgroup of $p^{\omega}(t_p(G))$ for any prime $p$.

\medskip
			
(ii) If $t_p(G)$ is semi-standard for every prime $p$, and if, for each non-trivial subgroup $H$ of $G$ satisfying $G/H\cong G$, we have $H\leq t(G)$ and $t_p(H)$ is a bounded subgroup of $p^{\omega}(t_p(G))$, then $G$ is relatively Hopfian.
\end{proposition}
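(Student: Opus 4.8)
The plan is to treat this proposition as the mixed-group counterpart of Theorem~\ref{bigresult}, reducing everything to the primary components $T_p:=t_p(G)$ and then invoking the two height lemmas, Lemma~\ref{hzero} and Lemma~\ref{mequalzero}, together with Proposition~\ref{characterize} for the converse. Throughout, the decisive structural fact is that each $T_p$ is pure in $G$, so that $p$-heights of torsion elements may be transferred freely between $G$, $T:=t(G)$ and $T_p$; in particular $p^\omega T_p=T_p\cap p^\omega G\leq p^\omega G$, and for $x=\sum_q x_q\in T$ one has $|x|^p_G=|x_p|^p_{T_p}$ since the components $x_q$ with $q\neq p$ are $p$-divisible.

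For part (i), I would first show $H\leq t(G)$ exactly as in the opening paragraph of the proof of Proposition~\ref{mix1}: choosing $F\leq G$ with $F/H=T(G/H)$, the hypothesis that $G/T$ is Hopfian forces $F=T=T+H$, whence $H\leq T$. Consequently $H=\bigoplus_p H_p$ with $H_p=t_p(H)\leq T_p$, and the isomorphism $G\cong G/H$ restricts to $t(G)\cong T/H$ and hence, on $p$-components, to $T_p/H_p\cong T_p$. Next I would establish boundedness of each $H_p$: if some $H_p$ were unbounded it would have a proper basic subgroup $B_p$, so that with $B:=B_p\oplus\bigoplus_{q\neq p}H_q$ the quotient $H/B\cong H_p/B_p$ is non-zero divisible, hence a direct summand of $G/B$; writing $G/B=(H/B)\oplus(K/B)$ gives $K/B\cong G/H\cong G$ and thus a surjection $G\to G\oplus(H/B)$ with $H/B\neq\{0\}$, contradicting relative Hopficity. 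Finally, to see $H_p\leq p^\omega T_p$, recall that a reduced relatively Hopfian group has each $T_p$ semi-standard; by the height transfer above, $\min|H|^p_G=\min|H_p|^p_{T_p}$. If this common value were a finite integer, Lemma~\ref{mequalzero} applied to the semi-standard $p$-group $T_p$ with $T_p/H_p\cong T_p$ would force it to be $0$, whence $\min|H|^p_G=0$ and Lemma~\ref{hzero} would contradict relative Hopficity of $G$. Hence $\min|H_p|^p_{T_p}\geq\omega$, i.e.\ $H_p\leq p^\omega T_p$.

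For part (ii), I would argue by contradiction using Proposition~\ref{characterize}: if $G$ is not relatively Hopfian there is a prime $p$ and a surjection $\gamma:G\to G\oplus Z$ with $Z\cong\Z(p)$ or $Z\cong\Z(p^\infty)$. Setting $H:=\gamma^{-1}(\{0\}\oplus Z)$ gives $G/H\cong G$ and $H\neq\{0\}$, so the standing hypothesis applies: $H\leq t(G)$ and $H_p$ is a bounded subgroup of $p^\omega T_p$. Since $\gamma(H)=\{0\}\oplus Z$ is $p$-primary while each $H_q$ with $q\neq p$ is a $q$-group, necessarily $\gamma(H_q)=\{0\}$ and $\gamma(H_p)=Z$. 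Boundedness of $H_p$ then forces $Z$ bounded, so $Z\cong\Z(p)$ and $p^\omega Z=\{0\}$. On the other hand, $H_p\leq p^\omega T_p\leq p^\omega G$ by purity of $T_p$, so $\gamma(H_p)\subseteq p^\omega(G\oplus Z)=p^\omega G\oplus\{0\}$; this contradicts $\gamma(H_p)=\{0\}\oplus Z\neq\{0\}$. Therefore $G$ is relatively Hopfian.

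The main obstacle is that, in a genuinely mixed group, $T_p$ is typically \emph{not} a direct summand of $G$, so one cannot simply quote Theorem~\ref{bigresult} for $T_p$; instead one must apply Lemmas~\ref{hzero} and \ref{mequalzero} to $G$ itself, which demands the careful transfer of $p$-heights between $G$, $T$ and $T_p$ afforded by purity, and one must first extract $H\leq t(G)$ from the Hopficity of $G/t(G)$. The semi-standard hypothesis is precisely what makes Lemma~\ref{mequalzero} applicable to $T_p$ in part (i); in part (ii) the contradiction is driven instead by the boundedness of $H_p$ together with the containment $H_p\leq p^\omega G$, with the semi-standard assumption present to keep the statement a faithful converse of (i).
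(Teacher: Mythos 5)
Your proof is correct. Part (i) follows the paper's own argument essentially step for step: you extract $H\leq t(G)$ from the Hopficity of $G/t(G)$ exactly as in Proposition~\ref{mix1}, use the proper-basic-subgroup/divisible-quotient trick for boundedness, and combine Lemmas~\ref{mequalzero} and \ref{hzero} to push $t_p(H)$ into $p^\omega(t_p(G))$; your explicit transfer of $p$-heights between $G$, $t(G)$ and $t_p(G)$ is if anything more careful than the paper's, which invokes Lemma~\ref{mequalzero} without spelling out that it is being applied to the $p$-group $t_p(G)$ via $t_p(G)/t_p(H)\cong t_p(G)$. Part (ii) is where you genuinely diverge. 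The paper writes $G/M=(K/M)\oplus(L/M)$ and runs a counting argument: using that $p^\omega t_p$ is a radical, Kulikov's theorem, and additivity of the Ulm invariants over the decomposition, it obtains $f_n(t_p(G))=f_n(t_p(G))+f_n(t_p(L)/t_p(M))$ and then uses the \emph{finiteness} of these invariants (the semi-standard hypothesis) to cancel and force $f_n(t_p(L)/t_p(M))=0$, hence $L=M$ since $t_p(L)/t_p(M)$ is bounded. You instead transplant the sufficiency direction of Theorem~\ref{bigresult}: take $\gamma\colon G\to G\oplus Z$ from Proposition~\ref{characterize}, set $H=\gamma^{-1}(Z)$, note that the $q$-components of $H$ for $q\neq p$ must die in the $p$-group $Z$ so that $\gamma(t_p(H))=Z$, and then play the boundedness of $t_p(H)$ (which forces $Z\cong\Z(p)$) against $t_p(H)\leq p^\omega(t_p(G))\leq p^\omega G$ (which forces $\gamma(t_p(H))\leq p^\omega G\oplus\{0\}$), a contradiction. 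Your route is shorter, avoids Kulikov's theorem and Ulm-invariant arithmetic entirely, and, as you observe, never uses the semi-standard hypothesis, so it in fact proves a marginally stronger statement; the paper's cancellation step needs that hypothesis essentially. Both arguments are valid.
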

		
\begin{proof} (i) Observe firstly that, it follows from an identical argument to that used in the proof Proposition~\ref{mix1} that, $H \leq t(G)$. Notice that the statement of part (i) is {\it not} vacuous: the hypothesis that $H\neq \{0\}$ is a guarantor that $G$ is {\it not} Hopfian, so that $t(G)^1$ is non-trivial as follows from Corollary~\ref{mix2}. Since $G$ is relatively Hopfian, $t_p(G)$ is, as observed previously, semi-standard for all primes $p$. Now, if it happens that $\min_G t_p(H)$ is finite for some $p$, then it automatically follows from Lemma~\ref{mequalzero} that $m = 0$. A further application of Lemma~\ref{hzero} then leads to a contradiction to the fact that $G$ is, by hypothesis, relatively Hopfian. So, we may assume that $\min_G t_p(H) \geq\omega$ and also that $t_p(H) \leq p^{\omega}G$ for all primes $p$.

\medskip
			
We now demonstrate that $t_p(H)$ is bounded. In fact, if $H$ is {\it not} bounded, then it has a basic subgroup, say $B$, with $H\neq B$. So, $H/B$ is divisible and thus we write $G/B = (K/B)\oplus(H/B)$ with $H/B\neq\{\overline{0}\}$ for some $K/B\cong G$; clearly, such a decomposition violates our hypothesis that $G$ is relatively Hopfian groups. Finally, in conclusion, $t_p(H)$ is bounded and $t_p(H) \leq  p^{\omega}(t_p(G)$, as required.

\medskip
			
(ii) Suppose, in a way of contradiction, that $G$ is {\it not} relatively Hopfian, so that there is a subgroup $M$ of $G$ with $G/M = (K/M)\oplus(L/M)$, where $K/M\cong G$ and $L\neq M$. Furthermore, $G/L\cong K/M\cong G$, so that, by hypothesis, $L\leq t_p(G)$ and $t_p(L)$ is a bounded subgroup of $p^{\omega}(t_p(G))$ for any prime $p$. So, $t_p(M)$ is also a bounded subgroup of $p^{\omega}(t_p(G))$.

\medskip
				
Now, the composition $p^\o t_p$ is itself a radical, being the composition of two radicals and since $t_p(M) \leq p^\o(t_p(G))$, we have by the standard properties of radicals that $$p^\o(t_p(G/M)) = p^\o(t_p(G))/t_p(M).$$ Hence,
			$$t_p(G)/t_p(M)/p^{\omega}(t_p(G)/t_p(M))=
			t_p(G)/t_p(M)/p^{\omega}(t_p(G))/t_p(M) \cong t_p(G)/p^{\omega}(t_p(G)).$$
			
Denote $G/M$ by $\overline{G}$ and note that $$t_p(G/M) = t_p(t(G/M)) = t_p((t(G)/M)) = t_p(G)/t_p(M).$$ It is then easy to check that $$t_p(\overline{G})/p^{\omega}(t_p(\overline{G}))\cong
			t_p(G)/p^{\omega}(t_p(G)),$$ and hence the corresponding Ulm invariants $f_n$ $(n<\omega)$ are equal. But, by a well-known result due to Kulikov (cf. \cite{F1,F}), we conclude that
			$$f_n(t_p(\overline{G})/p^{\omega}(t_p(\overline{G})))=
			f_n(t_p(\overline{G}))$$
and
			$$f_n(t_p(G)/p^{\omega}(t_p(G)))= f_n(t_p(G)),$$

\medskip

\noindent whence one detects that $f_n(t_p(\overline{G})) = f_n(t_p(G))$ for all finite $n$.

\medskip
			
Next, we then have
			$$f_n(t_p(\overline{G}))=f_n(t_p(K)/t_p(M)) + f_n(t_p(L)/t_p(M)) =
			f_n(t_p(G)) + f_n(t_p(L)/t_p(M)),$$

\medskip

\noindent where we have utilized the facts that $K/M\cong G$ and $M\leq L\leq t(G)$ to establish these two equalities.

\medskip
			
However, by hypothesis, the Ulm invariants are finite, so we can conclude that $f_n(t_p(L)/t_p(M)) = 0$ for all finite $n$. Now, seeing that the factor-group $t_p(L)/t_p(M)$ is bounded, this leads to the contradiction that $L/M$ is trivial. This establishes part (ii), thus completing the proof.
\end{proof}
		
\section{Open Problems}
		
We now end the article with two queries which may be rather difficult to resolve (see \cite{CD} as well).
		
\begin{problem}\label{p3} Describe the structure of those groups $G$ such that, for any subgroup $H$ of $G$, whenever $G$ is isomorphic to a {\bf proper} direct summand of the quotient-group $G/H$, then $H=\{0\}$.
\end{problem}
		
We assert that every Hopfian group possesses that property: in fact, suppose $G$ is a Hopfian group and let $\phi:G\to K$ be an isomorphism, $N$ a subgroup of $G$ and $G/N=K\oplus L$ a proper decomposition of the quotient-group $G/N$. If $\pi:G/N \to K$ is the canonical projection and $\gamma:G\to G/N$ is the canonical epimorphism, then the map $$\phi^{-1} \circ \pi \circ \gamma:G\to G$$ is the composition of three surjections, so it is surjective too. Since $G$ is Hopfian, it follows that this composition is injective, so that the map $\gamma$, which is the first of the existing maps from the right, must be injective as well. This fact automatically implies that $N=\{0\}$, and thus $G$ really possesses the property defined above, as asserted.
		
\medskip
		
We conjecture that this is just asking for a new description of the class of relatively Hopfian groups.
		
\begin{problem}\label{p4} Characterize up to an isomorphism those groups $G$ such that, for any subgroup $H$ of $G$, the isomorphism $G/H \cong G$ implies that $H$ is a {\bf proper} direct summand of $G$.
\end{problem}
		
Thereby, the trivial cases of $H=\{0\}$ and $H=G$ are obviously excluded. Besides, it can be elementarily seen by a direct verification that {\it all free groups of infinite rank possess this property}. We, thus, observe that it does specify a new class of groups different from the generalized Bassian ones as introduced in \cite{CDG2} (see also \cite{CDG1} as well as \cite{DG} and \cite{DK}, respectively, where it was independently proven that all the generalized Bassian group possess {\it finite} torsion-free rank).
		
\medskip
		
As a parallel, a good name for the class might be {\it generalized Hopfian}, because, certainly, under the presence of this definition, a generalized Bassian group will always be generalized Hopfian, but {\it not} the other way around.
		
\medskip
		
\noindent {\bf Funding:} The work of the first-named author, A.R. Chekhlov, is supported by the Ministry of Science and Higher Education of Russia (agreement No. 075-02-2024-1437). The work of the second-named author, P.V. Danchev, is partially supported by the Junta de Andaluc\'ia under Grant FQM 264.

\vskip2.0pc


\begin{thebibliography}{99}
		
\bibitem{AGW}
U. Albrecht, H.P. G\"others and W. Wickless, {\it The flat dimension of mixed Abelian groups as E-modules}, Rocky Mount. J. Math. {\bf 25} (1995), 569--590.
		
\bibitem{B}
R. Baer, {\it Groups without proper isomorphic quotient groups}, Bull. Amer. Nath. Soc. {\bf 50} (1944), 267--278.
		
\bibitem{BP}
R.A. Beaumont and R.S. Pierce, {\it Isomorphic direct summands of Abelian groups}, Math. Ann. {\bf 153} (1964), 21--37.
		
\bibitem{CD}
A.R. Chekhlov and P.V. Danchev, {\it A generalization of Hopfian Abelian groups}, Ann. Univ. Sci. Budapest -- Math. {\bf 60} (2017), 93--109.
		
\bibitem{CDG1}
A.R. Chekhlov, P.V. Danchev and B. Goldsmith, {\it On the Bassian property for Abelian groups}, Arch. Math. (Basel) {\bf 117}(6) (2021), 593--600.
		
\bibitem{CDG2}
A.R. Chekhlov, P.V. Danchev and B. Goldsmith, {\it On the generalized Bassian property for Abelian groups}, Acta Math. Hungar. {\bf 168} (2022), 186--201.
		
\bibitem{Ch}
P.M. Cohn, {\it Some remarks on the invariant basis property}, Topology {\bf 5} (1966), 215--228.
		
\bibitem{C1}
A.L.S. Corner, {\it On a conjecture of Pierce concerning direct decompositions of Abelian groups}: in Proc. Colloq. Abelian Groups (Tihany) (1964), 43--48.
		
\bibitem{C2}
A.L.S. Corner, {\it Three examples of Hopficity in torsion-free Abelian groups}, Acta Math. Acad. Sci. Hungar. {\bf 16} (1965), 303--310.
		
\bibitem {C3}
A. Corner, {\it On endomophism rings of primary Abelian groups} Quart. J. Math. Oxford Ser. (2) {\bf 20} (1969), 277--296.
		
\bibitem{DG}
P. Danchev and B. Goldsmith, {\it Super Bassian and nearly generalized Bassian Abelian groups}, Internat. J. Algebra and Comput. {\bf 34}(5) (2024).
		
\bibitem{DK}
P.V. Danchev and P.W. Keef, {\it Generalized Bassian and other mixed Abelian groups with bounded $p$-torsion}, J. Algebra {\bf 661} (2025).
		
\bibitem{DGSZ}
D. Dikranjan, B. Goldsmith, L. Salce and P. Zanardo, {\it Algebraic entropy for Abelian groups}, Trans. Am. Math. Soc. {\bf 361} (2009), 3401--3434.
		
\bibitem{F1}
L. Fuchs, Infinite Abelian Groups, vols. {\bf I} and {\bf II}, Acad. Press, New York and London, 1970 and 1973.
		
\bibitem{F}
L. Fuchs, Abelian Groups, Springer, Switzerland (2015).
		
\bibitem{GG}
B. Goldsmith and K. Gong, {\it Algebraic entropies, Hopficity and co-Hopficity of direct sums of Abelian Groups}, Topol. Algebra Appl. {\bf 3} (2015), 75-- 85.
		
\bibitem{GV}
B. Goldsmith and P. V\'amos, {\it The Hopfian exponent of an Abelian group}, Period. Math. Hungar. {\bf 69} (2014), 21--31.
		
\bibitem{H1}
R. Hirshon, {\it Some theorems on Hopficity}, Trans. Am. Math. Soc. {\bf 141} (1969), 229--244.
		
\bibitem{H2}
R. Hirshon, {\it On Hopfian groups}, Pac. J. Math. {\bf 32}(3) (1970), 753--766.
		
\bibitem{K}
I. Kaplansky, Infinite Abelian Groups, Univ. Michigan Press, Ann Arbor, Michigan (1954 and 1969).
		
\bibitem{Ka}
E.V. Kaigorodov, {\it On two classes of Hopfian Abelian groups}, Vest. Tomsk Gos. University (Math. and Mech.) {\bf 22} (2013), 22--32.
		
\bibitem{Ke}
P.W. Keef, {\it On height sequences in primary Abelian groups}, Commun. Algebra {\bf 51} (2023), 4746--4753.
		
		
\bibitem{KTT}
P.A. Krylov, A.A. Tuganbaev, A.V. Tsarev, {\it sp-groups and their endomorphism rings}, J. Math. Sci. (N. Y.) {\bf 256}(3) (2021), 299--340.
		
\bibitem{P} R.S. Pierce, {\it Homomorphisms of primary Abelian groups}, Topics in Abelian Groups, Scott, Foresman and C. (1963), pp. 225-310.
		
\end{thebibliography}
\end{document}